\newcommand{\x}{ {\bf x} }
\newcommand{\y}{ {\bf y} }
\newcommand{\R}{ \mathbb{R} }
\newcommand{\Q}{ \mathbb{Q} }
\newcommand{\F}{ \mathbb{F} }
\newcommand{\Z}{ \mathbb{Z} }
\newcommand{\Alg}{ \mathcal{A} }
\newcommand{\Idem}{ \mathcal{I} }
\newcommand{\Ainfty}{ \mathcal{A}_\infty }
\newcommand{\CFD}{ \widehat{\mathit{CFD}} }
\newcommand{\CFA}{ \widehat{\mathit{CFA}} }
\newcommand{\HFhat}{ \widehat{\mathit{HF}} }
\newcommand{\CFhat}{ \widehat{\mathit{CF}} }
\newcommand{\HeegDiag}{ \mathcal{H} }
\DeclareMathOperator{\id}{id}
\newtheorem{thm}{Theorem}
\newtheorem{prop}{Proposition}
\newtheorem{lem}[prop]{Lemma}
\newtheorem{question}{Question}
\newtheorem{conj}{Conjecture}
\theoremstyle{definition}
\newtheorem{definition}[prop]{Definition}
\theoremstyle{remark}
\newtheorem{remark}[prop]{Remark}
\title[Splicing Integer Framed Knot Complements]{Splicing Integer Framed Knot Complements and Bordereed {H}eegaard {F}loer Homology}
\author{Jonathan Hanselman  }
\begin{document}
\maketitle

\begin{abstract}
We consider the following question: when is the manifold obtained by gluing together two knot complements an $L$-space? Hedden and Levine proved that splicing {0-framed} complements of nontrivial knots never produces an $L$-space. We extend this result to allow for arbitrary integer framings. We find that splicing two integer framed nontrivial knot complements only produces an $L$-space if both knots are $L$-space knots and the framings lie in an appropriate range. The proof involves a careful analysis of the bordered Heegaard Floer invariants of each knot complement.
\end{abstract}

\section{Introduction}

For a rational homology 3-sphere $Y$, the rank of $\HFhat(Y)$ is bounded below by the order of $H_1(Y, \Z)$; if the rank of $\HFhat(Y)$ is equal the order of $H_1(Y, \Z)$, $Y$ is called an $L$-space. Examples of $L$-spaces include manifolds with finite fundamental group \cite{OzSz:lens_surgeries} and branched double covers of alternating links \cite{OzSz:branched_double_covers}. There is significant interest in determining exactly which 3-manifolds are $L$-spaces \cite{BoyerClay, BoyerGordonWatson, splicing, LiscaStipsicz}.

In \cite{splicing}, Hedden and Levine use a cut and paste argument to answer this question for homology spheres which are obtained by splicing together two 0-framed knot complements. Given a knot $K$ in a 3-manifold $Y$, let $X_K$ denote the manifold with boundary $Y\backslash K$ along with the curves $\mu_K$ and $\lambda_K$ in $\partial X_K$ given by the meridian and Seifert longitude of $K$, respectively. $X_K$ is the 0-framed knot complement of $K$. Given two knots $K_1 \subset Y_1$ and $K_2 \subset Y_2$, let $Y(K_1, K_2)$ denote the 3-manifold obtained by gluing $X_{K_1}$ to $X_{K_2}$ via a map $\phi:\partial X_{K_1}\to \partial X_{K_2}$taking $\mu_{K_1}$ to $\lambda_{K_2}$ and $\lambda_{K_1}$ to $\mu_{K_2}$. We refer to gluing knot complements in this way as \emph{splicing}. The main result of \cite{splicing} can be stated as follows:

\begin{thm}
\label{thm:HL_splicing}
For any homology sphere $L$-spaces $Y_1$ and $Y_2$ and any nontrivial knots $K_1 \subset Y_1$ and $K_2 \subset Y_2$, the manifold $Y(K_1, K_2)$ obtained by splicing $X_{K_1}$ and $X_{K_2}$ is not an $L$-space.
\end{thm}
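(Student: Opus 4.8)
The plan is to compute $\HFhat(Y(K_1,K_2))$ with bordered Heegaard Floer homology and bound its rank from below. First I would cut $Y(K_1,K_2)$ along the splicing torus into $X_{K_1}$ and $X_{K_2}$ and apply the Lipshitz--Ozsv\'ath--Thurston pairing theorem: $\HFhat(Y(K_1,K_2))$ is the homology of a box tensor product of the bordered invariants of the two pieces, say $\CFA(X_{K_1})\boxtimes\CFD(X_{K_2})$, with the splicing diffeomorphism $\phi$ (which swaps $\mu_{K_i}$ and $\lambda_{K_i}$) recorded by inserting the type-$DA$ bimodule of the corresponding change of boundary parametrization --- equivalently, by computing one of the two factors with respect to the framing that $\phi$ produces from the Seifert framing. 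Since the $H_1$ of a splice of two homology-sphere knot complements is trivial (a direct Mayer--Vietoris computation), and since the rank of $\HFhat$ of a homology sphere is odd, $Y(K_1,K_2)$ is an $L$-space if and only if this homology has rank one; it therefore suffices to exhibit more than one linearly independent class, and I expect the natural count to give at least three.

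Second, I would feed in the structure theorem for the type-$D$ module of a knot complement with the Seifert framing: $\CFD(X_K)$ is built from a reduced model of $\mathit{CFK}^\infty(Y,K)$, with $\iota_0$-generators in bijection with a basis of $\widehat{\mathit{HFK}}(Y,K)$, $\iota_1$-generators arranged in ``vertical'' and ``horizontal'' chains encoding the two filtered differentials, and one ``unstable'' chain of coefficient maps joining the distinguished vertical generator to the distinguished horizontal generator, whose length is controlled by $\tau(Y,K)$. Two features of the present situation are what make a clean count possible: because each $Y_i$ is a homology-sphere $L$-space, $\CFhat(Y_i)$ has rank-one homology, so the reduced model may be taken with a single ``outermost'' $\iota_0$-generator $x_0$ carrying $\HFhat(Y_i)$; and because each $K_i$ is nontrivial, $\CFD(X_{K_i})$ is \emph{not} the type-$D$ module of the $0$-framed solid torus (indeed the reduced complex is nontrivial and $\widehat{\mathit{HFK}}(Y_i,K_i)$ has rank at least three), so genuine vertical/horizontal or unstable structure is present.

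Third, I would analyze the box tensor product, using that the $\mu\leftrightarrow\lambda$ swap exchanges the vertical and horizontal roles of the two sides --- so the vertical chains of $X_{K_1}$ pair against the horizontal chains of $X_{K_2}$, and the two unstable chains are crossed against one another. Keeping track of which sequences of Reeb chords can actually concatenate, I would isolate a subcomplex on which the tensored differential vanishes: the class supported at $x_0^{(1)}\otimes x_0^{(2)}$, together with classes carried by the far ends of the two unstable chains, which cannot be cancelled because the relevant idempotents and Reeb-chord labels forbid any compensating differential. This produces the required independent classes, whence $\operatorname{rank}\HFhat(Y(K_1,K_2))\ge 3>1$ and $Y(K_1,K_2)$ is not an $L$-space.

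The hard part will be the third step: making the cancellation bookkeeping uniform over an arbitrary complex $\mathit{CFK}^\infty(Y_i,K_i)$ rather than a convenient model, and in a way that will later survive replacing the Seifert framing by an arbitrary integer framing (which only lengthens or re-types the unstable chains). The cleanest way I know to organize this is through the immersed-curve description of $\CFD(X_K)$: it corresponds to an immersed multicurve in the punctured torus, homologous to the longitude, which for a nontrivial knot is not the distinguished straight line representing the solid torus, and $\operatorname{rank}\HFhat$ of the splice is the minimal geometric intersection number of the curve of $X_{K_1}$ with the $\phi$-image of that of $X_{K_2}$. The theorem then reduces to the geometric statement that two such curves, after the coordinate swap $\phi$, always meet at least three times --- forced by their homology classes together with a bigon-elimination argument.
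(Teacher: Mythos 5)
Your overall route is the same one the paper attributes to Hedden and Levine and then redevelops for general framings: cut along the splicing torus, apply the pairing theorem, build $\CFD(X_{K_i})$ from $CFK^-$ with its vertical, horizontal, and unstable chains, exhibit generators of the box tensor product that survive in homology, and conclude because an $L$-space homology sphere has $\HFhat$ of rank one (rank $\ge 2$ already suffices; the push to rank three is unnecessary). The genuine gap is in your third step, which is where all the content of the theorem lives. You assert that $x_0^{(1)}\otimes x_0^{(2)}$ together with classes ``carried by the far ends of the two unstable chains'' survive because ``idempotents and Reeb-chord labels forbid any compensating differential,'' but no argument is given, and the candidates are wrong as stated. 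For the $0$-framing the unstable chain has length $|2\tau(K)|$; a nontrivial knot can have $\tau(K)=0$ (e.g.\ the figure-eight), in which case the unstable chain is a single $D_{12}$ arrow with no $\iota_1$ generators, so the ``far ends'' do not exist. More seriously, the generators $\xi_0$ and $\eta_0$ at the ends of the unstable chain typically also lie on horizontal or vertical chains and hence have incoming and outgoing coefficient maps, and whether a tensor generator is a cycle and not a boundary depends on how these interact with the $\Ainfty$ operations on the other factor. The actual work---Propositions 3.5 and 3.6 of Hedden--Levine, mirrored in this paper by the durable-generator analysis (Lemmas \ref{lem:no_left_down} and \ref{lem:no_up_right}, proved by applying $\partial^2=0$ to the simplified bases of Proposition \ref{prop:nice_basis})---is precisely to locate generators, in general \emph{not} on the unstable chain, for which every incoming operation and every dangerous outgoing chain of coefficient maps vanishes, so that Proposition \ref{prop:tensoring_durable_generators} applies. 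Without an argument of this kind the rank bound is unsubstantiated.

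Your immersed-curve fallback is a genuinely different and viable route, but as written it simply restates the theorem: the claim that the two curves, after the coordinate swap, must intersect at least three times is exactly what would have to be proven, and the identification of $\operatorname{rank}\HFhat$ of the splice with a minimal geometric intersection number is itself a substantial theorem that this paper neither proves nor uses. Note also that the paper's own machinery recovers the $0$-framed case as a special case of Theorem \ref{main_theorem}: durable generators exist for every non-$L$-space knot and for every $L$-space knot with $n<2\tau(K)$, $\tau(K)>0$ (which covers $n=0$ after mirroring), and Proposition \ref{prop:durable_tensor_durable} then concludes via the $\Z_2$ grading; in the $0$-framed homology-sphere setting the simpler rank argument you invoke is enough, but only after the surviving generators have genuinely been produced.
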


The proof is based on understanding the bordered Heegaard Floer invariants of the two pieces $X_{K_1}$ and $X_{K_2}$. The existence of certain special generators in the bordered invariants implies the existence of generators in $\HFhat(Y(K_1, K_2))$. In this way, it can be shown that the rank of $\HFhat(Y(K_1, K_2))$ is at least two. The result follows using the fact that splicing 0-framed knot complements produces an integral homology sphere, so if $Y(K_1, K_2)$ is an $L$-space then $\text{rk}(\HFhat(Y)) = 1$.

\bigbreak

In this paper, we extend Theorem \ref{thm:HL_splicing} by considering splicing knot complements with non-zero framings. That is, we allow the Seifert longitude $\lambda_K$ in $\partial X_K$ to be replaced by any integer framed longitude. For a knot $K \subset Y$, let $X_K^{[n]}$ denote $Y\backslash K$, along with the curves $\mu_K$ and $\lambda_K^{[n]} = \lambda_K + n\mu_K$ in $\partial X_K$. Given two knots $K_1 \subset Y_1$ and $K_2 \subset Y_2$, define $Y(K_1^{[n_1]}, K_2^{[n_2]})$ to be the 3-manifold obtained by gluing $X_{K_1}^{[n_1]}$ to $X_{K_2}^{[n_2]}$ via a gluing map taking $\mu_{K_1}$ to $\lambda_{K_2}^{[n_2]}$ and $\lambda_{K_1}^{[n_1]}$ to $\mu_{K_2}$. The main result is the following:
\begin{thm}
\label{main_theorem}
For nontrivial knots $K_1$ and $K_2$ in $L$-space integral homology spheres, the manifold $Y(K_1^{[n_1]}, K_2^{[n_2]})$ described above in an $L$-space if and only if all of the following hold:
\begin{itemize}
\item $K_1$ and $K_2$ are $L$-space knots;
\item $n_i \ge 2 \tau(K_i)$ if $\tau(K_i) > 0$ and $n_i \le 2 \tau(K_i) $ if $\tau(K_i) < 0$;
\item if $\tau(K_1)$ and $\tau(K_2)$ have the same sign, then $n_1 \ne 2 \tau(K_1)$ or $n_2 \ne 2 \tau(K_2)$.
\end{itemize}
\end{thm}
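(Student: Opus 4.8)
\emph{Proof strategy.} The plan is to reduce the question to a statement about $L$-space Dehn fillings of the two pieces. First, a Mayer--Vietoris computation shows $H_1\big(Y(K_1^{[n_1]},K_2^{[n_2]})\big)\cong\Z/(n_1n_2-1)\Z$, so $Y(K_1^{[n_1]},K_2^{[n_2]})$ is an $L$-space exactly when the rank of $\HFhat$ equals $|n_1n_2-1|$. Passing to the bordered invariants --- equivalently, the immersed-curve invariants --- of $X_{K_1}$ and $X_{K_2}$, this rank is the minimal geometric intersection number of the two curves under the gluing map $\phi$. I would package this via the $L$-space gluing theorem of Hanselman--Rasmussen--Watson: since a nontrivial knot complement in a homology sphere is irreducible, boundary-irreducible, and not a solid torus, $Y(K_1^{[n_1]},K_2^{[n_2]})$ is an $L$-space if and only if both $X_{K_1}$ and $X_{K_2}$ are Floer simple and $\phi\big(\mathcal{L}^{\circ}(X_{K_1})\big)\cup\mathcal{L}^{\circ}(X_{K_2})$ is the full set of slopes on $\partial X_{K_2}$, where $\mathcal{L}^\circ(X_K)$ denotes the interior of the set of $L$-space filling slopes.

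The two inputs are then: (i) $X_{K_i}$ is Floer simple if and only if $K_i$ is an $L$-space knot --- otherwise its curve invariant carries extra components or a nontrivial local system --- which gives the first bullet; and (ii) for an $L$-space knot $K_i$ with $\tau(K_i)>0$, so $g(K_i)=\tau(K_i)$, the Ozsv\'ath--Szab\'o surgery formula (in the homology-sphere setting) gives that the $L$-space filling slopes of $X_{K_i}$, in the basis $(\mu_{K_i},\lambda_{K_i})$, form the closed arc from $2\tau(K_i)-1$ to $\infty$, so $\mathcal{L}^{\circ}(X_{K_i})=\{\,p/q>2\tau(K_i)-1\,\}$, with the mirror statement when $\tau(K_i)<0$; note $\tau(K_i)\neq0$ for a nontrivial $L$-space knot.

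Next I would make the gluing explicit on slopes. In the bases $(\mu_{K_i},\lambda_{K_i})$ the map $\phi$ is given by $\left(\begin{smallmatrix}n_2 & 1-n_1n_2\\ 1 & -n_1\end{smallmatrix}\right)$, of determinant $-1$, hence acts on slopes by $t\mapsto(n_2t+1-n_1n_2)/(t-n_1)$; in particular $\phi(\infty)=n_2$ and $\phi^{-1}(\infty)=n_1$. The covering condition $\phi(\mathcal{L}^{\circ}(X_{K_1}))\cup\mathcal{L}^{\circ}(X_{K_2})=\Q\cup\{\infty\}$ is equivalent to the inclusion of a closed arc inside an open arc, $\phi^{-1}\big((\Q\cup\{\infty\})\setminus\mathcal{L}^{\circ}(X_{K_2})\big)\subseteq\mathcal{L}^{\circ}(X_{K_1})$. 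Its endpoint $\phi^{-1}(\infty)=n_1$ must lie in the open interval $\mathcal{L}^{\circ}(X_{K_1})$, forcing $n_1>2\tau(K_1)-1$ and hence $n_1\ge 2\tau(K_1)$ by integrality; symmetrically (from $\phi(\infty)=n_2\in\mathcal{L}^\circ(X_{K_2})$) one needs $n_2\ge 2\tau(K_2)$. Together with the mirror computations when a $\tau(K_i)$ is negative, this is the second bullet. A short computation shows $\phi(2\tau(K_1)-1)=n_2-1$ when $n_1=2\tau(K_1)$, so in this boundary case the inclusion additionally fails precisely when $n_2-1\le 2\tau(K_2)-1$, i.e.\ $n_2=2\tau(K_2)$; when $\tau(K_1)$ and $\tau(K_2)$ have opposite signs, the corresponding boundary point lands in the interior of $\mathcal{L}^{\circ}(X_{K_1})$ and no obstruction appears --- this is the third bullet. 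Finally, using that fractional linear maps carry arcs to arcs, I would verify that the inclusion does hold whenever all three conditions are met, completing the equivalence.

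The step I expect to be the main obstacle is this endpoint analysis: deciding exactly when the $GL_2(\Z)$-image of the complement of one open slope-arc is contained in another open slope-arc is precisely what separates the strict framing inequalities of the second bullet from the equality exclusion of the third, and the same-sign and opposite-sign configurations of $\big(\tau(K_1),\tau(K_2)\big)$ must be treated separately because the critical boundary slope falls on opposite sides. A secondary difficulty --- the part that genuinely extends Hedden--Levine past zero framings --- is the converse to (i): showing that if some $K_i$ is not an $L$-space knot, then for no framings $n_1,n_2$ and no partner knot can $Y(K_1^{[n_1]},K_2^{[n_2]})$ be an $L$-space, i.e.\ that the extra components or nontrivial local system in the bordered invariant of $X_{K_i}$ always push the intersection count strictly above $|n_1n_2-1|$.
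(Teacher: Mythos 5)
Your outline is correct in its conclusions and its arithmetic, but it takes a genuinely different route from the paper. You outsource the hard Floer-theoretic content to two external results: the Hanselman--Rasmussen--Watson $L$-space gluing theorem for boundary-incompressible pieces, and the identification of the $L$-space filling interval of an $L$-space knot complement in an $L$-space homology sphere as the closed arc from $2\tau-1$ to $\infty$ (Ozsv\'ath--Szab\'o / Rasmussen--Rasmussen). With those in hand the theorem does reduce to your M\"obius-transformation bookkeeping, and that bookkeeping checks out: with $\phi(t)=(n_2t+1-n_1n_2)/(t-n_1)$ one gets $\phi(2\tau(K_1)-1)=n_2+\tfrac{1}{2\tau(K_1)-1-n_1}$, so the complement of $\phi(\mathcal{L}^\circ(X_{K_1}))$ is the closed arc $\bigl[\,n_2-\tfrac{1}{\,n_1-2\tau(K_1)+1\,},\,n_2\,\bigr]$, and requiring it to sit inside $\mathcal{L}^\circ(X_{K_2})$ reproduces exactly the three bullets, including the exclusion of $n_1=2\tau(K_1)$, $n_2=2\tau(K_2)$ in the same-sign case. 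The paper instead works entirely inside bordered Floer homology: for the \emph{only if} direction it constructs pairs of durable (or weakly durable) generators in $\CFD$ directly from vertically and horizontally simplified bases of $CFK^-$ (the subspaces $B'_k$, Lemmas \ref{lem:no_left_down} and \ref{lem:no_up_right}), shows their box tensor products survive and have opposite relative $\Z_2$ grading, and treats the boundary case $n_i=2\tau(K_i)$ by hand; for the \emph{if} direction it cancels all odd-graded generators of the box tensor product explicitly using the staircase form of $\CFD$ for $L$-space knot complements. Your approach buys brevity and makes the rational-framing generalization (the paper's Conjecture \ref{conjecture}) transparent, but it leans on machinery that is much heavier than, and historically subsequent to, this paper, whereas the paper's argument is self-contained at the level of $\CFD$/$\CFA$ and the $\Z_2$ grading. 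Two small corrections to your own assessment: under the gluing theorem the non-Floer-simple case is immediate (an empty interior cannot help cover, and the other interval always misses the rational longitude), so that is not where the difficulty lives --- the difficulty is packed into the gluing theorem itself, which is precisely the kind of generator-level analysis the paper carries out; and you should record why a nontrivial knot exterior in a (possibly reducible) $L$-space homology sphere has incompressible boundary and why the slope-interval formula holds beyond $S^3$, both true but needing citation.
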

\noindent The definition and basic properties of $L$-space knots are recalled in Section \ref{sec:Lspace_knots}. Here $\tau(K)$ denotes the Ozsv{\'a}th-Szab{\'o} concordance invariant.

The \emph{if} direction of Theorem \ref{main_theorem} can be seen by explicit tensor product computations, since the bordered Heegaard Floer invariants of an $L$-space knot complement have a well understood form; we do this in Section \ref{sec:if_direction}. The rest of Section \ref{sec:main_proof} is devoted to the proof of the \emph{only if} direction, which is broadly similar to the proof of Theorem \ref{thm:HL_splicing}. We first prove that the relevant bordered Heegaard Floer invariants contain generators satisfying certain properties. These generators, which we call \emph{durable} generators, are defined in Section \ref{sec:durable_generators}; the definition is motivated by the generators used in \cite{splicing}. Using the existence of durable generators, we can find at least two generators in $\CFhat(Y(K_1^{[n_1]}, K_2^{[n_2]}) )$ that survive in homology.

Unlike the 0-framed case, finding two generators in $\HFhat(Y(K_1^{[n_1]}, K_2^{[n_2]}) )$ is not enough to prove that $Y(K_1^{[n_1]}, K_2^{[n_2]})$ is not an $L$-space, since splicing integer framed knot complements does not, in general, produce an integral homology sphere. The key to solving this problem is the $\Z_2$ grading on (bordered) Heegaard Floer homology. By understanding the $\Z_2$ gradings of the durable generators we pick out in each bordered Heegaard Floer invariant, we can show that the two resulting generators in $\HFhat(Y(K_1^{[n_1]}, K_2^{[n_2]}))$ have different $\Z_2$ gradings. This, it turns out, is sufficient to show that $\HFhat(Y(K_1^{[n_1]}, K_2^{[n_2]}))$ is not an $L$-space.

\bigbreak

\noindent {\bf Acknowledgements: } I am  grateful to Robert Lipshitz for valuable comments on earlier drafts of this paper. I would also like to thank Adam Levine for helpful conversations and Jen Hom for answering numerous questions about knot Floer homology.

\bigbreak

\section{Background}

\subsection{Bordered Heegaard Floer homology}
Bordered Heegaard Floer homology is an invariant of 3-manifolds with parametrized boundary introduced in \cite{LOT:Bordered}. We assume the reader is familiar with the basics of bordered Heegaard Floer homology in the torus boundary case, but we review the most important definitions here.

Bordered Heegaard Floer homology associates a differential algebra to each parametrized surface. The algebra $\Alg = \Alg(T^2)$ associated to the torus is generated as a vector space over $\F = \Z_2$ by eight elements: two idempotents, $\iota_0$ and $\iota_1$, and six Reeb elements $\rho_1, \rho_2, \rho_3, \rho_{12}, \rho_{23}$, and $\rho_{123}$. The idempotents satisfy $\iota_i \iota_j = \delta_{ij} \iota_i$, and the identity element is ${\bf 1} = \iota_0 + \iota_1$. Let $\Idem$ denote the ring of idempotents. The Reeb elements interact with idempotents on either side as follows:
\[ \iota_0 \rho_1 = \rho_1 \iota_1 = \rho_1, \quad \iota_1 \rho_2 = \rho_2 \iota_0 = \rho_2, \quad \iota_0 \rho_3 = \rho_3 \iota_1 = \rho_3, \]
\[ \iota_0 \rho_{12} = \rho_{12} \iota_0 = \rho_{12}, \quad \iota_1 \rho_{23} = \rho_{23} \iota_1 = \rho_{23}, \quad \iota_0 \rho_{123} = \rho_{123} \iota_1 = \rho_{123}. \]
The only nonzero products of Reeb elements are
$\rho_1 \rho_2 = \rho_{12}$, $\rho_2 \rho_3 = \rho_{23}$, and $ \rho_1 \rho_{23} = \rho_{12} \rho_3 = \rho_{123} $. The differential on $\Alg$ is zero. For a more detailed treatment of the torus algebra see \cite[Sec 11.1]{LOT:Bordered}.

To a 3-manifold $Y$ with torus boundary and a parametrization $\phi: T^2 \to \partial Y$, we associate a right type $A$ module $\CFA(Y, \phi)$ if $\phi$ is orientation-preserving or a left type $D$-module $\CFD(Y, \phi)$ if $\phi$ is orientation-reversing (the map $\phi$ is often suppressed from the notation). These modules are invariants of the pair $(Y, \phi)$ up to homotopy equivalence. Recall that a \emph{type $A$ module} over $\Alg$ is a right $\Ainfty$-module $M$ over $\Alg$ (we can think of $\Alg$ as an $\Ainfty$-algebra with trivial higher products). Such a module has multiplication maps
\[ m_{k+1}: M \otimes_\Idem \underbrace{\Alg \otimes_\Idem \cdots \otimes_\Idem \Alg}_\text{k times} \rightarrow M \]satisfying certain $\Ainfty$ relations (see \cite[Definition 2.5]{LOT:Bordered}). A \emph{type $D$ module} over $\Alg$ is a $\Z_2$-vector space $N$ with a left action of $\Idem$ such that $N = \iota_0 N \oplus \iota_1 N$ and a map
$$\delta_1: N \to \Alg\otimes_\Idem N$$
such that 
$$(\mu\otimes \id_N)\circ(\id_\Alg \otimes \delta_1) \circ \delta_1 = 0, $$
where $\mu$ denotes multiplication on $\Alg$.

For a type $D$ module over $\Alg$, we will use the notation of coefficient maps described in \cite[Section 11.1]{LOT:Bordered}. Let $V$ be the underlying $\Z_2$-vector space of the type $D$-module. Let $\mathcal{R}$ denote the set of increasing sequences of consecutive integers in $\{1,2,3\}$ and let $\mathcal{R}' = \mathcal{R} \cup \{\emptyset\}$. Note that the set of Reeb elements in $\Alg$ is $\{\rho_I | I\in\mathcal{R}\}$. For simplicity, we define $\rho_\emptyset = {\bf 1}$. We define coefficient maps
$$D_I: V \to V$$
for each $I \in \mathcal{R}'$ such that for each $v\in V$,
$$\delta_1(v) = \sum_{I\in\mathcal{R}'} \rho_I \otimes D_I(v).$$
A type $D$ module can be represented by a directed graph: vertices correspond to generators and for generators $\x$ and $\y$ there is an arrow from the vertex $\x$ to the vertex $\y$ labelled with $D_I$ if the coefficient of $\y$ in $D_I(\x)$ is nonzero.

We say that a type $A$ module $M$ is \emph{bounded} if there is some $K$ such that for all $x \in M$, $k \ge K$ and any $I_1, \ldots, I_k \in \mathcal{R'}$, $m_{k+1}(x, \rho_{I_1}, \ldots, \rho_{I_k}) = 0$. We say that a type $D$ module $N$ is \emph{bounded} if there is some $K$ such that for all $x \in M$, $k \ge K$ and any $I_1, \ldots, I_k \in \mathcal{R'}$, $ (D_{I_k} \circ \cdots \circ D_{I_1})(y) = 0$. If either $M$ or $N$ is bounded, we can define the \emph{box tensor product} $M\boxtimes N$ to be the vector space $M \otimes_\Idem N$ equipped with the differential
$$\partial^\boxtimes(x\otimes y) = \sum_{I_1, \ldots, I_r \in \mathcal{R}}  m_{r+1}(x, \rho_{I_1}, \ldots, \rho_{I_r}) \otimes (D_{I_r} \circ \cdots \circ D_{I_1})(y).$$

\noindent Bordered Heegaard Floer invariants satisfy the following pairing theorem \cite[Theorem 1.3]{LOT:Bordered}: If $\CFA(Y_1, \phi_1)$ and $\CFD(Y_2, \phi_2)$ are bordered Heegaard Floer invariants and at least one of them is bounded, then
\begin{equation}
\CFA(Y_1, \phi_1) \boxtimes \CFD(Y_2, \phi_2) \cong \CFhat( Y_1 \cup_{\phi_2\circ\phi_1^{-1}} Y_2).
\end{equation}

Finally, recall that given a type $D$ invariant for a bordered manifold, the corresponding type $A$ invariant can be computed using an algorithm described in \cite[Section 2.3]{splicing}. There is a one-to-one correspondence between generators of $\CFD$ and generators of $\CFA$, and $\Ainfty$ operations in $\CFA$ are derived from chains of sequential coefficient maps in $\CFD$. As a convention, we will denote type $A$ generators with a bar to distinguish them from their type $D$ counterparts.

\subsection{$\Z_2$ gradings with torus boundary}

First, we review the $\Z_2$ grading in the closed case. For a closed 3-manifold $Y$, the relative $\Z_2$ grading on $\HFhat(Y)$ can be defined in terms of a genus $g$ Heegaard diagram for $Y$ with oriented $\alpha$ and $\beta$ curves. A generator $\x$ of $\HFhat(Y)$ corresponds to a $g$-tuple of intersection points $(x_1, \ldots, x_g)$, where $x_i \in \alpha_i \cap \beta_{\sigma_\x(i)}$ and $\sigma_\x$ is a permutation of $\{1, \ldots, g\}$. The permutation $\sigma_\x$ has a sign, and the orientation on the $\alpha$ and $\beta$ curves gives rise to a sign $s(x_i)$ for each intersection point $x_i$. The grading of $\x$, $gr(\x)$, is defined to be the element of $\Z_2$ such that
\[\displaystyle (-1)^{gr(\x)} = \text{sign}(\sigma_\x) \left( \prod_{i=1}^g s(x_i) \right).\]
This defines a relative $\Z_2$ grading on $\HFhat(Y)$, since it depends on the ordering of the $\alpha$ and $\beta$ curves and on their orientations. We note that the grading can be made absolute, but the relative grading is sufficient for the purposes of this paper so we will not discuss the absolute grading.

Note that the Euler characteristic of $\HFhat$ with respect to this relative grading can be interpreted as the determinant (up to sign) of the $g \times g$ matrix whose entries $M_{ij}$ are given by the signed intersection number of $\alpha_i$ and $\beta_j$. This same determinant also gives a computation of the order of $H_1(Y)$. This relationship implies the equation
\begin{equation}
\label{eq:euler_char}
\left| \text{rk}(\HFhat_1(Y)) - \text{rk}(\HFhat_0(Y)) \right| = \begin{cases} | H_1(Y) | & \text{if $Y$ is a $\Q HS$}\\ 0 & \text{otherwise}\end{cases},
\end{equation}
which leads to the inequality
$$\text{rk}(\HFhat(Y)) \ge | H_1(Y) |$$
mentioned in the introduction \cite{OzSz:properties}. The following proposition is an easy consequence of Equation \eqref{eq:euler_char}.

\begin{prop}
\label{prop:Z2_grading_Lspace}
A 3-manifold $Y$ is an $L$-space if and only if all elements of $\HFhat(Y)$ have the same $\Z_2$ grading.
\end{prop}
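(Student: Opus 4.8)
The plan is to derive Proposition~\ref{prop:Z2_grading_Lspace} directly from Equation~\eqref{eq:euler_char}. Write $a = \text{rk}(\HFhat_0(Y))$ and $b = \text{rk}(\HFhat_1(Y))$, so that $\text{rk}(\HFhat(Y)) = a + b$. Recall that $\text{rk}(\HFhat(Y)) \ge |H_1(Y)| \ge 1$ always, and that $Y$ being an $L$-space means by definition that $\text{rk}(\HFhat(Y)) = |H_1(Y)|$, which in particular forces $Y$ to be a rational homology sphere (an infinite $H_1$ would make the right-hand side meaningless / the rank infinite, but more to the point $L$-spaces are only defined for $\Q HS$'s, and the $\Q HS$ case is the relevant one since Equation~\eqref{eq:euler_char} gives $|a-b|=0$ otherwise, precluding $a+b = |H_1|$ unless both vanish).

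For the forward direction, suppose $Y$ is an $L$-space. Then $a + b = |H_1(Y)|$ and, since $Y$ is a $\Q HS$, Equation~\eqref{eq:euler_char} gives $|a - b| = |H_1(Y)| = a+b$. From $|a-b| = a+b$ with $a, b \ge 0$ we conclude that one of $a, b$ is zero: indeed squaring gives $(a-b)^2 = (a+b)^2$, i.e. $4ab = 0$. Hence all of $\HFhat(Y)$ sits in a single $\Z_2$ grading. For the converse, suppose all elements of $\HFhat(Y)$ have the same $\Z_2$ grading, so one of $a, b$ is zero and $\text{rk}(\HFhat(Y)) = |a - b|$. If $Y$ is a $\Q HS$, Equation~\eqref{eq:euler_char} identifies $|a-b|$ with $|H_1(Y)|$, so $\text{rk}(\HFhat(Y)) = |H_1(Y)|$ and $Y$ is an $L$-space. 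If $Y$ is not a $\Q HS$, then $H_1(Y)$ is infinite and $\HFhat(Y)$ is nonzero (it always is), so $\text{rk}(\HFhat(Y)) = |a-b| \ge 1$, but Equation~\eqref{eq:euler_char} forces $|a - b| = 0$, a contradiction; so this case cannot occur, and we are done.

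I do not anticipate a serious obstacle here: the proposition is a short formal consequence of the Euler-characteristic identity already recorded, and the only point requiring a word of care is the interplay between the two cases of Equation~\eqref{eq:euler_char} — namely ruling out the non-$\Q HS$ situation in the converse direction, which follows from the nonvanishing of $\HFhat$. One might also remark that the relative $\Z_2$ grading is all that is used, so the statement is insensitive to the choice of absolute lift; this is worth noting explicitly since the absolute grading was deliberately not discussed.
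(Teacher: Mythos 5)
Your argument is correct and is exactly the route the paper intends: the paper states Proposition~\ref{prop:Z2_grading_Lspace} as an easy consequence of Equation~\eqref{eq:euler_char}, and your case analysis (with $4ab=0$ in the forward direction and the nonvanishing of $\HFhat$ ruling out the non-$\Q HS$ case in the converse) is a correct filling-in of that assertion.
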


The relative $\Z_2$ grading was extended to bordered Heegaard Floer homology in \cite{Petkova:relative_grading}. We will only discuss the case of manifolds with torus boundary. Let $(Y, \phi: T^2\to \partial Y )$ be a bordered manifold with a genus $g$ bordered Heegaard diagram $\HeegDiag$. The bordered diagram $\HeegDiag$ contains two $\alpha$ arcs, which we label $\alpha^a_1$ and $\alpha^a_2$. The $(g-1)$ closed $\alpha$ curves are labeled $\alpha_1^c, \ldots, \alpha_{g-1}^c$, and the $\beta$ curves are labeled $\beta_1, \ldots, \beta_g$. Orient the $\alpha$ and $\beta$ curves arbitrarily and orient the $\alpha$ arcs as follows: if $(Y, \phi)$ is type $D$, label the endpoints of the $\alpha$ arcs $\alpha_1^-, \alpha_2^-, \alpha_1^+, \alpha_2^+$ starting at the basepoint and following the orientation of $-\partial\HeegDiag$ and orient the arc $\alpha_i^a$ from $\alpha_i^+$ to $\alpha_i^-$; if $(Y,\phi)$ is type $A$, label the endpoints of the $\alpha$ arcs $\alpha_1^-, \alpha_2^-, \alpha_1^+, \alpha_2^+$ starting at the basepoint and following the orientation of $\partial\HeegDiag$ and orient the arc $\alpha_i^a$ from $\alpha_i^-$ to $\alpha_i^+$ (see Figure \ref{fig:alpha_arcs}).

\begin{figure}
\begin{center}
\begin{tikzpicture}[scale = 1.3]
\node (a1) at (0,0) {};
\node (a2) at (0,1) {};
\node (a3) at (0,2) {};
\node (a4) at (0,3) {};

\node[left, color = red] at (0,0) {$\alpha_1^-$};
\node[left, color = red] at (0,1) {$\alpha_2^-$};
\node[left, color = red] at (0,2) {$\alpha_1^+$};
\node[left, color = red] at (0,3) {$\alpha_2^+$};
\node[right] at (0,.5) {$\rho_1$};
\node[right] at (0,1.5) {$\rho_2$};
\node[right] at (0,2.5) {$\rho_3$};

\draw (0,0) -- (0,1) -- (0,2) -- (0,3);
\draw[bend left = 90, min distance = 1.3cm] (0,0) to (-.7,0);
\draw[bend right = 90, min distance = 1.3cm] (0,3) to (-.7,3);
\draw (-.35, 4) -- (2, 4);
\draw (-.35, -1) -- (2, -1);
\draw (-.7,0) -- (-.7, 3);

\node at (-.35, 4) {$\bullet$};
\node[above right] at (-.35, 4) {$z$};

\draw[red] (0,0) -- (1,0);
\draw[red] (0,2) -- (1,2);
\draw[red, bend right = 45] (1,0) to node[below right = -1mm, red]{$\alpha^a_1$} (2,1);
\draw[red, bend right = 45] (2,1) to (1,2);
\draw[red] (1.9,1.1) -- (2,1) -- (2.1,1.1);

\draw[red] (0,1) -- (1,1);
\draw[red] (0,3) -- (1,3);
\draw[red, bend right = 45] (1,1) to (2,2);
\draw[red, bend right = 45] (2,2) to node[above right = -1mm, red]{$\alpha^a_2$} (1,3);
\draw[red] (1.9,2.1) -- (2,2) -- (2.1,2.1);

\end{tikzpicture} \hspace{3 cm}
\begin{tikzpicture}[scale = 1.3]
\node (a1) at (0,0) {};
\node (a2) at (0,1) {};
\node (a3) at (0,2) {};
\node (a4) at (0,3) {};

\node[left, color = red] at (0,3) {$\alpha_1^-$};
\node[left, color = red] at (0,2) {$\alpha_2^-$};
\node[left, color = red] at (0,1) {$\alpha_1^+$};
\node[left, color = red] at (0,0) {$\alpha_2^+$};
\node[right] at (0,.5) {$\rho_3$};
\node[right] at (0,1.5) {$\rho_2$};
\node[right] at (0,2.5) {$\rho_1$};

\draw (0,0) -- (0,1) -- (0,2) -- (0,3);
\draw[bend left = 90, min distance = 1.3cm] (0,0) to (-.7,0);
\draw[bend right = 90, min distance = 1.3cm] (0,3) to (-.7,3);
\draw (-.35, 4) -- (2, 4);
\draw (-.35, -1) -- (2, -1);
\draw (-.7,0) -- (-.7, 3);

\node at (-.35, 4) {$\bullet$};
\node[above right] at (-.35, 4) {$z$};

\draw[red] (0,0) -- (1,0);
\draw[red] (0,2) -- (1,2);
\draw[red, bend right = 45] (1,0) to node[below right = -1mm, red]{$\alpha^a_2$} (2,1);
\draw[red, bend right = 45] (2,1) to (1,2);
\draw[red] (1.9,1.1) -- (2,1) -- (2.1,1.1);

\draw[red] (0,1) -- (1,1);
\draw[red] (0,3) -- (1,3);
\draw[red, bend right = 45] (1,1) to (2,2);
\draw[red, bend right = 45] (2,2) to node[above right = -1mm, red]{$\alpha^a_1$} (1,3);
\draw[red] (1.9,2.1) -- (2,2) -- (2.1,2.1);

\end{tikzpicture}
\end{center}
\caption{The orientation of the $\alpha$ arcs on a bordered Heegaard diagram with type $D$ boundary (left) or type $A$ boundary (right).}
\label{fig:alpha_arcs}
\end{figure}
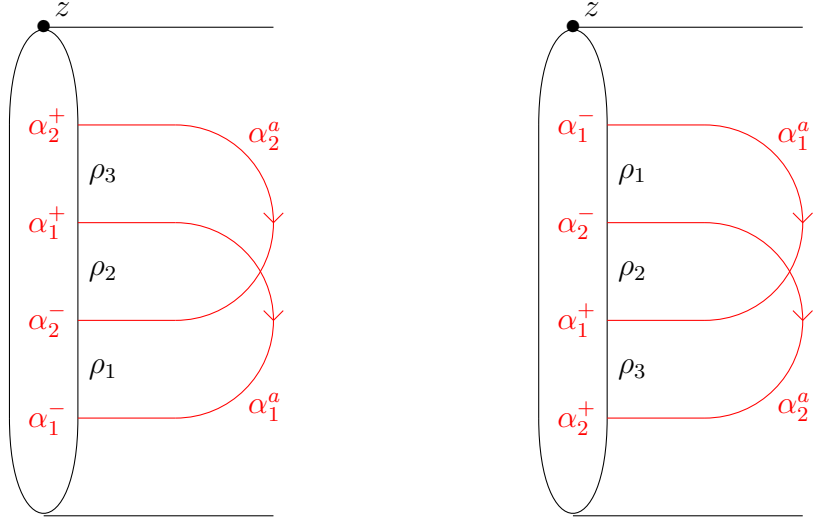

A generator of $\CFD(Y)$ or $\CFA(Y)$ corresponds to a $g$-tuple of intersection points $\x = (x_1, \ldots, x_g)$, where $x_1$ lies on $\alpha^a_1$ or $\alpha^a_2$ and $x_i$ lies on $\alpha^c_{i-1}$ for $2\le i\le g$. For each $i$, let $s(x_i)$ be the sign of the intersection of the relevant $\alpha$ arc/curve and $\beta$ curve at $x_i$. Let $\sigma_\x$ be the permutation such that $x_i$ lies on $\beta_{\sigma(i)}$ for each $i$. The $\Z_2$ grading on $\CFA(Y)$ can now be defined by
\[\displaystyle (-1)^{gr(\x)} = \text{sign}(\sigma_\x) \left( \prod_{i=1}^g s(x_i) \right).\]
The $\Z_2$ grading on $\CFD(Y)$ is defined by
\[\displaystyle (-1)^{gr(\x)} = s\left(o(\x)\right) \text{sign}(\sigma_\x) \left( \prod_{i=1}^g s(x_i) \right),\]
where $s(o(\x))$ is $+1$ if $\x$ occupies $\alpha_1^a$ and $-1$ if $\x$ occupies $\alpha_2^a$.

It is not difficult to see that the closed $\Z_2$ grading is recovered when two bordered manifolds are glued together. If $\x \in \CFA(Y_1)$ and $\y \in \CFD(Y_2)$, then the generator $\x \otimes \y$ of $\CFA(Y_1) \boxtimes \CFD(Y_2) \cong \CFhat( Y_1 \cup Y_2 )$ has $\Z_2$ grading
\[ gr(\x\otimes\y) = gr(\x) + gr(\y).\]

\begin{remark}
Just as in the closed case, the relative $\Z_2$ grading on bordered Heegaard Floer can be made into an absolute grading (see \cite{Petkova:absolute_grading}). However, this grading does not recover the absolute grading when two bordered manifolds are glued together. Consider, for example, $\CFD$ and $\CFA$ for the $0$-framed solid torus and the $(-2)$-framed solid torus. For any way of making the relative grading absolute on these four modules there is a pair whose tensor product has negative Euler characteristic with respect to the induced absolute grading, but $\HFhat$ always has nonnegative Euler characteristic.
\end{remark}

The grading on bordered Heegaard Floer homology specifies a grading on the algebra associated with the boundary. For the torus algebra $\Alg$, the grading is as follows:

\[\begin{array}{lll}
gr(\rho_1) = 0 & gr(\rho_2) = 1 & gr(\rho_{12}) = 1 \\
gr(\rho_3) = 0 & gr(\rho_{123}) = 1 & gr(\rho_{23}) = 1
\end{array}\]

The grading respects module multiplication in the sense that if $\rho_I$ is an element of $\Alg$ and $\x$ is a generator in $\CFD(Y)$, then
\begin{equation}
\label{eq:Z2_property1}
 gr(\rho_I \cdot \x) \equiv gr(\rho_I) + gr(\x)	 \quad \text{(mod $2$)}.
 \end{equation}
 If $\x$ is a generator in $\CFA(Y)$ and $\rho_{I_1}, \ldots, \rho_{I_k}$ are elements of $\Alg$, then
 \begin{equation}
 gr \left( m_{k+1}(\x, \rho_{I_1}, \ldots, \rho_{I_k}) \right) \equiv gr(\x) + gr(\rho_{I_1}) + \cdots + gr(\rho_{I_k}) + k + 1 \quad \text{(mod $2$)}.
 \end{equation}
The grading also satisfies 
\begin{equation}
\label{eq:Z2_property2}
gr\left(\partial\x\right) \equiv gr(\x) + 1 \quad \text{(mod $2$)}
\end{equation} for any generator $\x$ of $\CFD$. 

Note that if the directed graph corresponding to $\CFD(Y)$ is connected, the relative $\Z_2$ grading can be computed without reference to a Heegaard diagram. We simply choose the grading of one generator arbitrarily and determine the other gradings using Equations \eqref{eq:Z2_property1} and \eqref{eq:Z2_property2}. The grading on $\CFA(Y)$ can be obtained from the grading on $\CFD(Y)$ by flipping the grading of each generator with idempotent $\iota_0$.

\subsection{Knot Floer Homology}

Let $K$ be a knot in an $L$-space homology 3-sphere $Y$. Let $C^- = CFK^-(K, Y)$ denote the knot Floer complex of $K$ with ground field $\F = \Z_2$. Recall that $C^-$ is a chain complex over $\F[U]$ with a filtration
$$ \cdots \subset \mathcal{F}_i \subset \mathcal{F}_{i+1} \subset \cdots \subset C^- .$$
If $g(K)$ is the genus of $K$, then we have that $\mathcal{F}_{g(K)-1} \subsetneq \mathcal{F}_{g(K)} = C^-$, $\mathcal{F}_{-g(K)-1} \subset UC^-$, and $\mathcal{F}_{-g(K)} \not\subset UC^-$.

For any nonzero $x \in C^-$, the \emph{Alexander grading} of $x$ is $A(x) = \text{min}\{i | x\in\mathcal{F}_i\}$. Multiplication by $U$ decreases the Alexander grading by one. Let $C^\infty$ denote $CFK^\infty(K, Y) = C^- \otimes_{\F[U]} \F[U, U^{-1}]$; the filtration on $C^-$ extends to a filtration on $C^\infty$. We can picture $C^-$ and $C^\infty$ as living on the integer lattice in $\R^2$. If $x$ is a generator of $C^-$ over $\F[U]$, then the element $U^k x \in C^\infty$ corresponds to a point at $(-k, A(x) - k)$. We may assume that $C^-$ is reduced, meaning for any $x\in C^-$, $\partial x = U \cdot y + z$, where $A(z) < A(x)$. In terms of the lattice, this means that the differential only moves down and/or to the left. From $C^-$ and $C^\infty$ we construct two additional complexes: the \emph{vertical complex} $C^v = C^-/UC^-$ with induced differential $\partial^v$, and the \emph{horizontal complex} $C^h = \mathcal{F}_0(C^\infty)/\mathcal{F}_{-1}(C^\infty)$ with induced differential $\partial^h$.

We will need to work with special bases for $C^-$. Recall that the associated graded object of $C^-$ is the free $\F[U]$-module
$$\text{gr}(C^-) = \bigoplus_{i\in\Z} \mathcal{F}_i / \mathcal{F}_{i-1},$$
with induced multiplication by $U$. For any $x \in C^-$, let $[x]$ denote the image of $x$ in $\mathcal{F}_{A(x)} / \mathcal{F}_{A(x)-1} \subset \text{gr}(C^-)$. A basis $\{x_1, \ldots, x_n\}$ for $C^-$ over $\F[U]$ is called a \emph{filtered basis} if $\{[x_1], \ldots, [x_n]\}$ is a basis for $\text{gr}(C^-)$ over $\F[U]$. Any two filtered bases $\{x_1, \ldots, x_n\}$ and $\{x'_1, \ldots, x'_n\}$ are related by a \emph{filtered change of basis}: if $x_i = \Sigma_j a_{ij}x'_j$ and $x'_i = \Sigma_j b_{ij}x_j$ with $a_{ij}, b_{ij} \in \F[U]$, then $A(a_{ij}x'_j) \le A(x_i)$ and $A(b_{ij}x_j) \le A(x'_i)$ for all $i, j$. There are two particularly important types of filtered basis:

\begin{definition}
A \emph{vertically simplified basis} is a filtered basis $\{\xi_0, \ldots, \xi_{2n}\}$ for $C^-$ over $\F[U]$ such that for $j = 1, \ldots, n$,
$$A(\xi_{2j-1}) - A(\xi_{2j}) = h_j > 0 \quad \text{and} \quad \partial \xi_{2j-1} = \xi_{2j} \text{ (mod } UC^-),$$
while for $i = 0, 1, \ldots, n$,  $\partial \xi_{2i} = 0 \text{ (mod } UC^-)$. We say that there is a \emph{vertical arrow of length $h_j$} from $\xi_{2j-1}$ to $\xi_{2j}$.
\end{definition}

\begin{definition}
A \emph{horizontally simplified basis} is a filtered basis $\{\eta_0, \ldots, \eta_{2n}\}$ for $C^-$ over $\F[U]$ such that for $j = 1, \ldots, n$,
$$A(\eta_{2j}) - A(\eta_{2j-1}) = \ell_j > 0 \quad \text{and} \quad \partial \eta_{2j-1} = U^{\ell_j}\eta_{2j} \text{ (mod } \mathcal{F}_{A(\eta_{2j-1})-1}),$$
while for $i = 0, 1, \ldots, n$,  $A(\partial \eta_{2i}) < A(\eta_{2i})$. We say that there is a \emph{horizontal arrow of length $\ell_j$} from $\eta_{2j-1}$ to $\eta_{2j}$.
\end{definition}

$C^-$ always has a vertically simplified basis and a horizontally simplified basis \cite[Proposition 11.52]{LOT:Bordered}. Moreover, we can assume that the change of basis between these two bases is well behaved, according to the following proposition.

\begin{prop}\cite[Proposition 2.5]{splicing}
\label{prop:nice_basis}
There exists a vertically simplified basis $\{\xi_0, \ldots, \xi_{2n}\}$ and a horizontally simplified basis $\{\eta_0, \ldots, \eta_{2n}\}$ for $C^-$ such that, if
$$\xi_p = \sum_{q=0}^{2n} a_{p,q} \eta_q \quad\text{and}\quad \eta_p = \sum_{q=0}^{2n} b_{p,q} \xi_q,$$
where $a_{p,q}, b_{p,q} \in \F[U]$, then $a_{p,q} = 0$ whenever $A(\xi_p) \neq A(a_{p,q} \eta_q)$ and $b_{p,q} = 0$ whenever $A(\eta_p)\neq A(b_{p,q}\xi_q)$. In other words, each $\xi_p$ is an $\F[U]$-linear combination of the elements $\eta_q$ that are the same filtration level as $\xi_p$, and vice versa.
\end{prop}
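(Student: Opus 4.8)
The plan is to reduce the statement to a purely graded linear-algebra problem over $\F[U]$. The key observation is that the conditions defining a horizontally simplified basis $\{\eta_q\}$ are really conditions on the images $[\eta_q]$ in the associated graded object $\text{gr}(C^-)=\bigoplus_{i\in\Z}\mathcal{F}_i/\mathcal{F}_{i-1}$. Indeed, since $\partial$ is filtered it induces a differential $\partial^{\mathrm{gr}}$ on $\text{gr}(C^-)$ that preserves the Alexander grading, and the requirements $\partial\eta_{2j-1}\equiv U^{\ell_j}\eta_{2j}\pmod{\mathcal{F}_{A(\eta_{2j-1})-1}}$ and $A(\partial\eta_{2i})<A(\eta_{2i})$ say precisely that, in the basis $\{[\eta_q]\}$, the map $\partial^{\mathrm{gr}}$ has the simplified shape $[\eta_{2j-1}]\mapsto U^{\ell_j}[\eta_{2j}]$ with the remaining generators cycles. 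So I would fix any vertically simplified basis $\{\xi_0,\dots,\xi_{2n}\}$, leave it untouched, and build a compatible horizontally simplified basis by simplifying $\partial^{\mathrm{gr}}$ entirely inside $\text{gr}(C^-)$, using only changes of basis homogeneous for the Alexander grading. Such changes of basis, lifted back to $C^-$, are automatically filtered changes of basis that do not mix filtration levels.

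Concretely, $\text{gr}(C^-)$ is a finitely generated free module over $\F[U]$, graded so that $U$ has degree $-1$, $\partial^{\mathrm{gr}}$ is a homogeneous degree-$0$ square-zero endomorphism, and $\{[\xi_p]\}$ is a homogeneous basis. I would put the chain of homogeneous submodules $\mathrm{im}\,\partial^{\mathrm{gr}}\subseteq\ker\partial^{\mathrm{gr}}\subseteq\text{gr}(C^-)$ into a graded Smith normal form: a homogeneous basis $\{[\eta_q]\}$ of $\text{gr}(C^-)$ so that a sub-collection spans $\ker\partial^{\mathrm{gr}}$ and monomial multiples of a sub-collection span $\mathrm{im}\,\partial^{\mathrm{gr}}$; this is the structure theorem over the PID $\F[U]$ carried out by homogeneous row and column operations. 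Reducedness of $C^-$ forces every component of $\partial^{\mathrm{gr}}$, in the $\{[\xi_p]\}$ basis, to carry a positive power of $U$, so $\mathrm{im}\,\partial^{\mathrm{gr}}\subseteq U\cdot\text{gr}(C^-)$ and hence, by freeness, $\mathrm{im}\,\partial^{\mathrm{gr}}\subseteq U\cdot\ker\partial^{\mathrm{gr}}$; this makes all the elementary divisors genuine positive powers $U^{\ell_j}$, so $\{[\eta_q]\}$ displays $\partial^{\mathrm{gr}}$ in horizontally simplified form with every $\ell_j>0$, and the numerics (exactly $2n+1$ generators, $n$ arrows) match the vertically simplified basis since $C^-$ is known to admit a horizontally simplified basis at all. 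Because $\{[\xi_p]\}$ and $\{[\eta_q]\}$ are both homogeneous bases of the same graded free module, the change-of-basis coefficients are forced to be monomials of prescribed degree: $[\xi_p]=\sum_q\bar a_{p,q}[\eta_q]$ with $\bar a_{p,q}$ a scalar multiple of $U^{A(\eta_q)-A(\xi_p)}$ when $A(\eta_q)\ge A(\xi_p)$ and $\bar a_{p,q}=0$ otherwise, and symmetrically for the inverse.

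Finally I would lift the $[\eta_q]$ to elements $\eta_q\in C^-$ by declaring the change-of-basis matrix over $\F[U]$ to be the one whose entries are the monomials $\bar a_{p,q}$ above; a short degree count shows its product with the analogous lift of the inverse matrix equals the identity over $\F[U]$, since each entry of the product is a monomial whose $U$-power is forced to be $U^{A(\xi_r)-A(\xi_p)}$ and whose scalar is read off from the identity already established over $\text{gr}(C^-)$. Thus $\{\eta_q\}$ is an $\F[U]$-basis of $C^-$ lifting $\{[\eta_q]\}$: it is a filtered basis, it is horizontally simplified because that property depends only on the images $[\eta_q]$, the original basis $\{\xi_p\}$ is still vertically simplified, and by construction $\xi_p=\sum_q a_{p,q}\eta_q$ with every nonzero $a_{p,q}$ a monomial satisfying $A(a_{p,q}\eta_q)=A(\xi_p)$, and symmetrically for $\eta_p=\sum_q b_{p,q}\xi_q$ — which is exactly the conclusion. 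The conceptual crux, and the step I expect to take the most care, is the first one: recognizing that horizontal simplification is governed entirely by the Alexander-grading-preserving differential $\partial^{\mathrm{gr}}$ on $\text{gr}(C^-)$. Once that is in hand, the rest is bookkeeping with monomials and degrees to confirm that a graded Smith normal form and its lift do not disturb filtration levels.
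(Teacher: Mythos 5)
Your argument is sound, but note that this paper contains no proof of the proposition to compare against: it is imported verbatim from Hedden--Levine (\cite[Proposition 2.5]{splicing}), so the only comparison is with that source, whose argument is run as an explicit filtered change-of-basis (cancellation) procedure while tracking Alexander gradings rather than through the associated graded module. Your crux observation is correct: since $\partial\eta_{2j-1}\equiv U^{\ell_j}\eta_{2j}$ modulo $\mathcal{F}_{A(\eta_{2j-1})-1}$ and $A(\partial\eta_{2i})<A(\eta_{2i})$ say exactly that $\partial^{\mathrm{gr}}[\eta_{2j-1}]=U^{\ell_j}[\eta_{2j}]$ and $\partial^{\mathrm{gr}}[\eta_{2i}]=0$ in $\mathrm{gr}(C^-)$, being horizontally simplified depends only on the classes $[\eta_q]$, so fixing a vertically simplified $\{\xi_p\}$ and changing basis Alexander-homogeneously in $\mathrm{gr}(C^-)$ is enough, and your monomial lift together with the observation that the lifted change-of-basis matrices multiply to the identity over $\F[U]$ does produce a genuine filtered basis of $C^-$ with the stated compatibility. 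Two small points deserve the care you promise: first, the normal form you need is slightly more than ``a sub-collection spans $\ker$ and monomial multiples of a sub-collection span $\mathrm{im}$'' --- you need $\partial^{\mathrm{gr}}$ to match sources to targets one-to-one, which your construction does give if you choose a homogeneous complement $W$ of $\ker\partial^{\mathrm{gr}}$ and pull the adapted basis of $\mathrm{im}\,\partial^{\mathrm{gr}}$ back through the isomorphism $\partial^{\mathrm{gr}}\colon W\to\mathrm{im}\,\partial^{\mathrm{gr}}$; second, the count of exactly $n$ arrows among $2n+1$ generators is fine as you say, since the number of arrows in any horizontally simplified basis equals $\mathrm{rank}_{\F[U]}\,\mathrm{im}\,\partial^{\mathrm{gr}}$, which is basis-independent and equals $n$ by the known existence of some horizontally simplified basis (equivalently, because the $U$-nontorsion part of the horizontal homology has rank one). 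With those refinements your route is a clean and correct alternative: it trades the algorithmic bookkeeping of the cited proof for the graded structure theorem over $\F[U]$, at the cost of having to verify the standard graded facts (homogeneous bases of graded submodules, graded Smith normal form, reducedness forcing $\ell_j>0$), all of which do hold here.
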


Lipshitz, Ozsv{\'a}th, and Thurston describe a method for computing $\CFD$ of the complement of $K$ from $C^-$ (they treat the case of knots in $S^3$, but the proof carries over if $Y$ is an arbitrary $L$-space homology sphere). The statement involves the Ozsv{\'a}th-Szab{\'o} concordance invariant $\tau$, which can be defined in terms of a horizontally or vertically simplified basis by
$$\tau(K) = A(\xi_0) = -A(\eta_0).$$
We parametrize $\partial X_K^{[n]}$ such that $\alpha_1$ represents the meridian $\mu$ and $\alpha_2$ represents the framed longitude $\lambda^{[n]}$. Then according to \cite[Theorem 11.27 and Theorem A.11]{LOT:Bordered},  $\CFD(X_K^{[n]})$ is determined as follows:

\begin{thm}
Suppose that $\{\tilde\xi_0, \ldots, \tilde\xi_{2k}\}$ is a vertically simplified basis for $C^-$, $\{\tilde\eta_0, \ldots, \tilde\eta_{2k}\}$ is a horizontally simplified basis for $C^-$, and 
$$\tilde\xi_p = \sum_{q=0}^{2k} \tilde{a}_{p,q} \tilde\eta_q \quad\text{and}\quad \tilde\eta_p = \sum_{q=0}^{2k} \tilde{b}_{p,q} \tilde\xi_q,$$
where $\tilde{a}_{p,q}, \tilde{b}_{p,q} \in \F[U]$. Let $a_{p,q} = \tilde{a}_{p,q}|_{U=0}$ and $b_{p,q} = \tilde{b}_{p,q}|_{U=0}$. Then $\CFD(X_K^{[n]})$ satisfies the following:
\begin{itemize}
\item
The summand $\iota_0 \CFD(X_K^{[n]})$ has a basis $\{\xi_0, \ldots, \xi_{2k}\}$ and a basis $\{\eta_0, \ldots, \eta_{2k}\}$ such that
$$\xi_p = \sum_{q=0}^{2k} a_{p,q} \eta_q \quad\text{and}\quad \eta_p = \sum_{q=0}^{2k} b_{p,q} \xi_q,$$

\item
The summand $\iota_1 \CFD(X_K^{[n]})$ has dimension $\sum_{j=1}^k (h_j + \ell_j) + | n-2\tau(K) |$, with basis
$$ \bigcup_{j=1}^k \{\kappa^j_1, \ldots, \kappa^j_{h_j}\} \cup \bigcup_{j=1}^k \{\lambda^j_1, \ldots, \lambda^j_{\ell_j}\} \cup \{\mu_1, \ldots, \mu_{| n-2\tau(K) |} \}$$

\item
For $j = 1, \ldots, k$, there are coefficient maps
$$ \xi_{2j-1} \overset{D_1}\longrightarrow \kappa^j_1 \overset{D_{23}}\longleftarrow \cdots \overset{D_{23}}\longleftarrow \kappa^j_{h_j} \overset{D_{123}}\longleftarrow \xi_{2j}.$$
We call this sequence of generators a \emph{vertical chain} corresponding to the vertical arrow of length $h_j$ from $\tilde\xi_{2j-1}$ to $\tilde\xi_{2j}$.

\item
For $j = 1, \ldots, k$, there are coefficient maps
$$ \eta_{2j-1} \overset{D_3}\longrightarrow \lambda^j_1 \overset{D_{23}}\longrightarrow \cdots \overset{D_{23}}\longrightarrow \lambda^j_{\ell_j} \overset{D_2}\longrightarrow \eta_{2j}.$$
We call this sequence of generators a \emph{horizontal chain} corresponding to the horizontal arrow of length $\ell_j$ from $\tilde\xi_{2j-1}$ to $\tilde\xi_{2j}$.

\item
Depending on $t = n - 2\tau(K)$, there are additional coefficient maps
$$\begin{cases}
\xi_0 \overset{D_1}\longrightarrow \mu_1 \overset{D_{23}}\longleftarrow \cdots \overset{D_{23}}\longleftarrow \mu_t \overset{D_3}\longleftarrow \eta_{0} & t > 0 \\
\xi_0 \overset{D_{12}}\longrightarrow \eta_0 & t=0 \\
\xi_0 \overset{D_{123}}\longrightarrow \mu_1 \overset{D_{23}}\longrightarrow \cdots \overset{D_{23}}\longrightarrow \mu_t \overset{D_2}\longrightarrow \eta_0 & t < 0
\end{cases}$$
We call the generators in this sequence the \emph{unstable chain}.
\end{itemize}
\end{thm}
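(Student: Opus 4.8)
The plan is to derive the normal form of $\CFD(X_K^{[n]})$ from the knot Floer complex $C^- = CFK^-(K,Y)$ by invoking, and checking the hypotheses of, the computation of Lipshitz--Ozsv{\'a}th--Thurston. For knots in $S^3$ this is precisely \cite[Theorem 11.27]{LOT:Bordered} (which produces the $\iota_0$ summand together with the vertical and horizontal chains) combined with \cite[Theorem A.11]{LOT:Bordered} (which records how an integer framing $n$ modifies the unstable chain). So the content of the argument is twofold: first, to recall how that algorithm produces the stated structure; second, to verify that its proof applies verbatim when $Y$ is an arbitrary $L$-space integral homology sphere rather than $S^3$.

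For the first part, I would begin with a bordered Heegaard diagram $\HeegDiag$ for $X_K^{[n]}$ obtained from a doubly-pointed Heegaard diagram for the pair $(Y,K)$ by attaching the handle carrying the meridian $\mu$ and the framed longitude $\lambda^{[n]} = \lambda + n\mu$ along $\alpha_1$ and $\alpha_2$, and compute $\CFD(\HeegDiag)$ by the usual count of holomorphic curves. Then I would homotopy-reduce the answer to the claimed form, using a filtered basis for $C^-$ as the organizing data: the $\iota_0$ summand is identified, as an $\F$-module, with $\widehat{CFK}(K) \cong C^-/UC^-$, with the two bases $\{\xi_p\}$ and $\{\eta_p\}$ the reductions of a vertically and a horizontally simplified basis, and the matrices $(a_{p,q})$, $(b_{p,q})$ the $U=0$ specializations of the change-of-basis matrices of Proposition \ref{prop:nice_basis}. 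The $\iota_1$ generators and the coefficient maps into them then come from the ``stabilization'' step: a vertical arrow of length $h_j$ (that is, $\partial \tilde\xi_{2j-1} = \tilde\xi_{2j} \pmod{UC^-}$ with $A(\tilde\xi_{2j-1}) - A(\tilde\xi_{2j}) = h_j$) is replaced by a vertical chain $\xi_{2j-1} \overset{D_1}{\longrightarrow} \kappa^j_1 \overset{D_{23}}{\longleftarrow} \cdots \overset{D_{23}}{\longleftarrow} \kappa^j_{h_j} \overset{D_{123}}{\longleftarrow} \xi_{2j}$ with exactly $h_j$ new $\iota_1$ generators, and symmetrically for each horizontal arrow of length $\ell_j$; the remaining $\iota_1$ generators, $|n - 2\tau(K)|$ of them, form the unstable chain, whose shape is forced by the geometry of the framing arc and splits into the three cases $t = n - 2\tau(K) > 0$, $t = 0$, $t < 0$ precisely because $\tau(K) = A(\xi_0) = -A(\eta_0)$ measures the position of $\lambda^{[n]}$ relative to the Seifert longitude.

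The main obstacle, and the place I would be most careful, is the passage from $S^3$ to a general $L$-space homology sphere $Y$. The key observation is that the only global input LOT use is that $H_*(C^-) \cong \F[U]$ as an $\F[U]$-module, which is exactly equivalent to $\HFhat(Y) \cong \F$, i.e.\ to $Y$ being an $L$-space; every other ingredient --- the existence of vertically and horizontally simplified bases with well-behaved change of basis (Proposition \ref{prop:nice_basis}), the local holomorphic-curve computations near $\partial\HeegDiag$ that produce the maps $D_I$, and the reduction algorithm itself --- is insensitive to the ambient manifold. One must also confirm that $\tau(K)$ and the Alexander-grading normalization that pins down the length and idempotent pattern of the unstable chain still make sense, which they do because $C^-$ carries a $\Z$-valued Alexander filtration with the stated behavior near $\pm g(K)$ for any knot in a homology sphere. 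Matching the idempotents and the directions of the $D_2$, $D_3$, $D_{23}$, $D_{123}$ arrows across the three cases of the unstable chain is the most delicate bookkeeping; I would carry it out by following the normal-form reduction in \cite[Appendix A]{LOT:Bordered} step by step and checking that nothing there used $Y = S^3$ beyond the rank-one homology fact.
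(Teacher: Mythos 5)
Your proposal matches the paper's treatment: the paper does not reprove this result but cites \cite[Theorem 11.27 and Theorem A.11]{LOT:Bordered}, remarking only that the argument carries over verbatim when $S^3$ is replaced by an arbitrary $L$-space integral homology sphere, which is exactly the reduction you make (the rank-one condition $H_*(C^-)\cong\F[U]$ being the only global input). Your added bookkeeping of the chains and the unstable-chain cases is consistent with the cited statement, so the proposal is correct and takes essentially the same route.
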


We will modify this description of $\CFD(X_K^{[n]})$ slightly to ensure that that we always work with bounded type $D$ modules. Specifically, if $K$ is not an $L$-space knot and $t \le 0$ we replace the unstable chain with
$$\begin{cases}
\xi_0 \overset{D_1}\longrightarrow \nu_1 \overset{D_\emptyset}\longleftarrow \nu_2 \overset{D_2}\longrightarrow \eta_0 & t=0 \\
\xi_0 \overset{D_{12}}\longrightarrow \nu_1 \overset{D_\emptyset}\longleftarrow \nu_2 \overset{D_3}\longrightarrow \mu_1 \overset{D_{23}}\longrightarrow \cdots \overset{D_{23}}\longrightarrow \mu_t \overset{D_2}\longrightarrow \eta_0 & t < 0
\end{cases}$$
This modification does not change the quasi-isomorphism type of $\CFD(X_K^{[n]})$. We also note that this modification does not impact any of the arguments in Section \ref{sec:main_proof}, since we will only consider generators away from the unstable chain unless $K$ is an $L$-space knot.

To see that $\CFD(X_K^{[n]})$ is bounded after modifying the unstable chain, recall that  a type $D$ module is bounded if the corresponding directed graph has no directed loops. Any loop in the graph corresponding to $\CFD(X_K^{[n]})$ is a collection of horizontal, vertical, and unstable chains. No directed loop may traverse a vertical chain, since it has arrows oriented in both directions. A directed loop could contain horizontal chains, but it must traverse all horizontal chains in the same direction. Since horizontal chains raise the Alexander grading, there can not be a directed loop consisting of only horizontal chains. Thus any loop must involve the unstable chain. For a non $L$-space knot, the above modification ensures that the unstable chain has arrows oriented in both directions, and so $\CFD(X_K^{[n]})$ has no directed loops. For an $L$-space knot, $\CFD(X_K^{[n]})$ has a special form (which will be described in Section \ref{sec:Lspace_knots}). The corresponding graph has only one loop, which contains the vertical chains and thus is not a directed loop.

\subsection{L-space knots}
\label{sec:Lspace_knots} We say that a knot $K$ in an $L$-space homology sphere $Y$ is an \emph{$L$-space knot} if some nontrivial surgery on $K$ produces an $L$-space. If $K$ is an $L$-space knot then the knot Floer homology of $K$ has a particularly simple form. It follows from \cite[Theorem 1.2]{OzSz:lens_surgeries} that there is a basis $\{\tilde{x}_0, \ldots, \tilde{x}_{2k}\}$ for $C^-$ such that
$$A(\tilde{x}_0) < \cdots < A(\tilde{x}_{2k})$$
and $A(\tilde{x}_i) = -A(\tilde{x}_{2k-i})$. Furthermore, if $K$ admits a positive $L$-space surgery, then this basis satisfies
$$\begin{cases}
\partial \tilde{x}_i = 0 & \text{if $i$ is even} \\
\partial \tilde{x}_i = \tilde{x}_{i-1} + U^{A(\tilde{x}_{i+1}) - A(\tilde{x}_i)} \tilde{x}_{i+1} & \text{if $i$ is odd}.
\end{cases}$$
If instead $K$ admits a negative $L$-space surgery, then the basis satisfies
$$\begin{cases}
\partial \tilde{x}_i = 0 & \text{if $i$ is odd} \\
\partial \tilde{x}_i = \tilde{x}_{i-1} + U^{A(\tilde{x}_{i+1}) - A(\tilde{x}_i)} \tilde{x}_{i+1} & \text{if $0<i<2k$ is even}\\
\partial \tilde{x}_0 = U^{A(\tilde{x}_{1}) - A(\tilde{x}_0)} \tilde{x}_{1} & \\
\partial \tilde{x}_{2k} = \tilde{x}_{2k-1} &
\end{cases}$$
A basis of this form gives rise to the staircase shape pictured in Figure \ref{stairstep}. It is clear that in either case the basis $\{\tilde{x}_0, \ldots, \tilde{x}_{2k}\}$ is both horizontally and vertically simplified.

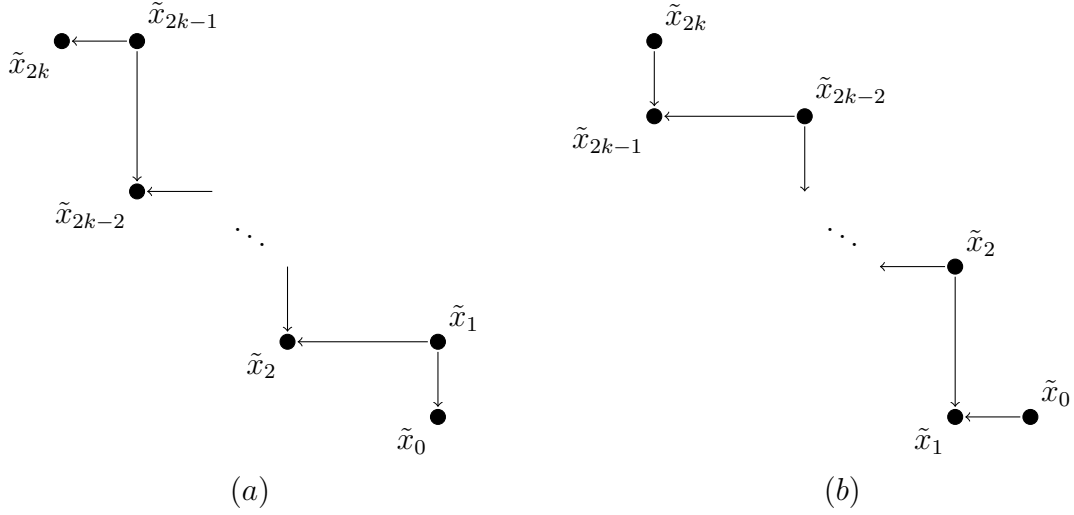
\begin{figure}
\begin{tikzpicture}
\node[circle, draw, inner sep = 2pt, outer sep = 1pt, fill = black] (x0) at (0,0) {};
\node[circle, draw, inner sep = 2pt, outer sep = 1pt, fill = black] (x1) at (0,1) {};
\node[circle, draw, inner sep = 2pt, outer sep = 1pt, fill = black] (x2) at (-2, 1) {};

\node[circle, draw, inner sep = 2pt, outer sep = 1pt, fill = black] (x4) at (-4, 3) {};
\node[circle, draw, inner sep = 2pt, outer sep = 1pt, fill = black] (x5) at (-4, 5) {};
\node[circle, draw, inner sep = 2pt, outer sep = 1pt, fill = black] (x6) at (-5,5) {};

\node[below left] at (x0) {$\tilde{x}_0$};
\node[above right] at (x1) {$\tilde{x}_1$};
\node[below left] at (x2) {$\tilde{x}_2$};
\node[below left] at (x4) {$\tilde{x}_{2k-2}$};
\node[above right] at (x5) {$\tilde{x}_{2k-1}$};
\node[below left] at (x6) {$\tilde{x}_{2k}$};

\draw[->] (x1) to (x0);
\draw[->] (x1) to (x2);
\draw[->] (-2,2) to (x2);
\draw[->] (x5) to (x4);
\draw[->] (x5) to (x6);
\draw[->] (-3,3) to (x4);
\node at (-2.5, 2.5) {$\ddots$};
\node at (-2.5, -1) {$(a)$};
\end{tikzpicture} \qquad
\begin{tikzpicture}
\node[circle, draw, inner sep = 2pt, outer sep = 1pt, fill = black] (x0) at (0,0) {};
\node[circle, draw, inner sep = 2pt, outer sep = 1pt, fill = black] (x1) at (-1,0) {};
\node[circle, draw, inner sep = 2pt, outer sep = 1pt, fill = black] (x2) at (-1, 2) {};

\node[circle, draw, inner sep = 2pt, outer sep = 1pt, fill = black] (x4) at (-3, 4) {};
\node[circle, draw, inner sep = 2pt, outer sep = 1pt, fill = black] (x5) at (-5, 4) {};
\node[circle, draw, inner sep = 2pt, outer sep = 1pt, fill = black] (x6) at (-5,5) {};

\node[above right] at (x0) {$\tilde{x}_0$};
\node[below left] at (x1) {$\tilde{x}_1$};
\node[above right] at (x2) {$\tilde{x}_2$};
\node[above right] at (x4) {$\tilde{x}_{2k-2}$};
\node[below left] at (x5) {$\tilde{x}_{2k-1}$};
\node[above right] at (x6) {$\tilde{x}_{2k}$};

\draw[<-] (x1) to (x0);
\draw[<-] (x1) to (x2);
\draw[<-] (-2,2) to (x2);
\draw[<-] (x5) to (x4);
\draw[<-] (x5) to (x6);
\draw[<-] (-3,3) to (x4);
\node at (-2.5, 2.5) {$\ddots$};
\node at (-2.5, -1) {$(b)$};
\end{tikzpicture}

\caption{A fundamental domain of $C^\infty$ for an $L$-space knot $K$ with $(a)$ $\tau(K) >0$, or $(b)$ $\tau(K) < 0$. The nodes represent the generators $\tilde{x}_0, \ldots, \tilde{x}_{2k}$ multiplied by appropriate powers of $U$, which are omitted from the diagram for simplicity. The node labelled $\tilde{x}_i$ is in fact $U^{A(\tilde{x}_i)-A(\tilde{x}_0)} \tilde{x}_i$, an element of $C^-$. }
\label{stairstep}
\end{figure}

Using this basis, it is straightforward to compute $\CFD$ for a framed complement $X_K^{[n]}$ of an $L$-space knot. $\iota_0 \CFD(X_K^{[n]})$ has basis $\{x_0, \ldots, x_{2k}\}$. For each horizontal arrow from $\tilde{x}_i$ to $\tilde{x}_{i+1}$ of length $\ell_i = A(\tilde{x}_{i+1}) - A(\tilde{x}_i)$ there is a horizontal chain
$$x_i \overset{D_3}{\longrightarrow}y^i_1 \overset{D_{23}}{\longrightarrow} \cdots \overset{D_{23}}{\longrightarrow} y^i_{\ell_i} \overset{D_2}{\longrightarrow}x_{i+1}, $$
and for each vertical arrow from $\tilde{x}_{i+1}$ to $\tilde{x}_i$ of length $\ell_i = A(\tilde{x}_{i+1}) - A(\tilde{x}_i)$ there is a vertical chain
$$x_{i+1} \overset{D_1}{\longrightarrow}y^i_1 \overset{D_{23}}{\longleftarrow} \cdots \overset{D_{23}}{\longleftarrow} y^i_{\ell_i} \overset{D_{123}}{\longleftarrow}x_i. $$
Finally, there is an unstable chain from $x_{2k}$ to $x_0$ if $\tau(K)>0$ and from $x_0$ to $x_{2k}$ if $\tau(K)<0$. Let $\ell_{2k} = | n - 2\tau(K) |$ be the length of the unstable chain. We label the generators of $\iota_1 \CFD(X_K^{[n]})$ in the unstable chain sequentially as $y^{2k}_1, \ldots, y^{2k}_{\ell_{2k}}$.

\

\section{Proof of Main Thoerem}
\label{sec:main_proof}

\subsection{Durable generators}
\label{sec:durable_generators}

Following the strategy of \cite{splicing}, we will search for special generators in $\CFD$ and $\CFA$ that give rise to generators in the homology of the box tensor product. 

\begin{definition}
\label{def:durable_generators}
Let $Y$ be a manifold with torus boundary. We call a generator $\x \in \iota_0 \CFD(Y)$ \emph{durable} if it satisfies the following conditions:
\begin{itemize}
\item $\x$ has no incoming coefficient maps; that is, $\pi_\x \circ D_I = 0$ for any $I$, where $\pi_\x$ denotes projection onto the subspace generated by $\x$.
\item If $D_{I_r} \circ \cdots \circ D_{I_1}(\x)$ is nonzero, then
\begin{itemize}
\item $I_1 = 3$ or $I_1 = 123$,
\item if $I_1 = 123$ and $r > 1$, then $I_2 = 23$,
\item if $I_1 = 3$ and $r > 1$, then $I_2 = 23$ or $I_2 = 2$,
\item if $I_2 = 2$ and $r > 2$, then $I_3 = 123$.
\end{itemize}
\end{itemize}
We call a generator $\x \in \iota_1 \CFD(Y)$ \emph{durable} if it satisfies the following:
\begin{itemize}
\item If $\pi_\x \circ D_{I_r} \circ \cdots \circ D_{I_1}$ is nonzero, then $r=1$ and $I_1 = 1$ or $I_1 = 123$.
\item If $D_{I_r} \circ \cdots \circ D_{I_1}(\x)$ is nonzero, then $I_1 = 23$.
\end{itemize}

\end{definition}

\begin{remark}
These are precisely the properties demonstrated for generators in the subspaces $B_K$ and $V_K$ in Propositions 3.5 and 3.6 of \cite{splicing}.
\end{remark}

When $\CFA$ is computed from $\CFD$ using the algorithm in \cite[Section 2.3]{splicing}, there is a direct correspondence between the generators. We define generators of $\CFA$ to be durable if they correspond to durable generators of $\CFD$. It is easy to see that this is equivalent to the following conditions (c.f. Propositions 3.7 and 3.8 in \cite{splicing}):

\begin{prop}
A durable generator $\x \in \iota_0 \CFA(Y)$ satisfies the following:
\begin{itemize}
\item There are no $\Ainfty$ operations which evaluate to $\x$, except the identity operation $m_2(\x, {1}) = \x$.
\item If $m_{r+1}(\x, a_1, \ldots, a_r)$ is nonzero for Reeb chords $a_1, \ldots, a_r$, then
	\begin{itemize}
	\item $a_1 = \rho_1, \rho_3$, or $\rho_{123}$,
	\item if $a_1 = \rho_{123}$, then $r \ge 2$ and $a_2 = \rho_2$,
	\item if $a_1 = \rho_{3}$, then $r \ge 3$, $a_2 = \rho_2$, and $a_3 = \rho_1$ or $\rho_{12}$.
	\end{itemize}
\end{itemize}
A durable generator $\x \in \iota_1 \CFA(Y)$ satisfies the following:
\begin{itemize}
\item If $m_{r+1}(\y, a_1, \ldots, a_r) = \x$ for some generator $\y \in \CFA(Y)$ and Reeb chords $a_1, \ldots, a_r$, then either $r = 1$ and $a_1 = \rho_3$ or $r = 3$ and $(a_1, a_2, a_3) = (\rho_3, \rho_2, \rho_1)$.
\item If $m_{r+1}(\x, a_1, \ldots, a_r)$ is nonzero for Reeb chords $a_1, \ldots, a_r$, then $a_1 = \rho_2$.
\end{itemize}

\end{prop}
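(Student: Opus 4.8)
The plan is to translate the four defining conditions for a durable generator of $\CFD$ directly through the algorithm of \cite[Section 2.3]{splicing}, which converts a type $D$ module into a homotopy equivalent type $A$ module. Recall that in that algorithm the underlying set of generators is unchanged, and an $\Ainfty$ operation $m_{r+1}(\bar\x, \rho_{I_1}, \ldots, \rho_{I_r}) = \bar\y$ in $\CFA$ arises precisely from a directed path $\x \overset{D_{I_1}}{\longrightarrow} \cdots \overset{D_{I_r}}{\longrightarrow} \y$ in the graph of $\CFD$ in which consecutive Reeb chords are composed in the algebra whenever possible — so a chain of coefficient maps labelled $D_{J_1}, \ldots, D_{J_s}$ contributes an operation whose input algebra elements are obtained by \emph{grouping} the $J_m$ into maximal runs that multiply to a nonzero Reeb chord. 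Concretely, $D_3$ followed by $D_{23}$'s followed by $D_2$ does \emph{not} collapse (since $\rho_3 \rho_{23} = 0$, etc.), so each such coefficient map stays as its own algebra input; but a $D_1$ at the start followed by $D_{23}$ collapses via $\rho_1\rho_{23} = \rho_{123}$, a $D_{12}$ followed by $D_3$ collapses via $\rho_{12}\rho_3 = \rho_{123}$, and so on. This grouping dictionary is the engine of the whole proof.

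First I would handle the $\iota_0$ case. The condition that $\x$ has no incoming coefficient maps in $\CFD$ becomes, under the algorithm, the statement that no nontrivial $\Ainfty$ operation evaluates to $\bar\x$ (the only operation landing on $\bar\x$ is $m_2(\bar\x,{\bf 1}) = \bar\x$), since every $\Ainfty$ operation hitting $\bar\x$ comes from a coefficient path ending at $\x$. Next, take an arbitrary nonzero operation $m_{r+1}(\bar\x, \rho_{I_1}, \ldots, \rho_{I_r})$; it comes from a nonzero chain $D_{J_s}\circ\cdots\circ D_{J_1}$ starting at $\x$, where $(\rho_{I_1},\ldots,\rho_{I_r})$ is the grouping of $(\rho_{J_1},\ldots,\rho_{J_s})$. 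By the durable hypothesis $J_1 \in \{3, 123\}$. If $J_1 = 123$, then either $s=1$, giving $m_2(\bar\x,\rho_{123})$, or $s>1$ and $J_2 = 23$; since $\rho_{123}\rho_{23} = 0$ the chord $\rho_{123}$ does not absorb $\rho_{23}$, so $a_1 = \rho_{123}$ stays as a separate input and $r \ge 2$, $a_2 = \rho_2$ or starts a longer run beginning with $\rho_{23}$ — here I need to check that the subsequent $D_{23}$'s and the structure force $a_2 = \rho_2$ (this is exactly the $\iota_1$-durable propagation: after the first $D_{23}$ we are at a $\iota_1$-durable generator, whose only outgoing map is $D_{23}$, so the run of $23$'s continues and then must exit; the exit options allowed by the $\iota_0$ definition are nothing further, hence the run of $\rho_{23}$'s contributes... here I have to be careful about how $\rho_{23}$ inputs are grouped — $\rho_{23}\rho_{23}=0$ so they do \emph{not} merge, each is a $\rho_2$-starting... actually $\rho_{23}$ is itself an input, so I will need to reconcile this with the claim that $a_2 = \rho_2$; the resolution is that in the \cite{splicing} conventions these paths are read off so that a maximal $D_1 D_{23}\cdots D_{23} D_{123}$ or $D_3 D_{23}\cdots D_{23} D_2$ pattern produces inputs $\rho_2, \rho_2, \ldots$, and I would cite the precise bookkeeping of \cite[Section 2.3]{splicing}). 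Similarly, if $J_1 = 3$ then $s \ge 3$ (using the third and fourth bullets: $D_3$ is followed by $D_{23}$ or $D_2$, and $D_2$ by $D_{123}$, so the chain cannot terminate after one or two steps and must contain at least the pattern $D_3, D_2, D_{123}$ or $D_3, D_{23}, \ldots$), and the grouping yields $a_1 = \rho_3$, $a_2 = \rho_2$, and $a_3 = \rho_1$ (from $D_{123}$ read as $\rho_1$ at the end of a $\rho_2$-run, i.e.\ absorbing nothing) or $\rho_{12}$ (from $D_2$ then $D_{123}$ collapsing... no — rather from a $D_2$ not followed by a further $D$, grouped with a trailing piece). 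I would spell out these few small cases against the multiplication table.

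For the $\iota_1$ case the argument is shorter. A durable $\x \in \iota_1\CFD$ has the property that any nonzero chain $\pi_\x \circ D_{I_r}\circ\cdots\circ D_{I_1}$ reaching $\x$ has $r = 1$ with $I_1 \in \{1, 123\}$. Under the algorithm such a chain produces an operation $m_{r'+1}(\bar\y, a_1,\ldots,a_{r'}) = \bar\x$; I claim the relevant chains are exactly the maximal ones ending at $\x$, namely $D_3$ (a single $\rho_3$, giving $r'=1$, $a_1 = \rho_3$) or $D_1$ preceded by $D_2$'s preceded by $D_3$, which regroups — using $\rho_3\rho_2$ not defined but $\rho_2\rho_3=\rho_{23}$... here I must be careful: the incoming pattern at an $\iota_1$-durable generator, combined with the $\iota_0$-durable outgoing patterns, forces the only chains landing on $\x$ to be $D_3$ or $D_3 D_2 D_1$, and these regroup to $(\rho_3)$ or $(\rho_3,\rho_2,\rho_1)$ — giving the stated $r=1$ or $r=3$ alternatives. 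Dually, the condition that every nonzero chain out of $\x$ starts with $D_{23}$ gives that every nonzero $m_{r+1}(\bar\x, a_1,\ldots,a_r)$ has $a_1$ equal to the first grouped chord of a $D_{23}$-initial chain, which is $\rho_2$ (since $\rho_{23}$ is read as beginning a $\rho_2$-run in the \cite{splicing} conventions, or more plainly because $D_{23}$ cannot be preceded by anything at $\x$ and a run $D_{23}\cdots$ regroups with inputs starting $\rho_2$). The main obstacle, and the only place requiring genuine care rather than bookkeeping, is getting the grouping/regrouping conventions of \cite[Section 2.3]{splicing} exactly right — in particular matching how a coefficient-map chain like $D_3, D_{23}, \ldots, D_{23}, D_2, D_{123}$ is parsed into a tuple of Reeb chords $(\rho_3, \rho_2, \ldots, \rho_2, \rho_1)$ versus $(\rho_3, \rho_{23}, \ldots)$; once that dictionary is pinned down, each clause of the proposition is a one-line check against the torus algebra multiplication table, entirely parallel to Propositions 3.7 and 3.8 of \cite{splicing}, so I would present the proof as: state the regrouping dictionary, then verify each bullet.
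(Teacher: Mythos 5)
Your overall strategy is the one the paper intends (the paper treats this as immediate from the $\CFD\to\CFA$ algorithm of \cite[Section 2.3]{splicing}, citing Propositions 3.7 and 3.8 there), but the execution has a genuine gap: the translation dictionary you state is not the right one, and since the entire proof is exactly this bookkeeping, the argument does not go through as written. You assert that an operation $m_{r+1}(\bar\x,\rho_{I_1},\ldots,\rho_{I_r})$ comes from a chain of coefficient maps carrying \emph{the same labels} $D_{I_1},\ldots,D_{I_r}$, possibly grouped by multiplying adjacent labels. In fact the correspondence involves the mirror coming from the orientation reversal between type $A$ and type $D$ boundary parametrizations (compare the two sides of Figure \ref{fig:alpha_arcs}): a coefficient map $D_1(\x)=\y$ corresponds to $m_2(\bar\x,\rho_3)=\bar\y$ and $D_3(\x)=\y$ to $m_2(\bar\x,\rho_1)=\bar\y$; a single $D_{12}$ corresponds to $m_3(\cdot,\rho_3,\rho_2)$, a single $D_{123}$ to $m_4(\cdot,\rho_3,\rho_2,\rho_1)$, the chain $D_2\circ D_3$ to $m_2(\cdot,\rho_{12})$, and $D_{12}\circ D_2$ to $m_3(\cdot,\rho_{23},\rho_2)$. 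All of these instances are used explicitly later in the paper (Section \ref{sec:boundary_case} and Tables \ref{table:lower_left_typeD}, \ref{table:lower_left_typeA}), so they are the conventions against which the proposition must be checked.

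With your identity-label rule the proposition would actually come out false, which shows the gap is not cosmetic: a durable generator $\x\in\iota_0\CFD(Y)$ may have a lone outgoing $D_3$, which under your rule yields $m_2(\bar\x,\rho_3)$, contradicting the clause that $a_1=\rho_3$ forces $r\ge 3$ and $a_2=\rho_2$; a lone outgoing $D_{123}$ would yield $m_2(\bar\x,\rho_{123})$, contradicting the clause that $a_1=\rho_{123}$ forces $r\ge 2$ and $a_2=\rho_2$. Under the correct dictionary these chains instead give first input $\rho_1$ and the sequence $(\rho_3,\rho_2,\rho_1)$ respectively, which is exactly why the stated conditions take the form they do; likewise in the $\iota_1$ case the incoming $D_1$ and $D_{123}$ maps become $m_2(\cdot,\rho_3)$ and $m_4(\cdot,\rho_3,\rho_2,\rho_1)$, and an initial $D_{23}$ becomes inputs beginning with $\rho_2$. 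Your own text flags this problem several times (the attempts to reconcile $\rho_{23}$ inputs with ``$a_2=\rho_2$,'' the aside about $\rho_3\rho_2$, the deferral to ``the precise bookkeeping of \cite[Section 2.3]{splicing}''), but it never resolves it. To repair the proof you need to first state the correct chain-to-operation dictionary (including the $\rho_1\leftrightarrow\rho_3$ mirror, the expansion of $\rho_{12}$ and $\rho_{123}$ into reversed elementary chords, and how consecutive arrows' contributions combine at the junctions), and only then run the case check against Definition \ref{def:durable_generators}; as it stands, the central step is both misstated and left to the reference.
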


Given these conditions, it is straightforward to check the following (c.f. \cite[Proof of Theorem 1]{splicing}):
\begin{prop}
\label{prop:tensoring_durable_generators}
If $\x$ is a durable generator of $\CFA(Y_1)$ and $\y$ is a durable generator of $\CFD(Y_2)$ such that $\x$ and $\y$ have the same idempotent, then $\x \otimes \y$ is a generator of $\CFA(Y_1) \boxtimes \CFD(Y_2)$ with no incoming or outgoing differentials. Thus, $\x \otimes \y$ survives as a generator of $\HFhat( Y_1 \cup Y_2 )$.
\end{prop}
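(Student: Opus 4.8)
The plan is to expand the box tensor differential and establish two facts: first, that $\x\otimes\y$ has no outgoing differential, i.e.
\[ \partial^\boxtimes(\x\otimes\y) = \sum_{I_1,\dots,I_r\in\mathcal{R}} m_{r+1}(\x,\rho_{I_1},\dots,\rho_{I_r})\otimes (D_{I_r}\circ\cdots\circ D_{I_1})(\y) = 0; \]
second, that $\x\otimes\y$ never occurs as a summand of $\partial^\boxtimes(\x'\otimes\y')$ for any generator $\x'\otimes\y'$ of $\CFA(Y_1)\boxtimes\CFD(Y_2)$. Granting both, $\x\otimes\y$ is a cycle, and the $(\x\otimes\y)$-coefficient of any boundary $\partial^\boxtimes z$ is a sum of such coefficients over the generators appearing in $z$, hence is zero; so $\x\otimes\y$ is not a boundary. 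It therefore represents a nonzero homology class that can be completed to a basis, and the pairing theorem $\CFA(Y_1)\boxtimes\CFD(Y_2)\cong\CFhat(Y_1\cup Y_2)$ (valid since one of the two modules is bounded) identifies this with a surviving generator of $\HFhat(Y_1\cup Y_2)$. Note $\x\otimes\y$ is a genuine generator because $\x$ and $\y$ carry the same idempotent, and the argument proceeds by cases on whether that idempotent is $\iota_0$ or $\iota_1$.

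For the outgoing differential, a summand above is nonzero only if both $m_{r+1}(\x,\rho_{I_1},\dots,\rho_{I_r})\ne 0$ and $(D_{I_r}\circ\cdots\circ D_{I_1})(\y)\ne 0$, so I would play the two lists of durability constraints against one another. If the common idempotent is $\iota_1$: the characterization of durable generators forces $I_1 = 2$ from the $\CFA$ side but $I_1 = 23$ from the $\CFD$ side, a contradiction, so there is no such term. If it is $\iota_0$: the $\CFD$ side forces $I_1\in\{3,123\}$. When $I_1 = 123$, the $\CFA$ side gives $r\ge 2$ with $I_2 = 2$ while the $\CFD$ side gives $I_2 = 23$; when $I_1 = 3$, the $\CFA$ side gives $r\ge 3$, $I_2 = 2$, and $I_3\in\{1,12\}$, while the $\CFD$ side (having begun with $D_3$ then $D_2$, with $r>2$) forces $I_3 = 123$. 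Either way we reach a contradiction, so $\partial^\boxtimes(\x\otimes\y)=0$. (The length-zero term $m_1(\x)\otimes\y$ is excluded because a durable generator emits no length-zero operation; away from the unstable chains, where the durable generators live, the relevant modules are reduced.)

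For incoming differentials, $\x\otimes\y$ can appear in $\partial^\boxtimes(\x'\otimes\y')$ only if there is a single sequence $(I_1,\dots,I_r)$ for which $\x$ is a summand of $m_{r+1}(\x',\rho_{I_1},\dots,\rho_{I_r})$ and $\y$ is a summand of $(D_{I_r}\circ\cdots\circ D_{I_1})(\y')$. If the common idempotent is $\iota_0$, durability of $\x$ says the only $\Ainfty$ operation evaluating to it is the identity $m_2(\x,\mathbf{1}) = \x$, which is not among the terms of $\partial^\boxtimes$ (the empty index is excluded from $\mathcal{R}$, and $m_1(\x')$ cannot have $\x$ as a summand), so nothing can hit $\x\otimes\y$. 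If the common idempotent is $\iota_1$, durability of $\x$ forces the sequence to be $(3)$ or $(3,2,1)$, whereas durability of $\y$ forces $r = 1$ with $I_1\in\{1,123\}$; these are incompatible. In all cases $\x\otimes\y$ has no incoming differential, completing the proof.

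The argument is purely combinatorial once the durability conditions recorded above are in hand; no geometry remains. The only delicate point is the $\iota_0$ outgoing case, where one must follow the forced string of Reeb labels out to length three before the type $A$ and type $D$ constraints become incompatible — in every other case the contradiction already appears at the first label — and one must keep in mind that the box tensor sum ranges over nonempty consecutive-integer sequences, which is precisely what makes the "identity operation" clause of durability do its job against the incoming differentials.
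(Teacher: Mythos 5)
Your proposal is correct and follows essentially the same route the paper intends (the paper leaves this as a "straightforward to check" verification, c.f.\ the proof of Theorem 1 in \cite{splicing}): expand $\partial^\boxtimes$ and pit the type $A$ durability constraints against the type $D$ ones term by term, finding in each case an incompatible forced Reeb label, with the $\iota_0$ outgoing case requiring the chase out to the third label. Your parenthetical remarks on the excluded $m_1$ term and the identity operation are exactly the right bookkeeping and consistent with the modules actually used in the paper.
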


We will also make use of a weaker condition on generators.
\begin{definition}
Let $Y$ be a manifold with torus boundary. We call a generator $\x \in \iota_0 \CFD(Y)$ \emph{weakly durable} if
$$0 = D_1(\x) = D_{12}(\x) = D_2 \circ D_{123}(\x) = D_1 \circ D_2 \circ D_3(\x) = D_{12} \circ D_2 \circ D_3(\x).$$
We call a generator $\x \in \iota_1 \CFD(Y)$ \emph{weakly durable} if $D_2(\x) = 0$ and $\pi_\x \circ D_3$ and 
$\pi_\x\circ D_1\circ D_2\circ D_3$ are trivial.
\end{definition}
The trivial chains of coefficient maps in this definition are chosen precisely to match the nontrivial $\Ainfty$ operations for a durable generator. Thus the statement in Proposition \ref{prop:tensoring_durable_generators} remains true if the generator $\y$ in $\CFD(Y_2)$ is only weakly durable.

We will find that many framed knot complements have a pair of durable generators connected by the coefficient map $D_{123}$, and that all framed knot complements have such a pair of weakly durable generators. This leads to a simple proof that certain splicings are not $L$-spaces using the following proposition.

\begin{prop}
\label{prop:durable_tensor_durable}
Let $Y_1$ and $Y_2$ be bordered 3-manifold with torus boundary. Suppose that $CFD(Y_1)$ has two durable generators $\x_1$ and $\y_1 = D_{123}(\x_1)$, and that $\CFD(Y_2)$ has two weakly durable generators $\x_2$ and $\y_2 = D_{123}(\x_2)$. Then $Y_1 \cup Y_2$ is not an $L$-space.
\end{prop}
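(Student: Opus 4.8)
The plan is to produce at least two generators in $\HFhat(Y_1 \cup Y_2)$ that survive in homology but carry different $\Z_2$ gradings; by Proposition \ref{prop:Z2_grading_Lspace} this shows $Y_1 \cup Y_2$ is not an $L$-space. To this end I will convert $\CFD(Y_1)$ to $\CFA(Y_1)$ via the algorithm of \cite[Section 2.3]{splicing}, so that $\x_1$ becomes a durable generator $\bar\x_1 \in \CFA(Y_1)$ and $\y_1 = D_{123}(\x_1)$ becomes a durable generator $\bar\y_1 \in \CFA(Y_1)$ in the complementary idempotent. The two candidate generators of the box tensor product are then $\bar\x_1 \otimes \y_2$ and $\bar\y_1 \otimes \x_2$; note these are well-defined since $\x_1$ and $\y_2 = D_{123}(\x_2)$ share idempotent $\iota_0$ (as $\rho_{123}$ carries $\iota_0$ to $\iota_1$, so $\y_2 \in \iota_1\CFD$... — here I need to be careful: $\x_1 \in \iota_0$ pairs with $\y_2 \in \iota_1$? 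No: in the box tensor product the idempotents must agree, so $\bar\x_1 \in \iota_0\CFA$ pairs with a generator in $\iota_0\CFD$. Rechecking: $\x_1 \in \iota_0\CFD(Y_1)$, hence $\bar\x_1 \in \iota_0\CFA(Y_1)$, and it pairs with $\x_2 \in \iota_0\CFD(Y_2)$; similarly $\y_1 \in \iota_1\CFD(Y_1)$ gives $\bar\y_1 \in \iota_1\CFA(Y_1)$, pairing with $\y_2 \in \iota_1\CFD(Y_2)$.) So the two candidates are $\bar\x_1 \otimes \x_2$ and $\bar\y_1 \otimes \y_2$.

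First I would show both candidates survive in homology. Since $\x_1, \y_1$ are durable in $\CFD(Y_1)$, the corresponding $\bar\x_1, \bar\y_1$ are durable in $\CFA(Y_1)$. Since $\x_2, \y_2$ are weakly durable in $\CFD(Y_2)$, the extension of Proposition \ref{prop:tensoring_durable_generators} to weakly durable type-$D$ generators (noted in the paragraph following the definition of weak durability) applies: $\bar\x_1 \otimes \x_2$ and $\bar\y_1 \otimes \y_2$ each have no incoming or outgoing $\partial^\boxtimes$ differential, hence descend to nonzero classes in $\HFhat(Y_1 \cup Y_2)$. So $\text{rk}\,\HFhat(Y_1 \cup Y_2) \ge 2$.

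Next I would compute the $\Z_2$ gradings. Using $gr(\x \otimes \y) = gr(\x) + gr(\y)$, and the fact that the $\CFA$ grading is obtained from the $\CFD$ grading by flipping the grading of each $\iota_0$ generator, I get $gr(\bar\x_1) = gr(\x_1) + 1$ (since $\x_1 \in \iota_0$) and $gr(\bar\y_1) = gr(\y_1)$ (since $\y_1 \in \iota_1$). The coefficient map relation $\y_1 = D_{123}(\x_1)$ together with Equations \eqref{eq:Z2_property1} and the observation that $D_{123}$ is the $\rho_{123}$-component of $\delta_1$ gives $gr(\y_1) \equiv gr(\x_1) + gr(\rho_{123}) + 1 \equiv gr(\x_1) + 1 + 1 \equiv gr(\x_1) \pmod 2$ — wait, I must track the $+1$ from $\delta_1$ carefully via \eqref{eq:Z2_property2}; the net effect is $gr(\y_1) \equiv gr(\x_1) + gr(\rho_{123}) \equiv gr(\x_1) + 1$. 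Similarly $gr(\y_2) \equiv gr(\x_2) + 1$. Then
\[
gr(\bar\y_1 \otimes \y_2) - gr(\bar\x_1 \otimes \x_2) \equiv \big(gr(\y_1) + gr(\y_2)\big) - \big(gr(\x_1) + 1 + gr(\x_2)\big) \equiv (gr(\x_1)+1) + (gr(\x_2)+1) - gr(\x_1) - 1 - gr(\x_2) \equiv 1 \pmod 2,
\]
so the two surviving generators have distinct $\Z_2$ gradings. By Proposition \ref{prop:Z2_grading_Lspace}, $Y_1 \cup Y_2$ is not an $L$-space.

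The main obstacle I expect is bookkeeping the parity shifts correctly: the $+1$ appearing when passing through $\delta_1$ in \eqref{eq:Z2_property2}, the grading $gr(\rho_{123}) = 1$, and the flip between the $\iota_0$ parts of $\CFD$ and $\CFA$ all interact, and it is easy to drop or double-count a $+1$. I would therefore state the grading computation as a short lemma relating $gr(\x)$ and $gr(D_{123}(\x))$ for adjacent generators, verify it once against the definitions, and then apply it uniformly to both $Y_1$ and $Y_2$. A secondary point requiring care is confirming that weak durability of $\y_2$ (not just $\x_2$) is genuinely what is needed on the type-$D$ side when it is the \emph{target} of $D_{123}$ rather than the source — but this is exactly the content of the weak-durability conditions in $\iota_1\CFD$, matched to the $\Ainfty$ operations emanating from a durable $\iota_1\CFA$ generator, so Proposition \ref{prop:tensoring_durable_generators} applies as stated.
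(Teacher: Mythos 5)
Your proposal follows essentially the same route as the paper: pair $\bar\x_1\otimes\x_2$ and $\bar\y_1\otimes\y_2$, use Proposition \ref{prop:tensoring_durable_generators} (in its weakly durable extension) to see both survive in homology, and then show they have opposite $\Z_2$ gradings so that Proposition \ref{prop:Z2_grading_Lspace} applies. One bookkeeping slip: the relation you finally settle on, $gr(D_{123}(\x))\equiv gr(\x)+1$, is wrong; since $gr(\rho_{123})=1$ and $\delta_1$ raises the grading by $1$, Equations \eqref{eq:Z2_property1} and \eqref{eq:Z2_property2} give $gr(D_{123}(\x))=gr(\x)$ (your first, discarded computation was the right one), and this is what the paper uses, locating the entire asymmetry in the $\iota_0$ flip when passing from $\CFD(Y_1)$ to $\CFA(Y_1)$. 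Fortunately your error is harmless here: whatever constant shift $c$ you assign to $D_{123}$, it enters once through $\y_1$ and once through $\y_2$, so the difference of gradings is $2c-1\equiv 1 \pmod 2$ and your conclusion stands; still, the lemma as you stated it should be corrected before use elsewhere (e.g.\ it would give the wrong gradings in the explicit computations for $L$-space knot complements).
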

\begin{proof}
Let $\bar\x_1$ and $\bar\y_1$ denote the generators in $\CFA(Y_1)$ corresponding to $\x_1$ and $\y_1$, respectively. $\bar\x_1 \otimes \x_2$ and $\bar\y_1 \otimes \y_2$ are generators of $\CFhat(Y_1 \cup Y_2) \cong \CFA(Y_1) \boxtimes \CFD(Y_2)$ that survive in homology. These generators have $\Z_2$ gradings
$$gr(\bar\x_1 \otimes \x_2) = gr(\bar\x_1) + gr(\x_2),$$
$$gr(\bar\y_1 \otimes \y_2) = gr(\bar\y_1) + gr(\y_2).$$
Since $D_{123}(\x_2) = \y_2$, it follows from Equations \eqref{eq:Z2_property1} and \eqref{eq:Z2_property2} that $gr(\x_2) = gr(\y_2)$. Similarly, $gr(\x_1) = gr(\y_1)$. When we compute $\CFA(Y_1)$ from $\CFD(Y_1)$, we change the grading for $\x_1$ but not for $\y_1$. As a result, $gr(\bar\x_1) \neq gr(\bar\y_1)$. This implies that $gr(\bar\x_1 \otimes \x_2) \neq gr(\bar\y_1 \otimes \y_2)$, and by Proposition \ref{prop:Z2_grading_Lspace}, $Y_1 \cup Y_2$ is not an $L$-space.
\end{proof}

\subsection{Durable generators for non-$L$-space knot}
It was shown in \cite{splicing} that for any nontrivial 0-framed knot complement, $\CFD$ has at least two durable generators. The proof relies on the form of the unstable chain and thus does not work for arbitrary framings. However, for non $L$-space knots we can use similar methods to find durable generators that do not lie on the unstable chain. Since the framing only influences the unstable chain, these durable generators exist for arbitrary framing.

Let $K$ be a nontrivial knot in an $L$-space integral homology sphere $Y$. Recall that $C^-$ will denote the knot floer complex $CFK^-(K)$. Choose simplified filtered bases $\{\tilde\xi_0, \ldots, \tilde\xi_{2m }\}$ and $\{\tilde\eta_0, \ldots, \tilde\eta_{2m} \}$ for $C^-$ as in Proposition \ref{prop:nice_basis}. For any $\tilde{a} \in C^-$, there is a corresponding element $a$ in $\iota_0 \CFD(X_K^{[n]})$. Recall that elements of $\iota_0 \CFD(X_K^{[n]})$ inherit an Alexander grading from the corresponding elements in $C^-$.

For a given $-g(K) \le k \le g(K)$, let $B_k$ denote the subspace of $\iota_0 \CFD(X_K^{[n]})$ generated by elements with Alexander grading $k$. Note that each $B_k$ has a basis which is a subset of $\{\xi_0, \ldots, \xi_{2m }\}$ and a basis which is a subset of $\{\eta_0, \ldots, \eta_{2m} \}$. Let $B'_k$ denote the subspace $B_k \cap span\{ \xi_2, \xi_4, \ldots, \xi_{2m}\} \cap span\{\eta_1, \eta_3, \ldots, \eta_{2m-1}\}$. 

\begin{lem}
\label{lem:no_left_down}
If $a \in B'_k$ for some $k$ and $D_I \circ D_2 \circ D_3(a) \ne 0$, then $I = 123$.
\end{lem}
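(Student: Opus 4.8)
### Proof proposal

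The plan is to trace through what the sequence of coefficient maps $D_3$, then $D_2$, then $D_I$ does to an element $a \in B'_k$, using the explicit description of $\CFD(X_K^{[n]})$ in terms of vertical and horizontal chains. The first observation is that $D_3$ and $D_{23}$ only emanate from generators lying on horizontal chains (the $\eta_{2j-1}$ at the start of a chain, or the interior generators $\lambda^j_i$), while $D_2$ only arrives at the tail $\eta_{2j}$ of a horizontal chain or passes between interior generators $\lambda^j_i$. So for $D_2 \circ D_3(a)$ to be nonzero, the $D_3$ must be the map $\eta_{2j-1} \overset{D_3}{\to} \lambda^j_1$ at the start of some horizontal chain of length $\ell_j$, and then $D_2$ must be applied; since $D_2$ out of $\lambda^j_1$ only exists when $\ell_j = 1$ (giving $\lambda^j_1 \overset{D_2}{\to} \eta_{2j}$), we conclude that $D_2 \circ D_3(a) \ne 0$ forces $a$ to have a component on some $\eta_{2j-1}$ with $\ell_j = 1$, and $D_2 \circ D_3$ lands on the corresponding $\eta_{2j}$.

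Here is where the hypothesis $a \in B'_k$ is used. Writing $a = \sum c_q \eta_q$, the condition $a \in \mathrm{span}\{\eta_1, \eta_3, \ldots, \eta_{2m-1}\}$ means only odd-indexed $\eta_q$ appear, so the above analysis applies coherently to each term; $D_2\circ D_3(a)$ is a sum of the even-indexed generators $\eta_{2j}$ over the length-one horizontal chains $j$ with $c_{2j-1}\ne 0$. Now I need to know which coefficient maps $D_I$ emanate from a generator $\eta_{2j}$ that is the \emph{tail} of a horizontal chain. Going through the list of chains in the structure theorem: a generator $\eta_{2j}$ can be the head of another horizontal chain (via $D_3$), it can be the head $\xi_{2j'-1}$ of a vertical chain after a change of basis — but that involves re-expressing in the $\xi$ basis — or it can be the $\eta_0$ at the end of the unstable chain. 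I want to rule out every outgoing $D_I$ except $D_{123}$.

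The key leverage is the second half of the $B'_k$ hypothesis, $a \in \mathrm{span}\{\xi_2, \xi_4, \ldots, \xi_{2m}\}$, combined with the ``nice basis'' Proposition~\ref{prop:nice_basis}: the change of basis between the $\xi$'s and $\eta$'s preserves Alexander grading level by level, and within a fixed Alexander grading $a$ is simultaneously a combination of even-indexed $\xi$'s and odd-indexed $\eta$'s. The generators $\xi_{2j}$ are precisely the \emph{tails} of vertical chains and have no outgoing vertical-chain map at all; the even-indexed $\xi$'s are also the sources of the $D_{123}$ coefficient map $\xi_{2j}\overset{D_{123}}{\to}\kappa^j_{h_j}$. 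I would argue that after the composite $D_2\circ D_3$ is applied, the resulting element $D_2\circ D_3(a)$, re-expressed appropriately, can only admit further outgoing maps of type $D_{123}$: the $D_3$ and $D_{23}$ options are excluded because $D_2\circ D_3(a)$ sits at the tail of a horizontal chain and the horizontal chains strictly raise Alexander grading so the relevant generators are not chain-heads of the right type; the $D_1$, $D_{12}$, $D_{23}$, $D_2$ options are excluded by matching idempotents and the position in the chain structure; and the unstable-chain contributions are controlled by noting $a\in B'_k$ avoids $\xi_0,\eta_0$ (these are the even/odd-index-zero generators, excluded from the spans defining $B'_k$). What remains is $D_{123}$, giving the claim.

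The main obstacle I expect is the bookkeeping across the two bases: $D_2\circ D_3$ is naturally described in the $\eta$ basis, but the subsequent map $D_{123}$ (from $\xi_{2j}$) and the exclusion of spurious maps is naturally described in the $\xi$ basis, so I will need Proposition~\ref{prop:nice_basis} repeatedly to pass between them while keeping the Alexander grading fixed, and I will need to be careful that ``$D_2\circ D_3(a)\ne 0$'' really does pin $a$ down to live on length-one horizontal chains whose tails are well-understood. A secondary subtlety is confirming that the tails $\eta_{2j}$ of length-one horizontal chains are exactly the generators that are heads of vertical chains in the sense needed — i.e. that $\eta_{2j} = \xi_{2j'}$ type identifications behave well — but this follows from the structure theorem together with the nice-basis property.
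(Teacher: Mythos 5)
Your reduction of the problem is the right one: $D_2\circ D_3(a)$ is only nonzero through length-one horizontal arrows, so $c := D_2\circ D_3(a)$ is a combination of tails $\eta_{2i}$ of horizontal chains, and the lemma then amounts to showing that $c$, when re-expressed in the vertical basis, is supported only on $\{\xi_2,\xi_4,\ldots,\xi_{2m}\}$ (so that the only outgoing coefficient maps are the $D_{123}$'s at the ends of vertical chains). But the step where you claim this — ``the $D_1$, $D_{12}$, \ldots options are excluded by matching idempotents and the position in the chain structure,'' and later that the identification of the tails $\eta_{2i}$ with even-index $\xi$'s ``follows from the structure theorem together with the nice-basis property'' — is a genuine gap, and in fact the assertion as you justify it is not true. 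All of $D_1$, $D_3$, $D_{12}$, $D_{123}$ emanate from $\iota_0$ generators, so idempotents distinguish nothing, and Proposition \ref{prop:nice_basis} only says the change of basis between the $\xi$'s and $\eta$'s preserves the Alexander filtration level; it places no constraint on the \emph{parity} of the $\xi$-indices appearing in $\eta_{2i}$. A priori $c$ could have a component on some $\xi_{2j-1}$ (or on $\xi_0$), which would produce an outgoing $D_1$ (or an unstable-chain map), exactly what must be ruled out. Nothing in the chain-level bookkeeping of $\CFD(X_K^{[n]})$ prevents this; the exclusion is a substantive fact about $CFK^-$, not about the structure theorem.

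The missing ingredient is an argument at the level of the knot Floer complex. The paper writes $a=\sum_i a_i\eta_{2i-1}=\sum_i b_i\xi_{2i}$, sets $\tilde b=\sum_i b_i\tilde\xi_{2i-1}\in C^-$, so that $\partial\tilde b=\tilde a+U\beta$, expands $\partial\tilde a$ using the horizontally simplified basis, and then applies $\partial^2\tilde b=0$; dividing by $U$ and setting $U=0$ shows that $\sum_i a'_i\tilde\eta_{2i}$ equals $\partial\beta$ plus terms of Alexander grading at most $k$, modulo $U$. Since the image of the vertical differential mod $U$ lies in the span of $\{\tilde\xi_2,\ldots,\tilde\xi_{2m}\}$ and the components of $c$ all sit in Alexander grading $k+1$, this forces $c_j=0$ unless $j\in\{2,4,\ldots,2m\}$. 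This use of $\partial^2=0$ together with the Alexander-grading bookkeeping (and it is exactly here that the hypothesis that $a$ also lies in the span of the even $\xi$'s is used, through $\tilde b$) is the heart of the lemma; without it, your proof establishes only the easy half (no outgoing $D_3$, and no unstable-chain output from the $\eta$ side since $\eta_0$ does not occur), not the exclusion of $D_1$ and $D_{12}$.
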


Before approaching the general proof of Lemma \ref{lem:no_left_down}, it may be instructive to consider the proof under the simplifying assumption that the bases $\{\tilde\xi_0, \ldots, \tilde\xi_{2m }\}$ and $\{\tilde\eta_0, \ldots, \tilde\eta_{2m} \}$ of $C^-$ are the same up to permutation of the elements. The idea of the proof is the same but there is less notational complexity. Loosely speaking, we must show that if there is a length 1 horizontal arrow starting at $\tilde{a}$ in $C^-$, it is not followed by a downward vertical arrow.

\begin{remark}
It is not known whether $CFK^-(K)$ always admits a simultaneously horizontally and vertically simplified basis as in this simplifying assumption.
\end{remark}

\begin{proof}[Simplified proof of Lemma \ref{lem:no_left_down}]
Under the simplifying assumption, $B'_k$ is generated by elements of the form $\eta_{2i-1} = \xi_{2j}$, with $1\le i, j\le m$. Since coefficient maps are linear, it suffices to prove the statement when $a$ is a basis element. Assume without loss of generality that $a = \eta_1 = \xi_2$. We also assume that the length $\ell_1$ of the horizontal arrow from $\eta_1$ to $\eta_2$ is 1, since otherwise $D_2\circ D_3(a) = 0$. It follows that $D_2\circ D_3(\eta_1) = \eta_2$.

We need to show that $D_I(\eta_2) = 0$ unless $I$ is $123$. Note that $\eta_2 = \xi_j$ for some $j$. It is enough to show that $j \in \{2, 4, \ldots, 2m\}$, since $\eta_2$ has no outgoing horizontal chains, and vertical chains ending at $\xi_j$ only contribute to $D_{123}(\xi_j)$.

Consider the element $\tilde\xi_1$ of $C^-$. By the definition of vertically simplified basis, we have that
$$\partial\tilde\xi_1 = \tilde\xi_2 + U\beta = \tilde\eta_1 + U\beta$$
for some $\beta \in C^-$. Since $\tilde\eta_1 = \tilde\xi_2$ is in the kernel of the vertical differential, $\partial\tilde\eta_1 \in UC^-$. By the definition of horizontally simplified basis,
$$\partial\tilde\eta_1 = U\tilde\eta_2 + U\gamma = U\tilde\xi_j + U\gamma$$
for some $\gamma \in C^-$ with $A(\gamma) \le A(\eta_1) = k$.

Now consider
$$\partial^2 (\tilde\xi_1) = \partial(\tilde\eta_1) + \partial(U\beta) = U\tilde\xi_j + U\gamma + U\partial\beta.$$
Since multiplying by $U$ is injective, we have that $0 = \tilde\xi_j + \gamma + \partial\beta$. We consider the restriction of this equation to $U = 0$. $\gamma$ is congruent modulo $U$ to a linear combination of $\{\tilde\xi_i | A(\tilde\xi_i) \le k\}$ and $\beta$ is congruent modulo $U$ to a linear combination of $\{\tilde\xi_2, \tilde\xi_4, \ldots, \tilde\xi_{2m}\}$. Since the Alexander grading of $\tilde\xi_j = \tilde\eta_2$ is $k+1$, it follows that $j \in \{2, 4, \ldots, 2m\}$.

\end{proof}

\begin{proof}[Full proof of Lemma \ref{lem:no_left_down}]
Let $a =\sum_{i=1}^m a_i \eta_{2i-1} = \sum_{i=1}^m b_i \xi_{2i}$, where $a_i, b_i \in \F$. There is a corresponding element of $C^-$, $\tilde{a} = \sum_{i=1}^m a_i \tilde\eta_{2i-1}$; we also have that $\tilde{a}$ is congruent modulo $U$ to $\sum_{i=1}^m b_i \tilde\xi_{2i}$. For $i = 1, \ldots, m$,  define $a'_i$ to be $a_i$ if the length $\ell_i$ of the horizontal arrow from $\tilde\eta_{2i-1}$ to $\tilde\eta_{2i}$ is one and $0$ otherwise. We have that
$$D_2 \circ D_3(a) = \sum_{i=1}^m a'_i \eta_{2i} =: c.$$
We need to show that $D_1(c) = D_{12}(c) = D_3(c) = 0$. In terms of the vertical basis, we have $c = \sum_{j=0}^{2m} c_j \xi_{j}$, where $c_j \in \F$. It suffices to show that $c_j = 0$ unless $j\in\{2, 4, \ldots, 2m\}$, since $c$ has no outgoing horizontal chains and the vertical chains ending in $\xi_j$ with $j\in\{2,4,\ldots,2m\}$ only contribute outoing $D_{123}$ coefficient maps.

Consider the element $\tilde{b} = \sum_{i=1}^m b_i \tilde\xi_{2i-1}$ of $C^-$. By the definition of vertically simplified basis, $\partial\tilde{b}$ is congruent modulo $U$ to $\sum_{i=1}^m b_i \tilde\xi_{2i}$, which is congruent to $\tilde{a}$. That is,
$$\partial\tilde{b} = \tilde{a} + U\beta$$
for some $\beta \in C^-$. Since $\tilde{a}$ is congruent modulo $U$ to a linear combination of $\{\tilde\xi_2, \tilde\xi_4, \ldots, \tilde\xi_{2m}\}$, $\partial\tilde{a} \in UC^-$. By the definition of horizontally simplified basis, we have that
$$\partial\tilde{a} = U\sum_{i=1}^m a'_i \tilde\eta_{2i} + U^2 \sum_{i=1}^m (a_i - a'_i) U^{\ell_i-2} \tilde\eta_{2i} + U\gamma$$
for some $\gamma \in C^-$ with $A(\gamma) \le A(\tilde{a}) = k$. Now consider $\partial^2 \tilde{b}$:
$$0 = \partial^2(\tilde{b}) = \partial(\tilde{a}) + \partial(U\beta) =  U\sum_{i=1}^m a'_i \tilde\eta_{2i} + U^2 \sum_{i=1}^m (a_i - a'_i) U^{\ell_i-2} \tilde\eta_{2i} + U\gamma + U\partial\beta.$$
Dividing by $U$ and restricting to $U=0$, we find that
$$\sum_{i=1}^m a'_i \tilde\eta_{2i} + \gamma + \partial\beta \equiv 0 \text{ (mod } U).$$
Since  $\sum_{i=1}^m a'_i \eta_{2i} =  \sum_{j=0}^{2m} c_j \xi_{j}$, it follows that $\sum_{i=1}^m a'_i \tilde\eta_{2i}$ is congruent to $\sum_{j=0}^{2m} c_j \tilde\xi_{j}$ modulo $U$. Note that $c_j = 0$ unless $A(\tilde\xi_j) = k+1$, since $a'_i$ is only nonzero if $A(\tilde\eta_{2i}) = k+1$. Since $A(\gamma) \le k$, $\gamma$ is congruent modulo $U$ to a linear combination of $\{\tilde\xi_j | A(\tilde\xi_j) \le k\}$. Thus there can be no cancelation between the first two terms above. Finally, $\partial\beta$ is congruent modulo $U$ to a linear combination of $\{\tilde\xi_2, \tilde\xi_4, \ldots, \tilde\xi_{2m}\}$, so we must have that $c_j = 0$ unless $j \in \{2, 4, \ldots, 2m\}$.

\end{proof}

\begin{lem}
\label{lem:no_up_right}
For any $-g(K) \le k \le g(K)$ and any nonzero $a \in B'_k$, there does not exist an element $b \in \CFD(X_K^{[n]})$ such that $D_1\circ D_2(b) = D_{123}(a)$ or   $D_1\circ D_{12}(b) = D_{123}(a)$.
\end{lem}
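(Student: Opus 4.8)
The plan is to use the same $\partial^2 = 0$ bookkeeping in $C^-$ that proved Lemma~\ref{lem:no_left_down}, but now tracking where a hypothetical chain of coefficient maps \emph{into} $D_{123}(a)$ would have to come from. Recall from the theorem computing $\CFD(X_K^{[n]})$ that $D_{123}$ coefficient maps into a generator $\xi_j$ only occur at the end of a vertical chain, so $j$ must be even (of the form $\xi_{2j}$) and the map is $\xi_{2j}\xrightarrow{D_{123}}$ into the last node of the vertical chain attached to the vertical arrow $\tilde\xi_{2j-1}\to\tilde\xi_{2j}$. Meanwhile a composite $D_1\circ D_2(b)$ or $D_1\circ D_{12}(b)$ landing in $\iota_1\CFD$ must traverse a horizontal chain of length $\ell\ge 2$ (the $D_3,D_{23},\dots,D_{23},D_2$ pattern) or pass through the unstable chain; since $a\in B'_k$ lies away from the unstable chain by hypothesis on the Alexander grading and the form of $B'_k$, the relevant target $D_{123}(a)$ must be an interior node of a vertical chain, which cannot simultaneously be an interior node of a horizontal chain. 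So the real content is to rule out a vertical chain and a horizontal chain sharing an interior $\iota_1$ generator in the way the hypothesis demands.

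Concretely, I would argue as follows. Suppose for contradiction that such a $b$ exists with $D_1\circ D_2(b)=D_{123}(a)$ (the $D_1\circ D_{12}$ case is identical after replacing $\rho_2$ by $\rho_{12}$ at the appropriate spot, i.e.\ adjusting which idempotent $b$ carries). Tracing back through the coefficient maps, $b$ must be a generator $\eta_{2i-1}$ beginning a horizontal chain of length $\ell_i\ge 2$, so that $D_3(\eta_{2i-1})=\lambda^i_1$, $D_2(\lambda^i_1)$ would then have to equal $D_{123}(a)$ — but $D_2$ out of $\lambda^i_1$ only lands in $\eta_{2i}$ if $\ell_i=1$, contradiction; and if $\ell_i\ge 2$ then $D_2(\lambda^i_1)=0$. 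Thus no single horizontal chain can produce the claimed composite landing in a vertical-chain node. The remaining possibility is that $b$ is itself in $\iota_1\CFD$ and the composite factors through the algebra differently, but the idempotent constraints on $\rho_1,\rho_2,\rho_{12}$ force $b\in\iota_0\CFD$, so we are back to the horizontal-chain analysis. Finally, one must handle the case where $D_{123}(a)$ is a \emph{sum} of interior vertical-chain nodes: since each node appears in exactly one vertical chain and the chains are disjoint, linearity lets us reduce to the basis case exactly as in Lemma~\ref{lem:no_left_down}.

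I expect the main obstacle to be the same subtlety that made Lemma~\ref{lem:no_left_down} require a ``full proof'' beyond the simplified one: the horizontal and vertical bases $\{\tilde\eta_q\}$ and $\{\tilde\xi_q\}$ need not coincide, so $D_{123}(a)$ is genuinely a linear combination of vertical-chain endpoints indexed by the even $\xi$'s at Alexander grading $k+1$ (by the computation in Lemma~\ref{lem:no_left_down}), and one has to verify that \emph{no} $\F$-linear combination of such endpoints can be hit by $D_1\circ D_2$ or $D_1\circ D_{12}$ from anything. The cleanest way to close this is to observe that the image of $D_1\circ D_2$ (resp.\ $D_1\circ D_{12}$) lands in the span of first-nodes $\{\kappa^j_1\}$ of vertical chains that themselves receive an incoming $D_1$ (resp. $D_{12}$) — i.e.\ in $\mathrm{span}\{\kappa^j_1 : \tilde\xi_{2j-1}\to\tilde\xi_{2j}\text{ a length-}1\text{ vertical arrow}\}$, after another $\partial^2=0$ argument identifying which vertical chains $b$ can feed — whereas $D_{123}(a)$, being built from Lemma~\ref{lem:no_left_down}'s output $c=\sum a'_i\eta_{2i}$ pushed one more step by $D_{123}$, lands in the span of \emph{last} nodes $\kappa^j_{h_j}$ (equivalently the $\xi_{2j}$-adjacent end). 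Interior/last nodes and first nodes of vertical chains coincide only for length-1 vertical arrows, and for those the incoming map is $D_1$ directly rather than a composite $D_1\circ D_2$, so the two images intersect trivially except in a degenerate case one checks by hand. Assembling these observations gives the contradiction; I would spend most of the write-up making the ``which vertical chain can $b$ feed'' step precise via the $\partial^2=0$ computation, since that is where the genuine work lies.
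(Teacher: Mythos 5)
There is a genuine gap, and two of the concrete reductions you offer are actually incorrect. First, you misread the composites in the hypothesis. In $D_1\circ D_2(b)$ the map $D_2$ is applied first, so $b$ lies in $\iota_1\CFD(X_K^{[n]})$ (it is the terminal node of a horizontal or unstable chain), $c=D_2(b)$ is a combination of the $\eta_{2i}$'s in $\iota_0\CFD(X_K^{[n]})$, and then $D_1(c)$ lands on initial nodes $\kappa^j_1$ of vertical chains; your claim that the idempotents force $b\in\iota_0\CFD$ is backwards for this case (it is only right for $D_1\circ D_{12}$). Your "horizontal chain" argument in the second paragraph instead analyzes $D_2\circ D_3(\eta_{2i-1})$, which is the composite from Lemma \ref{lem:no_left_down}, not the one in this statement, and the contradiction you extract there ($D_2(\lambda^i_1)$ landing in $\eta_{2i}$) could never bear on the equation anyway, since the target $D_{123}(a)$ lies in $\iota_1$ while the image of $D_2$ lies in $\iota_0$. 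Second, the dichotomy you end with -- that the image of $D_1\circ D_2$ meets the span of the last nodes $\kappa^j_{h_j}$ "only in a degenerate case one checks by hand" -- is not a consequence of chain shapes at all: whenever a vertical arrow has length one, $\kappa^j_1=\kappa^j_{h_j}$, and the equality $D_1\circ D_2(b)=D_{123}(a)$ is perfectly consistent with the combinatorial structure of $\CFD$. This "degenerate case" is the entire content of the lemma.

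What actually has to be shown is a change-of-basis statement in $C^-$: no $\F$-linear combination $c$ of the even horizontal basis elements $\eta_{2i}$ (the only things in the image of $D_2$ or $D_{12}$) can have odd vertical components $c_{2i-1}=a_{2i}$ concentrated on length-one vertical arrows, when $a$ is a nonzero element of $B'_k$. You correctly suspect this needs a $\partial^2=0$ argument and that "that is where the genuine work lies," but you never carry it out, and your preliminary reductions do not set it up correctly. The paper's proof does exactly this: it forms $\tilde c=\sum b_{2i}\tilde\eta_{2i}$, uses the vertically simplified basis to compute $\partial\tilde c$ modulo $U$, rewrites $a$ in the odd horizontal basis supported in Alexander grading $k$ (this is where $a\in B'_k$ and Proposition \ref{prop:nice_basis} enter), and then restricts $\partial^2\tilde c=0$ to $\mathcal F_k/\mathcal F_{k-1}$ to see that the terms $d_{2i-1}U^{\ell_i}[\tilde\eta_{2i}]$ cannot cancel against anything, a contradiction. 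Without that computation, or a substitute for it, the proposal does not prove the lemma.
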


As with the previous Lemma, we first give the simpler proof under the assumption that the bases $\{\tilde\xi_i\}$ and $\{\tilde\eta_i\}$ can be identified. We make the further simplifying assumption that $a$ is a basis element.

\begin{proof}[Simplified proof]
Under the simplifying assumption, $B'_k$ is generated by basis elements of the form $\xi_{2i} = \eta_{2j-1}$. We assume without loss of generality that $a = \eta_1 = \xi_2$. Suppose there exist $b, c \in \CFD(X_K^{[n]})$ such that $D_1(c) = D_{123}(a)$ and $c = D_2(b)$ or $c = D_{12}(b)$. We will produce a contradiction, implying that such a $b$ does not exist.

The coefficient map $D_{123}$ on $a = \xi_2$ arises from the vertical chain from $\xi_1$ to $\xi_2$. The form of the vertical chain implies that $c$ only exists if the length $h_1$ of the vertical arrow from $\xi_1$ to $\xi_2$ is one. In this case, $c$ is $\xi_1$ plus a linear combination of $\{\xi_0, \xi_2, \ldots, \xi_{2m}\}$. $\xi_1 = \eta_j$ for some $j$. In fact, $j$ must be even because the coefficient maps $D_2$ and $D_{12}$ only appear at the end of horizontal and unstable chains and thus $D_2(b)$ and $D_{12}(b)$ are linear combinations of $\{\eta_0, \eta_2, \ldots, \eta_{2m}\}$.

Consider the element $\tilde\xi_1 = \tilde\eta_j$ of $C^-$. Since $j$ is even, $\tilde\eta_j$ is in the kernel of the horizontal differential. It follows that $\partial\tilde\eta_j = \tilde\xi_2 + U\beta$ where $A(\beta) \le A(\tilde\eta_j) = k+1$. Similarly, $\partial\tilde\xi_2 = \partial \tilde\eta_1 = U^\ell \tilde\eta_2 + U\gamma$, where $A(\gamma) \le k$.  Let $\beta'$ denote the restriction of $\beta$ to $\mathcal{F}_{k+1}/\mathcal{F}_k$. $\beta'$ is congruent modulo $U$ to a linear combination of $\{\tilde\eta_0, \tilde\eta_2, \tilde\eta_3, \ldots \tilde\eta_{2m}\}$ ($\tilde\eta_1$ is not included because $A(\tilde\eta_1) = k$). Thus $\partial\beta' = \delta + U\epsilon$, where $\delta$ is a linear combination of $\{\tilde\eta_4, \tilde\eta_6, \ldots, \tilde\eta_{2m}\}$ and $A(\epsilon) \le k+1$. Now consider
$$0 = \partial^2 \tilde\eta_j = \partial(\tilde\xi_2) + \partial(U\beta) = U^\ell \tilde\eta_2 + +U \gamma+ U \partial(\beta).$$
The restriction of $\partial^2 \tilde\eta_j$ to $\mathcal{F}_{k}/\mathcal{F}_{k-1}$ gives
$$0 = \left[ \partial^2\tilde\eta_j \right] = \left[ U^\ell \tilde\eta_2 + U\gamma + U\partial\beta' \right] = \left[ U^\ell \tilde\eta_2 + U\delta \right] .$$
Since $\delta$ is a linear combination of basis elements independent from $\tilde\eta_2$, the right hand side cannot be zero. This is a contradiction, and so the element $b$ must not exist.
\end{proof}

\begin{proof}[Full proof of Lemma \ref{lem:no_up_right}]

Let $a = \sum_{i=1}^m a_{2i} \xi_{2i}$ with $a_{2i} \in \Z_2$, and suppose that $c \in \CFD(X_K^{[n]})$ such that $D_1(c) = D_{123}(a)$. Further suppose that $D_2(b) = c$ or $D_{12}(b) = c$ for some $b$. We will reach a contradiction, implying that such a $b$ does not exist.

Note that for vertical basis elements, $D_1(\xi_j) = 0$ if $j$ is even. If $j$ is odd, $D_1(\xi_j) \neq 0$, and $D_1(\xi_j) = D_{123}(\xi_{j+1})$ if and only if the length of the vertical chain from $\xi_j$ to $\xi_{j+1}$ is one. Thus in terms of the vertical basis we have $c = \sum_{j = 0}^{2m} c_j \xi_j$, where $c_j \in \Z_2$, $c_{2i-1} = a_{2i}$ for $i = 1, 2, \ldots, m$, and $a_{2i} = 0$ unless the length $h_i$ of the vertical chain from $\xi_{2i-1}$ to $\xi_{2i}$ is one. The coefficient maps $D_2$ and $D_{12}$ only appear at the end of horizontal and unstable chains, so the fact that $c = D_2(b)$ or $c = D_{12}(b)$ implies that $c = \sum_{i=0}^m b_{2i} \eta_{2i}$ for some $b_{2i}\in \Z_2$.

Consider the element $ \tilde{c} = \sum_{i = 0}^{m} b_{2i} \tilde\eta_{2i}$ of $C^-$ and note that $\tilde{c}$ is equivalent modulo $U$ to $\sum_{j = 0}^{2m} c_j \tilde\xi_j$. The definition of vertically simplified basis implies that
$$\partial\tilde{c} \equiv \sum_{i=1}^m c_{2i-1} \tilde\xi_{2i} \equiv \sum_{i=1}^m a_{2i} \tilde\xi_{2i} \quad (\text{mod } U).$$
Since $a = \sum_{i=1}^m a_{2i} \xi_{2i}$ is an element of $B'_k$, it can also be written in terms of the horizontal basis as $a = \sum_{i=1}^m d_{2i-1} \eta_{2i-1}$, where $d_{2i-1} = 0$ unless $A(\eta_{2i-1}) = k$. It follows that the last sum above is congruent modulo $U$ to $\sum_{i=1}^m d_{2i-1} \tilde\eta_{2i-1}$. The definition of horizontally simplified basis implies that $A(\partial\tilde{c}) < A(\tilde{c}) = k+1$. Putting all this information together, we have that
$$\partial\tilde{c} = \sum_{i=1}^m d_{2i-1} \tilde\eta_{2i-1} + U\beta,$$
where $A(\beta) \le k+1$.

Modulo $\mathcal{F}_k$, $\beta$ can be written as a linear combination of horizontal basis elements with Alexander grading $A(\tilde\eta_j) \ge k+1$. That is, $\beta = \sum_{j=0}^{2m} \tilde{e}_j \tilde\eta_j  + \epsilon$, where $\tilde{e}_j \in \F[U]$ is 0 unless $A(\tilde{e}_j \tilde\eta_j) = k+1$ and $A(\epsilon) \le k$. By the definition of horizontal basis, we have that
\begin{eqnarray*}
\partial \left( \sum_{i=1}^m d_{2i-1} \tilde\eta_{2i-1} \right) &=& \gamma_1 + \sum_{i=1}^m d_{2i-1} U^{\ell_i} \tilde\eta_{2i} \quad \text{and}\\
\partial \left( \sum_{i=0}^{2m} \tilde{e}_i \tilde\eta_i \right) &=& \gamma_2 +  \sum_{i=1}^m \tilde{e}_{2i-1} U^{\ell_i} \tilde\eta_{2i},
\end{eqnarray*}
where $A(\gamma_1) < k$ and $A(\gamma_2) \le k$. We will consider the restriction of $\partial^2 \tilde{c}$ to $\mathcal{F}_{k}/\mathcal{F}_{k-1} \subset \text{gr}(C^-)$. We have
\begin{eqnarray*}
 0 = \left[ \partial^2 \tilde{c} \right] &=& \left[ \gamma_1 + \sum_{i=1}^m d_{2i-1} U^{\ell_i} \tilde\eta_{2i} + U\left(\gamma_2 + \sum_{i=1}^m \tilde{e}_{2i-1} U^{\ell_i} \tilde\eta_{2i} \right) + U \partial\epsilon \right] \\
 &=& \left[ \sum_{i=1}^m d_{2i-1} U^{\ell_i} \tilde\eta_{2i} \right] + \left[ \sum_{i=1}^m \tilde{e}_{2i-1} U^{\ell_i + 1} \tilde\eta_{2i} \right] \\
 &=& \sum_{i=1}^m d_{2i-1} U^{\ell_i} [ \tilde\eta_{2i} ] + \sum_{i=1}^m U^{\ell_i + 1} [\tilde{e}_{2i-1} \tilde\eta_{2i} ].
\end{eqnarray*}
The first sum is nonzero, since $a \in B'_k$ is nonzero. However, terms from the second sum can not cancel with terms from the first, since $d_{2i-1}$ is nonzero only if $A(\eta_{2i-1}) = k$, and $\tilde{e}_{2i-1}$ is nonzero only if $A(\eta_{2i-1}) \ge k+1$. This is a contradiction, so the element $b$ must not exist.
\end{proof}

\begin{lem}
If $x$ is a nonzero generator in $B'_k$ for some $k$, then $x$ is a durable generator. Moreover, $D_{123}(x) = y$ is nonzero and is a durable generator.
\end{lem}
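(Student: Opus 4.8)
The plan is to verify, one clause at a time, the defining conditions of a durable generator (Definition~\ref{def:durable_generators}) for $x \in B'_k$ and then for $y = D_{123}(x)$, drawing on the structural description of $\CFD(X_K^{[n]})$ from the bordered theorem and on Lemmas~\ref{lem:no_left_down} and~\ref{lem:no_up_right}. First, for $x \in \iota_0 \CFD(X_K^{[n]})$ I would check that $x$ has no incoming coefficient maps. Since $B'_k \subseteq \mathrm{span}\{\xi_2,\xi_4,\dots,\xi_{2m}\}$, every basis element appearing in $x$ is an \emph{even} vertical generator, and the only coefficient maps pointing into an even $\xi_{2j}$ come from the tail of a vertical chain (the $D_{123}$ arrow out of $\xi_{2j}$ points \emph{away}; the incoming arrow $\kappa^j_1 \xrightarrow{D_{23}} \cdots$ lands on $\xi_{2j}$ only via $D_{123}$ from... no, rather the vertical chain is $\xi_{2j-1} \xrightarrow{D_1} \kappa^j_1 \xleftarrow{D_{23}} \cdots \xleftarrow{D_{123}} \xi_{2j}$, so $\xi_{2j}$ only emits $D_{123}$ and receives nothing); similarly since $B'_k \subseteq \mathrm{span}\{\eta_1,\eta_3,\dots\}$, the odd horizontal generators only emit $D_3$ and receive nothing. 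So no coefficient map can hit $x$. The remaining clauses describe which sequences $D_{I_r}\circ\cdots\circ D_{I_1}$ can be nonzero on $x$: from $\iota_0$ the only outgoing arrows are $D_1$ (start of a vertical chain) and $D_3$ (start of a horizontal chain), but $D_1(\xi_{2j}) = 0$ and $D_{123}$ appears because even vertical generators $\xi_{2j}$ emit $D_{123}$ into $\kappa^j_{h_j}$; so $I_1 \in \{3, 123\}$, matching the definition. Following $D_{123}$ one must continue $D_{23}$ along the vertical chain (its only other arrows), giving $I_2 = 23$ when $r>1$. Following $D_3$ one enters a horizontal chain, so $I_2 \in \{23, 2\}$; and if $I_2 = 2$ (length-one horizontal arrow, landing on some even $\eta_{2i}$, hence on a linear combination of vertical generators) the only way to continue is to pick up a $D_{123}$ from an even vertical generator — this is exactly the content of Lemma~\ref{lem:no_left_down}, which says $D_I \circ D_2 \circ D_3(a) \ne 0$ forces $I = 123$. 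So all four sub-clauses hold and $x$ is durable.

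Next I would turn to $y = D_{123}(x)$. By the structure theorem, $D_{123}$ out of an even vertical generator $\xi_{2j}$ lands on $\kappa^j_{h_j}$, the last generator of the vertical chain; so $y$ is a (nonzero — because each $\xi_{2j}$ in the support of $x$ genuinely emits a $D_{123}$, there being no cancellation since distinct vertical chains are disjoint) linear combination of such chain-ends, all lying in $\iota_1 \CFD(X_K^{[n]})$. I need: (i) if $\pi_y \circ D_{I_r}\circ\cdots\circ D_{I_1}$ is nonzero then $r=1$ and $I_1 \in \{1,123\}$; and (ii) if $D_{I_r}\circ\cdots\circ D_{I_1}(y) \ne 0$ then $I_1 = 23$. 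For (ii): the generators $\kappa^j_{h_j}$ in a vertical chain emit only $D_{23}$ (toward $\kappa^j_{h_j - 1}$) — unless $h_j = 1$, in which case the chain is $\xi_{2j-1} \xrightarrow{D_1} \kappa^j_1 \xleftarrow{D_{123}} \xi_{2j}$ and $\kappa^j_1$ emits nothing, so vacuously fine. So any nonzero outgoing sequence from $y$ starts with $D_{23}$. For (i): incoming maps to $\kappa^j_{h_j}$ are $D_{123}$ from $\xi_{2j}$ (length $r=1$, $I_1 = 123$, fine) and, if $h_j \ge 2$, $D_{23}$ from $\kappa^j_{h_j+1}$... no — re-reading, the chain is $\kappa^j_1 \xleftarrow{D_{23}} \kappa^j_2 \xleftarrow{D_{23}} \cdots \xleftarrow{D_{23}} \kappa^j_{h_j} \xleftarrow{D_{123}} \xi_{2j}$, so $\kappa^j_{h_j}$ receives $D_{23}$ only if there is a $\kappa^j_{h_j + 1}$, which there is not; it receives only the $D_{123}$ from $\xi_{2j}$. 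Could a longer composite $D_{I_r}\circ\cdots\circ D_{I_1}$ project nontrivially onto $y$? Such a composite would have to end with an arrow into some $\kappa^j_{h_j}$, i.e.\ end in $D_{123}$ from $\xi_{2j}$, so it would factor through $\xi_{2j}$. But $\xi_{2j}$ has no incoming coefficient maps (shown above, since it is an even vertical generator), so the composite has length exactly $1$. Alternatively the composite could end with a $D_1$ into $\kappa^j_1$ when $h_j = 1$ (since then $\kappa^j_1 = \kappa^j_{h_j}$ is in the support of $y$): $D_1$ into $\kappa^j_1$ comes from $\xi_{2j-1}$, which is an \emph{odd} vertical generator. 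Here I need to rule out a nontrivial composite reaching $\xi_{2j-1}$ and then applying $D_1$ — but wait, $x \in B'_k$ is built from \emph{even} $\xi$'s and \emph{odd} $\eta$'s, whereas $\xi_{2j-1}$ is odd; the point is subtler and this is exactly what Lemma~\ref{lem:no_up_right} handles: it says there is no $b$ with $D_1 \circ D_2(b)$ or $D_1 \circ D_{12}(b)$ equal to $D_{123}(a)$, i.e.\ no composite of the dangerous shape can produce $y$.

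So the bookkeeping reduces to: (a) cataloguing the incoming and outgoing arrows at even vertical generators, odd horizontal generators, and vertical-chain tails $\kappa^j_{h_j}$, using the explicit form of $\CFD(X_K^{[n]})$; (b) invoking Lemma~\ref{lem:no_left_down} to control the one escape route for $x$ (the $D_3, D_2, \dots$ branch); and (c) invoking Lemma~\ref{lem:no_up_right} to control the one escape route for $y$ (a composite ending in $D_1$ after $D_2$ or $D_{12}$, relevant when some vertical arrow in the support has length one). The main obstacle is the $\iota_1$ part of the argument for $y$: one must be careful that the composites one needs to exclude are precisely those of the form $\cdots \xrightarrow{D_2 \text{ or } D_{12}} \xrightarrow{D_1} \kappa^j_1$ with $h_j = 1$, and that this matches the hypotheses of Lemma~\ref{lem:no_up_right} verbatim (including that the relevant $a$ — namely the restriction of $x$ to the $B'_k$ part feeding $\kappa^j_1$ — is nonzero, which it is since $x$ itself is nonzero and lies entirely in $B'_k$). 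Everything else — the absence of incoming arrows at $x$, the identification of $I_1 \in \{3,123\}$, and the shape of outgoing arrows from $y$ — is immediate from the coefficient-map description and requires no new work. I would also remark that the modification of the unstable chain for non-$L$-space knots is irrelevant here, since $B'_k$ and its image under $D_{123}$ live entirely away from the unstable chain.
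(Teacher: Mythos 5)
Your proposal is correct and follows essentially the same route as the paper's proof: the same catalogue of incoming/outgoing arrows at elements of $B'_k$ and at the chain-ends $\kappa^j_{h_j}$, with Lemma~\ref{lem:no_left_down} invoked for the $D_3, D_2, \ldots$ branch out of $x$ and Lemma~\ref{lem:no_up_right} for composites ending in $D_1$ into $y$, and the absence of incoming maps at $x$ ruling out longer composites ending in $D_{123}$. The only differences are cosmetic (you argue the outgoing maps of $y$ directly from the vertical-chain shape rather than from the restrictions on chains out of $x$), so no further comment is needed.
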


\begin{proof}
First we check that $x$ is durable. It is clear that there are no incoming coefficient maps, since $B'_k$ does not contain $\eta_{2i}$ for $i = 0, \ldots, m$. Outgoing coefficient maps from $B'_k$ can come either from horizontal chains starting with $D_3$, or from vertical chains starting with $D_{123}$. It follows that $D_1$ and $D_{12}$ are zero on $B'_k$.

Let $D_{I_r} \circ \cdots \circ D_{I_1}$ be a composition of coefficient maps which is nonzero on $x$. We have now that either $I_1 = 3$ or $I_1 = 123$. Consider first the case that $I_1 = 3$. The form of the horizontal chains implies that if $r > 1$, $I_2$ is either 23 or 2. We need to show that if $I_2 = 2$ and $r > 2$, then $I_3 = 123$. This last statement is proved in Lemma \ref{lem:no_left_down}. In the case that $I_1 = 123$, then the shape of vertical chains implies that if $r > 1$, $I_2$ must be $23$. This completes the proof that $x$ is durable.

Now consider $y = D_{123}(x)$. If $x = \sum_{i=1}^m a_i \xi_{2i} \neq 0$, then $y = \sum_{i=1}^m a_i \kappa^j_{h_j} \neq 0$. The restrictions on the outgoing chains from $x$ imply that if $D_I(y)$ is nonzero, then $I$ is 23. The form of vertical chains implies that if $\pi_y \circ D_I(z) = y$ then either $I = 1$ or $I=123$. Moreover, if $I=123$ then $z = x$. Since $x$ has no incoming coefficient maps, $\pi_y \circ D_{123} \circ D_I$ is trivial for any $I$. We also need that $\pi_y \circ D_1 \circ D_I$ is trivial for any $I$; this follows from Lemma \ref{lem:no_up_right} and the fact that $y \in D_{123}(\{x\})$. This proves that $y$ is durable.
\end{proof}

Any generator of $B'_k$ leads to the desired pair of durable generators. It only remains to show that such a generator must exist for some $k$.

\begin{prop}
Suppose $K$ is not an $L$-space knot; then $B'_k$ is nontrivial for some $k$.
\end{prop}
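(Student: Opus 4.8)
The plan is to prove the contrapositive: \emph{if $B'_k = 0$ for every $k$, then $K$ is an $L$-space knot}. First I would repackage the hypothesis as a single statement in linear algebra. Put $S_\xi = \mathrm{span}\{\xi_2, \xi_4, \ldots, \xi_{2m}\}$ and $S_\eta = \mathrm{span}\{\eta_1, \eta_3, \ldots, \eta_{2m-1}\}$ inside $\iota_0 \CFD(X_K^{[n]})$. Then $S_\xi$ is precisely the image of the vertical differential $\partial^v$ on $C^v \cong \iota_0 \CFD(X_K^{[n]})$ (because $\partial^v \xi_{2j-1} = \xi_{2j}$, while $\partial^v \xi_{2i} = 0$ and $\partial^v \xi_0 = 0$), $S_\eta$ is the span of the sources of the horizontal chains, and both $S_\xi$ and $S_\eta$ are homogeneous for the Alexander grading. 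Since $B'_k = (S_\xi)_k \cap (S_\eta)_k$ and $S_\xi \cap S_\eta = \bigoplus_k\bigl((S_\xi)_k \cap (S_\eta)_k\bigr)$, the assumption that every $B'_k$ vanishes is equivalent to $S_\xi \cap S_\eta = 0$. As $\dim S_\xi = \dim S_\eta = m$ and $\dim \iota_0 \CFD(X_K^{[n]}) = 2m+1$, this says $S_\xi \oplus S_\eta$ has codimension one.

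Next I would feed this codimension-one splitting into the structure of $CFK^-(K)$, with the aim of producing a single filtered basis that is simultaneously vertically and horizontally simplified and of staircase shape; by Section~\ref{sec:Lspace_knots} the existence of such a basis is exactly the assertion that $K$ is an $L$-space knot. Two inputs are relevant here: Proposition~\ref{prop:nice_basis}, which says that the change of basis between the $\xi_p$ and the $\eta_q$ is block-diagonal with respect to the Alexander grading, so that $S_\xi \cap S_\eta = 0$ can be tested one Alexander grading at a time; and the conjugation symmetry of $C^\infty$, which matches the vertical arrows (together with their lengths and the Alexander gradings of their endpoints) with the horizontal arrows after negating the Alexander grading. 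Combined with the one-dimensionality of $H_*(C^v) \cong H_*(C^h) \cong \HFhat(Y) \cong \F$, which pins down exactly one vertical- and one horizontal-homology generator, I would argue that the disjointness forces the vertical and horizontal chains to interlock into a single zigzag path, namely the staircase, beginning at the vertical-homology generator $\xi_0$ and alternating vertical and horizontal arrows. One way to organize this is by induction on $m$ (with the unknot, $m=0$, as base case): for $m \ge 1$ one isolates the outermost step, i.e.\ the generator in the role of $\tilde x_{2m-1}$ (the common source of the two outermost arrows, ending at the generators in the roles of $\tilde x_{2m-2}$ and $\tilde x_{2m}$), exhibits the rest of $CFK^-(K)$ as a smaller complex still satisfying $S_\xi \cap S_\eta = 0$, and invokes the inductive hypothesis.

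The main obstacle is exactly the step of showing that $S_\xi \cap S_\eta = 0$ really does force the chains to interlock into one staircase, rather than breaking into several independent segments, branching, or containing a vertical-chain target that is also a horizontal-chain source. The subtlety is that $S_\xi \cap S_\eta = 0$ only forbids an element of $S_\xi$ from being an $\F$-combination of the $\eta_{2j-1}$ \emph{alone}: an element of $S_\xi$ may perfectly well have a nonzero $\eta_{2j-1}$-component together with other terms. Ruling this out should require exploiting the relation $\partial^2 = 0$ in $CFK^-(K)$ grading by grading (in the spirit of the $\partial^2$ computations in the proofs of Lemmas~\ref{lem:no_left_down} and~\ref{lem:no_up_right}) and using the one-dimensionality of the vertical and horizontal homologies to promote the grading-local information to the global staircase. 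I expect this interlocking argument to be the bulk of the proof, with the reformulation and the bookkeeping in the induction comparatively routine.
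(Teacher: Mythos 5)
Your reformulation is fine as far as it goes: with bases chosen as in Proposition \ref{prop:nice_basis} the subspaces $S_\xi$ and $S_\eta$ are homogeneous in the Alexander grading, so the vanishing of every $B'_k$ is indeed equivalent to $S_\xi \cap S_\eta = 0$, and the contrapositive is a reasonable statement to aim at. But the proposal stops exactly where the proposition begins. The entire content of the statement is that this disjointness forces the staircase structure (equivalently, the combinatorial conditions \eqref{eqn:lspace_conditions}), and you explicitly leave that ``interlocking'' step as something you \emph{would} argue via $\partial^2=0$ and the rank-one vertical and horizontal homologies, flagging it yourself as the main obstacle and the expected bulk of the proof. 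No argument is actually given for it: nothing in the proposal rules out the configurations you name (several independent zigzag segments, branching, a vertical-arrow target that is also a horizontal-arrow source), and the codimension-one count $\dim(S_\xi \oplus S_\eta) = 2m$ does not by itself exclude any of them. The proposed induction also presupposes what is to be proved: to ``isolate the outermost step'' you must already know there is a generator serving simultaneously as the source of an extremal vertical arrow and an extremal horizontal arrow, which is itself an instance of the interlocking claim. So as it stands this is a plan with the decisive step missing, not a proof.

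For comparison, the paper does not attempt any global reconstruction of the staircase. It characterizes L-space knots by the grading-by-grading conditions \eqref{eqn:lspace_conditions}, takes the \emph{smallest} Alexander grading $k_0$ at which they fail, and follows chains of basis identifications downward through the gradings below $k_0$, where the conditions do hold and force each $B_k$ to be one-dimensional; every such chain must terminate at the distinguished generator $\eta_0$ (resp.\ $\xi_0$), and since that terminal generator is unique, at most one of the excluded basis vectors (an odd-index $\xi$ or a positive even-index $\eta$) can occur in $B_{k_0}$. Failure of \eqref{eqn:lspace_conditions} at $k_0$ then forces $B'_{k_0}\neq 0$. If you want to complete your global approach, the missing interlocking argument will have to contain essentially this descending-chain, uniqueness-of-the-bottom-generator reasoning (or a $\partial^2$ computation in the spirit of Lemmas \ref{lem:no_left_down} and \ref{lem:no_up_right} doing the same job), at which point localizing at a minimal bad grading is the cleaner way to organize it.
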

\begin{proof}

Note that $K$ is an $L$-space knot if and only if
\begin{equation}
\label{eqn:lspace_conditions}
\begin{cases}
\quad \bullet \quad \text{Each nonzero } B_k \text{ for } -g(K) \le k \le g(K) \text{ is one dimensional, }\\
\quad \bullet \quad \text{If } B_k \text{ contains } \eta_{2i-1}, \text{ then it contains one of } \{\xi_0, \xi_1, \xi_3, \ldots, \xi_{2m-1}\}, \\
\quad \bullet \quad \text{If } B_k \text{ contains } \eta_{2i}, \text{ then it contains one of } \{\xi_0, \xi_2, \xi_4, \ldots, \xi_{2m}\}, \\
\quad \bullet \quad \text{If } B_k \text{ contains } \xi_{2i-1}, \text{ then it contains one of } \{\eta_0, \eta_1, \eta_3, \ldots, \eta_{2m-1}\}, \\
\quad \bullet \quad \text{If } B_k \text{ contains } \xi_{2i}, \text{ then it contains one of } \{\eta_0, \eta_2, \eta_4, \ldots, \eta_{2m}\}.
\end{cases}
\end{equation}
Since $K$ is not an $L$-space knot, there is some integer $k$ such that $B_k$ does not satisfy \eqref{eqn:lspace_conditions}; let $k_0$ be the smallest such $k$. We will show that $B'_{k_0}$ is nontrivial.

First note that the the vertical basis for $B_{-g(K)}$ is a subset of $\{\xi_0, \xi_2, \ldots, \xi_{2m}\}$ and the horizontal basis is a subset of $\{\eta_0, \eta_1, \eta_3, \ldots, \eta_{2m-1}\}$, since $A(\xi_{2i-1})>A(\xi_{2i})$ and $A(\eta_{2i})>A(\eta_{2i-1})$ for $1 \le i \le m$. $B'_{-g(K)}$ is trivial only if $B_{-g(K)}$ is generated by either $\xi_0$ or $\eta_0$, in which case $B_{-g(K)}$ satisfies \eqref{eqn:lspace_conditions}. Thus if $k_0 = -g(K)$ we are done, and if $k_0 > -g(K)$ we can assume that either $\xi_0$ or $\eta_0$ generate the lowest Alexander grading.

Suppose that $k_0 > -g(K)$. We will assume first that $B_{-g(K)}$ is generated by $\eta_0$. It follows that $\xi_0$ is in the highest occupied Alexander grading, $g(K)$. In fact, by symmetry $B_{g(K)}$ is one dimensional and must be generated by $\xi_0$, and so $B_{g(K)}$ satisfies \eqref{eqn:lspace_conditions} and $k_0 < g(K)$. Suppose $B_{k_0}$ contains $\xi_{i_0}$ for some odd $i_0$. Then $\xi_{i_0+1}$ has Alexander grading $k_1 < k_0$. Since $B_{k_1}$ satisfies \eqref{eqn:lspace_conditions}, it is one dimensional and $\xi_{i_0+1} = \eta_{i_1}$ for $i_1$ even. If $i_1 \ne 0$, then $\eta_{i_1-1}$ has Alexander grading $k_2 < k_1$. It follows that $B_{k_2}$ is one dimensional and $\eta_{i_1-1} = \xi_{i_2}$ where $i_2$ is odd. We find that $\xi_{i_2+1} = \eta_{i_3}$ with $i_3$ even. Continuing in this way, we construct a chain of generators $\xi_{i_0}, \eta_{i_1}, \xi_{i_2}, \ldots$ of decreasing Alexander grading that only ends with $\eta_0$. Since $C^-/UC^-$ is finite dimensional, the chain must end. Similarly, if $B_{k_0}$ contains $\eta_{i_0}$ for some even $i_0 > 0$, then we can construct a chain of generators $\eta_{i_0}, \xi_{i_1} = \eta_{i_0-1}, \eta_{i_2}=\xi_{i_1+1}, \ldots$ with decreasing Alexander grading. This chain must end with $\eta_0$.

Any two such chains starting from $B_{k_0}$ must be disjoint outside $B_{k_0}$. Since each ends in $\eta_0$, there can be at most one such chain. Thus $B_{k_0}$ contains either: $(a)$ at most one of $\{\xi_1, \xi_3, \ldots, \xi_{2m-1}\}$ and none of $\{\eta_2, \eta_4, \ldots, \eta_{2m}\}$, or $(b)$ at most one of $\{\eta_2, \eta_4, \ldots, \eta_{2m}\}$ and none of $\{\xi_1, \xi_3, \ldots, \xi_{2m-1}\}$. Also note that $\eta_0$ and $\xi_0$ are not in $B_{k_0}$, since $-g(K) < k_0 < g(K)$.

If $B_{k_0}$ contains none of $\{\xi_1, \xi_3, \ldots, \xi_{2m-1}\}$ and none of $\{\eta_2, \eta_4, \ldots, \eta_{2m}\}$, then $B'_{k_0} = B_{k_0}$ is nontrivial. If $B_{k_0}$ contains $\eta_{2i}$ for some $1\le i\le m$, then $B'_{k_0} = B_{k_0}/span\{\eta_{2i}\}$. It follows that $B'_{k_0}$ is nontrivial, since if $B_{k_0} = span\{\eta_{2i}\}$ then \eqref{eqn:lspace_conditions} is satisfied. Finally, if $B_{k_0}$ contains $\xi_{2i-1}$ for some $1\le i\le m$, then $B'_{k_0} = B_{k_0}/span\{\xi_{2i-1}\}$ is nontrivial, since if $B_{k_0} = span\{\xi_{2i-1}\}$ then \eqref{eqn:lspace_conditions} is satisfied.

The case that $B_{-g(K)}$ is generated by $\eta_0$ instead of $\xi_0$ is completely identical, except that the chains of generators of decreasing Alexander grading described above terminate in $\eta_0$ instead of $\xi_0$.
\end{proof}

\subsection{Durable generators for $L$-space knots}

The pairs of durable generators described in the preceding section do not exist for $L$-space knots; indeed, for an $L$-space knot the spaces $B'_k$ are trivial for any $k$. However, we can find similar pairs of generators for certain framings. 

\begin{prop}
\label{prop:Lspace_knots_durable}
Let $K$ be an $L$-space knot with framing $n$, such that $n<2\tau(K)$ if $\tau(K)>0$ and $n>2\tau(K)+1$ if $\tau(K)<0$. Then $\CFD(X_K^{[n]})$ has a pair of durable generators $\x$ and $\y = D_{123}(x)$.
\end{prop}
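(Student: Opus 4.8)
The plan is to exhibit the durable pair by direct inspection of the explicit description of $\CFD(X_K^{[n]})$ for an $L$-space knot recalled above. Its staircase part — the horizontal chains ($D_3,D_{23},\dots,D_{23},D_2$) and vertical chains (a $D_1$ leaving the higher generator, a $D_{123}$ leaving the lower) among $x_0,\dots,x_{2k}$ — is independent of $n$; only the unstable chain joining the two ends of the staircase changes, and its form is governed by $t=n-2\tau(K)$: it is $\xi_0\overset{D_1}\to\mu_1\overset{D_{23}}\leftarrow\cdots\overset{D_{23}}\leftarrow\mu_t\overset{D_3}\leftarrow\eta_0$ if $t>0$, the single arrow $\xi_0\overset{D_{12}}\to\eta_0$ if $t=0$, and $\xi_0\overset{D_{123}}\to\mu_1\overset{D_{23}}\to\cdots\overset{D_{23}}\to\mu_{|t|}\overset{D_2}\to\eta_0$ if $t<0$. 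The hypothesis is precisely that $t<0$ when $\tau(K)>0$ and $t\ge 2$ when $\tau(K)<0$. Since the positive- and negative-surgery staircases differ only by interchanging horizontal and vertical chains (and $\xi_0$ with $\eta_0$), the two cases are essentially mirror images, and I would treat them in parallel.

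I would then locate $\x$: it should be an extremal generator of the staircase that is both the source of a $D_{123}$ (coming from the attached unstable chain when $t<0$, or from a vertical chain of the staircase) and free of incoming coefficient maps — the framing hypothesis is exactly what makes such a choice possible. Set $\y=D_{123}(\x)$. Durability of $\x$ is then immediate: any nonzero composition $D_{I_r}\circ\cdots\circ D_{I_1}(\x)$ is an initial segment of one of the chains leaving $\x$, so it begins with $D_3$ or $D_{123}$ and afterwards consists only of $D_{23}$'s; all the bullet conditions in the definition of durable then hold — in particular $D_2$ never occurs, making the last bullet vacuous. Durability of $\y$ is similar: because $\x$ has no incoming map there is no composition of length $>1$ ending at $\y$, the unique coefficient map into $\y$ is the single $D_{123}$ from $\x$, and the only maps out of $\y$ continue a $D_{23}$-chain — exactly the conditions for a durable $\iota_1$-generator.

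The crux is justifying that $\x$ can indeed be chosen with no incoming coefficient map and that $\y$ receives no stray $D_1$: this is the sole point at which the hypothesis is used, and it also explains the exclusions. At $n=2\tau(K)$ the unstable chain collapses to a lone $D_{12}$, so no suitable $D_{123}$ is available; when $\tau(K)<0$ and $n=2\tau(K)+1$ the unstable chain has length one, so its $D_1$- and $D_3$-ends meet at a single generator and the candidate $\x$ either acquires an incoming coefficient map or no longer lies at the tail of a $D_{123}$. Once the no-incoming claim is verified in both cases, the proof is complete.
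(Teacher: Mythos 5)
There is a genuine gap, and it sits exactly where you defer the work. The entire content of this proposition is the identification of a concrete $\x$ with no incoming coefficient maps whose $D_{123}$-image is again durable; the paper's proof simply takes $\x = x_0$ in the staircase basis of Section \ref{sec:Lspace_knots} (so $\y = y^0_{\ell_0}$ if $\tau(K)>0$, $\y = y^{2k}_1$ if $\tau(K)<0$) and reads the surrounding coefficient maps off Figure \ref{fig:weakly_durable_generators}. Writing ``the framing hypothesis is exactly what makes such a choice possible'' and postponing the no-incoming check is not an argument: that check is the only place the hypothesis enters, and you never name the generator you intend to use.

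Moreover, the setup in which you propose to carry out that check is the wrong one, so the check would fail rather than succeed. You adopt the unstable-chain trichotomy as transcribed in the background section with $t=n-2\tau(K)$, but there the $t>0$ and $t<0$ forms are interchanged: in \cite[Theorem 11.26/A.11]{LOT:Bordered}, and consistently in this paper's Figures \ref{fig:weakly_durable_generators}--\ref{fig:upper_right} and Tables \ref{table:lower_left_typeD}--\ref{table:lower_left_typeA}, the framing range $n<2\tau(K)$ gives the zigzag $\xi_0 \xrightarrow{D_1}\mu_1 \xleftarrow{D_{23}}\cdots\xleftarrow{D_{23}}\mu_{2\tau(K)-n}\xleftarrow{D_3}\eta_0$, while $n>2\tau(K)$ gives the directed chain $\xi_0\xrightarrow{D_{123}}\mu_1\xrightarrow{D_{23}}\cdots\xrightarrow{D_2}\eta_0$. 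Under your reading, for $\tau(K)>0$ and $n<2\tau(K)$ the unstable chain would end with a $D_2$ into $\eta_0=x_0$; but then every $\iota_0$ generator emitting a $D_{123}$ has an incoming $D_2$ (each $x_{2i}$ with $i\ge 1$ receives one from a horizontal chain, and $x_0$ would receive the unstable chain's), so no generator satisfying your ``no incoming'' requirement exists --- you have in effect placed the genuinely non-durable regime $n>2\tau(K)$ under the hypothesis. With the correct forms the verification is immediate: for $\tau(K)>0$, $n<2\tau(K)$, $\x=x_0=\eta_0$ has only the outgoing $D_{123}$ into the first vertical chain and the outgoing $D_3$ into the zigzag; for $\tau(K)<0$, $n>2\tau(K)+1$, $\x=x_0=\xi_0$ has an outgoing $D_3$ (horizontal chain) and the outgoing $D_{123}$ starting a directed unstable chain of length at least two, so $\y=y^{2k}_1$ is followed by $D_{23}$ rather than $D_2$. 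This last point is also the real reason $n=2\tau(K)+1$ is excluded (the chain of length one forces a $D_2$ immediately after the $D_{123}$, violating durability of both $\x$ and $\y$), not the ``$D_1$- and $D_3$-ends meeting'' picture you describe, which again comes from the swapped convention.
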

\begin{proof}
Using the basis for $\CFD(X_K^{[n]})$ described in Section \ref{sec:Lspace_knots}, we simply take $\x$ to be $x_0$. $\y = D_{123}(\x)$ is $y^0_{\ell_0}$ if $\tau(K)>0$ or $y^{2k}_1$ if $\tau(K)<0$. The relevant portion of $\CFD(X_K^{[n]})$ is pictured in Figure \ref{fig:weakly_durable_generators}; it is easy to check that the generator $\x$ and $\y$ satisfy Definition \ref{def:durable_generators}.
\end{proof}

Framed complements of $L$-space knots which are not addressed by Proposition \ref{prop:Lspace_knots_durable} do not have a pair of durable generators separated by the coefficient map $D_{123}$. However, all $L$-space knot complements have a pair of weakly durable generators in $\CFD(X_K^{[n]})$. Using the basis described in Section \ref{sec:Lspace_knots}, let $\x = x_0$ and $\y = y^0_{\ell_0}$ if $\tau(K) > 0$. If $\tau(K)<0$, take $\x =x_1$ and $\y= y^1_{\ell_1}$. In either case, $D_{123}(\x) = \y$, and $\x$ and $\y$ are weakly durable. The coefficient maps into and out of $\x$ and $\y$ can be seen in Figure \ref{fig:weakly_durable_generators} if we replace the unstable chain according to the framing, as described in Section \ref{sec:Lspace_knots}.

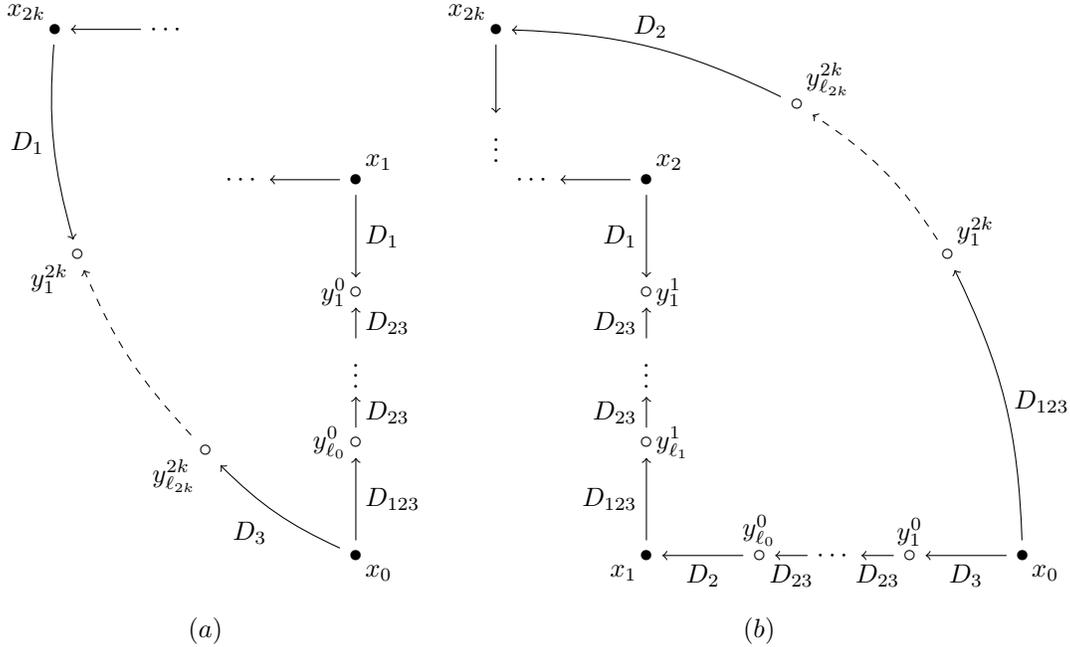
\begin{figure}

\begin{tikzpicture}
\footnotesize
\node (x0) at (0,0) {$\bullet$};
\node (y0) at (0, 1.5) {$\circ$};
\node (dots0) at (0, 2.5) {$\vdots$};
\node (y1) at (0, 3.5) {$\circ$};
\node (x1) at (0, 5) {$\bullet$};
\node (dots1) at (-1.5, 5) {$\cdots$};
\node (dots2) at (-2.5, 7) {$\cdots$};
\node (x3) at (-4, 7) {$\bullet$};

\node (z1) at (-3.7, 4) {$\circ$};
\node (z2) at (-2, 1.4) {$\circ$};
\draw[->, bend right = 10] (x3) to node[left]{$D_{1}$} (z1);
\draw[->, dashed, bend left = 10] (z2) to (z1);
\draw[->, bend left = 10] (x0) to node[below left]{$D_3$} (z2);
\node[below left] at (z1) {$y^{2k}_1$};
\node[below left] at (z2) {$y^{2k}_{\ell_{2k}}$};

\draw[->] (x0) to node[right]{$D_{123}$} (y0);
\draw[->] (y0) to node[right]{$D_{23}$} (dots0);
\draw[->] (dots0) to node[right]{$D_{23}$} (y1);
\draw[->] (x1) to node[right]{$D_1$} (y1);
\draw[->] (x1) to (dots1);
\draw[->] (dots2) to (x3);

\node[below right] at (x0) {$x_0$};
\node[left] at (y1) {$y^0_1$};
\node[left] at (y0) {$y^0_{\ell_0}$};
\node[above right] at (x1) {$x_1$};
\node[above left] at (x3) {$x_{2k}$};

\node at (-2, -1) {$(a)$};
\end{tikzpicture} \begin{tikzpicture}
\footnotesize
\node (x0) at (0,0) {$\bullet$};
\node (y0) at (-1.5, 0) {$\circ$};
\node (dots0) at (-2.5, 0) {$\cdots$};
\node (y1) at (-3.5, 0) {$\circ$};
\node (x1) at (-5, 0) {$\bullet$};
\node (y2) at (-5, 1.5) {$\circ$};
\node (dots1) at (-5, 2.5) {$\vdots$};
\node (y3) at (-5, 3.5) {$\circ$};
\node (x2) at (-5, 5) {$\bullet$};
\node (dots2) at (-6.5, 5) {$\cdots$};
\node (x3) at (-7, 7) {$\bullet$};
\node (dots3) at (-7, 5.5) {$\vdots$};

\node (z1) at (-1, 4) {$\circ$};
\node (z2) at (-3, 6) {$\circ$};
\draw[->, bend right = 12] (x0) to node[right]{$D_{123}$} (z1);
\draw[->, dashed, bend right = 12] (z1) to (z2);
\draw[->, bend right = 12] (z2) to node[above]{$D_2$} (x3);
\node[above right] at (z1) {$y^{2k}_1$};
\node[above right] at (z2) {$y^{2k}_{\ell_{2k}}$};

\draw[->] (x0) to node[below]{$D_3$} (y0);
\draw[->] (y0) to node[below]{$D_{23}$} (dots0);
\draw[->] (dots0) to node[below]{$D_{23}$} (y1);
\draw[->] (y1) to node[below]{$D_2$} (x1);
\draw[->] (x1) to node[left]{$D_{123}$} (y2);
\draw[->] (y2) to node[left]{$D_{23}$} (dots1);
\draw[->] (dots1) to node[left]{$D_{23}$} (y3);
\draw[->] (x2) to node[left]{$D_1$} (y3);
\draw[->] (x2) to (dots2);
\draw[->] (x3) to (dots3);

\node[below right] at (x0) {$x_0$};
\node[above] at (y0) {$y^0_1$};
\node[above] at (y1) {$y^0_{\ell_0}$};
\node[below left] at (x1) {$x_1$};
\node[right] at (y2) {$y^1_{\ell_1}$};
\node[right] at (y3) {$y^1_1$};
\node[above right] at (x2) {$x_2$};
\node[above left] at (x3) {$x_{2k}$};

\node at (-3.5, -1) {$(b)$};
\end{tikzpicture}
\caption{The portion of $\CFD(X_K^{[n]})$ for an $L$-space knot complement containing the pair of durable generators or the pair of weakly durable generators. $(a)$ represents a knot with $\tau(K)>0$ and $n < 2\tau(K)$; $(b)$ represents a knot with $\tau(K)<0$ and $n > 2\tau(K)$. The dotted arrow represents a chain of $D_{23}$ arrows whose length depends on $n$.}
\label{fig:weakly_durable_generators}

\end{figure}

\subsection{Proving the \emph{only if} statement}
\label{sec:boundary_case}

First note that it is sufficient to prove Theorem \ref{main_theorem} when $\tau(K_1) \ge 0$, since the result for $\tau(K_1) < 0$ follows by taking the mirror image of both framed knot complements. Using pairs of durable generators we can now prove that splicing integer framed knot complements never produces an $L$-space if at least one of the knots (we may assume it is $K_1$) is a non-$L$-space knot or has framing $n_1$ such that $n_1<2\tau(K_1)$ with $\tau(K_1)>0$. Indeed, we have shown that in this case $\CFD(X_{K_1}^{[n_1]})$ has a pair of durable generators $\x_1$ and $\y_1 = D_{123}(\x_1)$, and that $\CFD(X_{K_2}^{[n_2]})$ has a pair of weakly durable generators $\x_2$ and $\y_2 = D_{123}(\x_2)$. That the spliced manifold is not an $L$-space follows from Proposition \ref{prop:durable_tensor_durable}.

To prove the \emph{only if} direction of Theorem \ref{main_theorem}, the only case left to consider is that $K_1$ and $K_2$ are $L$-space knots, $n_1 = 2\tau(K_1)$, $n_2 = 2\tau(K_2)$, and $\tau(K_1)$ and $\tau(K_2)$ are both positive. In this case we will make use of an explicit basis for $\CFD$ of each framed complement. Let $\{x_0, \ldots, x_{2k}\}$ and $\cup_{i=0}^{2k} \{y^i_1, \ldots, y^i_{\ell_i}\}$ be the bases for $\iota_0 \CFD(X_{K_1}^{[n_1]})$ and $\iota_1 \CFD(X_{K_1}^{[n_1]})$, respectively, described in Section \ref{sec:Lspace_knots}. Let $\{u_0, \ldots, u_{2m}\}$ and $\cup_{i=0}^{2m} \{v^i_1, \ldots, v^i_{h_i}\}$ be analogous bases for $\iota_0 \CFD(X_{K_2}^{[n_2]})$ and $\iota_1 \CFD(X_{K_2}^{[n_2]})$. We use a bar to denote the corresponding type $A$ generators.

Consider the generators $\bar{x}_0 \otimes u_0$ and $\bar{y}^0_{\ell_0} \otimes v^0_{h_0}$ in $\CFA(X_{K_1}^{[n_1]}) \boxtimes \CFD(X_{K_2}^{[n_2]})$. Equations \eqref{eq:Z2_property1} and \eqref{eq:Z2_property2} imply that 
$$gr(\bar{x}_0) \neq gr(x_0) = gr(y^0_{\ell_0}) = gr(\bar{y}^0_{\ell_0})$$
and
$$gr(u_0) \neq gr(v^0_{h_0}). $$
It follows that $\bar{x}_0 \otimes u_0$ and $\bar{y}^0_{\ell_0} \otimes v^0_{h_0}$ have opposite $\Z_2$ gradings. We will show that both generators survive in homology, implying that $Y(K_1^{[n_1]}, K_2^{[n_2]})$ is not an $L$-space.

Any $\Ainfty$ operation that evaluates to $x_0$ must have $\rho_2$ as its last input. Since there is no incoming coefficient map $D_2$ at $u_0$, $x_0 \otimes u_0$ has no incoming differentials. Any nontrivial operation $m_{k+1}(x_0, \rho_{I_1}, \ldots, \rho_{I_r})$ must have $I_1 = 3$. Since $D_3(u_0) = 0$, $x_0 \otimes u_0$ has no outgoing differentials.

There are no nontrivial $\Ainfty$ operations starting at $y^0_{\ell_0}$, and if $m_{k+1}(z, \rho_{I_1}, \ldots, \rho_{I_r}) = y^0_{\ell_0}$ for some $z$ in $\CFA(X_{K_1}^{[n_1]})$ and some intervals $I_1, \ldots, I_r$, then $I_r$ is $1$, $3$, or $23$ and if $I_r=1$ then $r>1$ and $I_{r-1} = 2$. Since
$$\pi_{v^0_{h_0}} \circ D_3, \quad \pi_{v^0_{h_0}} \circ D_{23}, \quad \text{ and } \quad \pi_{v^0_{h_0}} \circ D_1 \circ D_2$$
are trivial on $\CFD(X_{K_2}^{[n_2]})$, there can be no differentials into or out of $y^0_{\ell_0} \otimes v^0_{h_0}$.

\subsection{$L$-spaces produced by splicing}
\label{sec:if_direction}

It remains to prove the \emph{if} direction of Theorem \ref{main_theorem}. That is, we need to prove that for $L$-space knots with appropriate framings the manifold $Y(K_1^{[n_1]}, K_2^{[n_2]})$ \emph{is} an $L$-space. This is more difficult in the sense that we must consider all of $\HFhat$; to show something is not an $L$-space it is sufficient to find one generator with the wrong $\Z_2$ grading, but now we must show that every generator has the same grading. Fortunately the simple form of $\CFD$ for $L$-space knot complements makes this possible.

\bigbreak

Let $K_1$ and $K_2$ be $L$-space knots and suppose that
\begin{itemize}
\item $n_i \ge 2\tau(K_i) > 0$ or $n_i \le 2\tau(K_i) < 0 $ for $i \in \{1,2\}$;
\item if $\tau(K_1)$ and $\tau(K_2)$ have the same sign, then $n_1 \neq 2\tau(K_1)$ or $n_2 \neq 2\tau(K_2)$.
\end{itemize}
Let $\{x_0, \ldots, x_{2k}\}$ and $\cup_{i=0}^{2k} \{y^i_1, \ldots, y^i_{\ell_i}\}$ be the bases for $\iota_0 \CFD(X_{K_1}^{[n_1]})$ and $\iota_1 \CFD(X_{K_1}^{[n_1]})$, respectively, described in Section \ref{sec:Lspace_knots}. Let $\{u_0, \ldots, u_{2m}\}$ and $\cup_{i=0}^{2m} \{v^i_1, \ldots, v^i_{h_i}\}$ be analogous bases for $\iota_0 \CFD(X_{K_2}^{[n_2]})$ and $\iota_1 \CFD(X_{K_2}^{[n_2]})$. We use bars to denote the corresponding type $A$ basis elements.

The $\Z_2$ grading on $\CFD(X_{K_2}^{[n_2]})$ can be computed by declaring that $gr(v^0_1) = 0$ and using Equations \eqref{eq:Z2_property1} and \eqref{eq:Z2_property2}. We find that all the generators in $\iota_1 \CFD(X_{K_2}^{[n_2]})$ have grading 0. Generators of $\iota_0 \CFD(X_{K_2}^{[n_2]})$ at the end of a horizontal or vertical chain (lower left corners) have grading 0, while those at the beginning of a horizontal or vertical chain (upper right corners) have grading 1. The computation of the $\Z_2$ grading of $\CFD(X_{K_1}^{[n_1]})$ is exactly the same, and to obtain the grading on $\CFA(X_{K_1}^{[n_1]})$ we simply switch the grading for generators with idempotent $\iota_0$.

We must prove that $Y(K_1^{[n_1]}, K_2^{[n_2]})$ is an $L$-space. Recall that
\begin{eqnarray*}
\CFhat(Y(K_1^{[n_1]}, K_2^{[n_2]}) ) &\cong& \CFA(X_{K_1}^{[n_1]}) \boxtimes \CFD(X_{K_2}^{[n_2]})\\
&\cong& \bigoplus_{\ell\in\{0,1\}} \iota_\ell \CFA(X_{K_1}^{[n_1]}) \boxtimes \iota_\ell \CFD(X_{K_2}^{[n_2]})
\end{eqnarray*}
All generators of $\iota_1 \CFA(X_{K_1}^{[n_1]})$ and $\iota_1 \CFD(X_{K_2}^{[n_2]})$ have grading 0, and thus all generators in the $\ell=1$ summand above have grading 0. We will show that all generators in the $\ell=0$ summand with grading 1 cancel in homology.

For simplicity, we assume that $\tau(K_1) > 0$ (if $\tau(K_1)<0$, the result follows by taking the mirror image of both knot complements). We consider the cases of $\tau(K_2) < 0$ and $\tau(K_2) > 0$ separately.

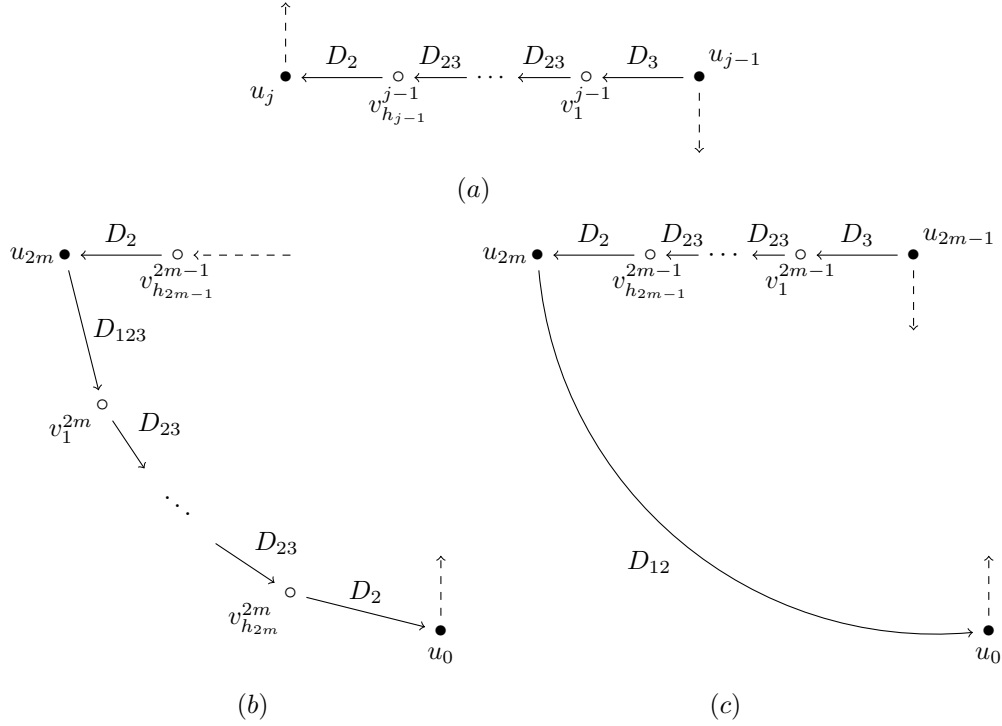
\begin{figure}
\begin{tikzpicture}
\footnotesize
\node (x2) at (0,0) {$\bullet$};
\node (y2) at (1.5,0) {$\circ$};
\node (y1) at (4,0) {$\circ$};
\node (x1) at (5.5,0) {$\bullet$};
\node (dots) at (2.75,0) {\scriptsize $\cdots$};

\draw[->] (y2) to node[above]{$D_{2}$} (x2);
\draw[->] (dots) to node[above]{$D_{23}$} (y2);
\draw[->] (y1) to node[above]{$D_{23}$} (dots);
\draw[->] (x1) to node[above]{$D_{3}$} (y1);
\draw[->, dashed] (x2) to (0,1);
\draw[->, dashed] (x1) to (5.5, -1);

\node[below left] at (x2) {$u_j$};
\node[below] at (y2) {$v^{j-1}_{h_{j-1}}$};
\node[below] at (y1) {$v^{j-1}_1$};
\node[above right] at (x1) {$u_{j-1}$};

\node at (2.5, -1.5) {$(a)$};
\end{tikzpicture}

\begin{tikzpicture}
\footnotesize
\node (x2) at (0,0) {$\bullet$};
\node (y2) at (-2,.5) {$\circ$};
\node (y1) at (-4.5,3) {$\circ$};
\node (x1) at (-5, 5) {$\bullet$};
\node (y0) at (-3.5, 5) {$\circ$};
\node (dots) at (-3.5,1.5) {$\begin{array}{c}\ddots\\ \\ \end{array}$};

\draw[->] (y2) to node[above]{$D_{2}$} (x2);
\draw[->] (dots) to node[above right]{$D_{23}$} (y2);
\draw[->] (y1) to node[above right]{$D_{23}$} (dots);
\draw[->] (x1) to node[right]{$D_{123}$} (y1);
\draw[->] (y0) to node[above]{$D_2$} (x1);
\draw[->, dashed] (x2) to (0,1);
\draw[->, dashed] (-2, 5) to (y0);

\node[below = 1 mm] at (x2) {$u_0$};
\node[below left] at (y2) {$v^{2m}_{h_{2m}}$};
\node[below left] at (y1) {$v^{2m}_1$};
\node[ left] at (x1) {$u_{2m}$};
\node[below] at (y0) {$v^{2m-1}_{h_{2m-1}}$};

\node at (-2.5, -1) {$(b)$};
\end{tikzpicture}
\begin{tikzpicture}
\footnotesize
\node (x2) at (1,0) {$\bullet$};
\node (x0) at (0,5) {$\bullet$};
\node (y1) at (-1.5,5) {$\circ$};
\node (y2) at (-3.5,5) {$\circ$};
\node (x1) at (-5, 5) {$\bullet$};
\node (dots) at (-2.5,5) {$\cdots$};

\draw[->] (y2) to node[above]{$D_{2}$} (x1);
\draw[->] (dots) to node[above]{$D_{23}$} (y2);
\draw[->] (y1) to node[above]{$D_{23}$} (dots);
\draw[->] (x0) to node[above]{$D_{3}$} (y1);
\draw[->, bend right = 45] (x1) to node[below left]{$D_{12}$} (x2);
\draw[->, dashed] (x2) to (1,1);
\draw[->, dashed] (x0) to (0,4);

\node[below = 1 mm] at (x2) {$u_0$};
\node[below] at (y2) {$v^{2m-1}_{h_{2m-1}}$};
\node[below] at (y1) {$v^{2m-1}_1$};
\node[ left] at (x1) {$u_{2m}$};
\node[above right] at (x0) {$u_{2m-1}$};

\node at (-2.5, -1) {$(c)$};
\end{tikzpicture}
\caption{The relevant portion of $\CFD(X_{K_2}^{[n_2]})$ near $u_j$ when $gr(u_j)=0$ if $(a)$ $j \neq 0$, $(b)$ $j=0$ and $n_2 > 2\tau(K_2)$, or $(c)$ $j=0$ and $n_2 = 2\tau(K_2)$.}
\label{fig:lower_left}
\end{figure}

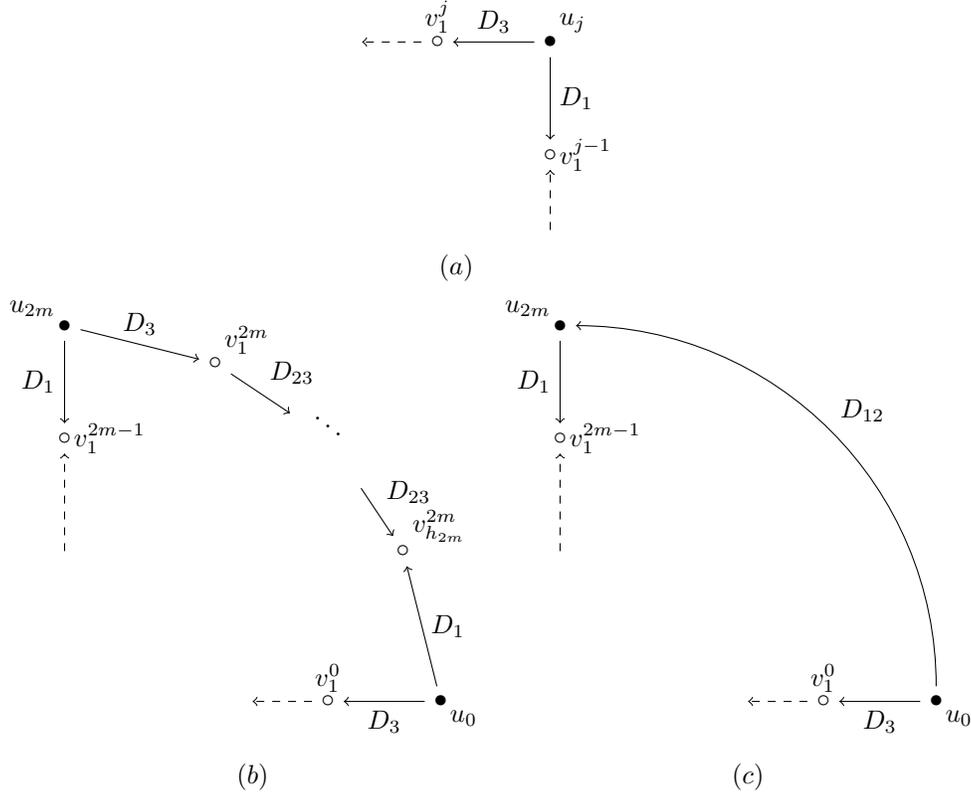
\begin{figure}
\begin{tikzpicture}
\footnotesize
\node (y2) at (0,-1.5) {$\circ$};
\node (y1) at (-1.5,0) {$\circ$};
\node (x1) at (0,0) {$\bullet$};

\draw[->] (x1) to node[above]{$D_{3}$} (y1);
\draw[->] (x1) to node[right]{$D_{1}$} (y2);
\draw[->, dashed] (y1) to (-2.5,0);
\draw[->, dashed] (0,-2.5) to (y2);

\node[right] at (y2) {$v^{j-1}_1$};
\node[above] at (y1) {$v^j_1$};
\node[above right] at (x1) {$u_j$};

\node at (-1.25, -3) {$(a)$};
\end{tikzpicture}

\begin{tikzpicture}
\footnotesize
\node (x2) at (0,0) {$\bullet$};
\node (y2) at (-.5,2) {$\circ$};
\node (y1) at (-3,4.5) {$\circ$};
\node (x1) at (-5, 5) {$\bullet$};
\node (y0) at (-5, 3.5) {$\circ$};
\node (y3) at (-1.5, 0) {$\circ$};

\node (dots) at (-1.5,3.5) {$\begin{array}{c}\ddots\\ \\ \end{array}$};

\draw[->] (x1) to node[above]{$D_{3}$} (y1);
\draw[->] (dots) to node[above right]{$D_{23}$} (y2);
\draw[->] (y1) to node[above right]{$D_{23}$} (dots);
\draw[->] (x2) to node[right]{$D_{1}$} (y2);
\draw[->] (x1) to node[left]{$D_1$} (y0);
\draw[->] (x2) to node[below]{$D_3$} (y3);

\draw[->, dashed] (y3) to (-2.5,0);
\draw[->, dashed] (-5, 2) to (y0);

\node[below right] at (x2) {$u_0$};
\node[above right] at (y2) {$v^{2m}_{h_{2m}}$};
\node[above right] at (y1) {$v^{2m}_1$};
\node[above left] at (x1) {$u_{2m}$};
\node[right] at (y0) {$v^{2m-1}_1$};
\node[above] at (y3) {$v^{0}_1$};

\node at (-2.5, -1) {$(b)$};
\end{tikzpicture}
\begin{tikzpicture}
\footnotesize
\node (x1) at (0,0) {$\bullet$};
\node (x2) at (-5, 5) {$\bullet$};
\node (y0) at (-5, 3.5) {$\circ$};
\node (y3) at (-1.5, 0) {$\circ$};

\draw[->, bend right = 45] (x1) to node[above right]{$D_{12}$} (x2);
\draw[->] (x2) to node[left]{$D_1$} (y0);
\draw[->] (x1) to node[below]{$D_3$} (y3);

\draw[->, dashed] (y3) to (-2.5,0);
\draw[->, dashed] (-5, 2) to (y0);

\node[below right] at (x1) {$u_0$};
\node[above left] at (x2) {$u_{2m}$};
\node[right] at (y0) {$v^{2m-1}_1$};
\node[above] at (y3) {$v^{0}_1$};

\node at (-2.5, -1) {$(c)$};
\end{tikzpicture}

\caption{The relevant portion of $\CFD(X_{K_2}^{[n_2]})$ near $u_j$ when $gr(u_j)=1$ if $(a)$ $j \neq 0$, $(b)$ $j=0$ and $n_2 < 2\tau(K_2)$, or $(c)$ $j=0$ and $n_2 = 2\tau(K_2)$.}
\label{fig:upper_right}
\end{figure}

\medbreak
{\bf Case 1: $\tau(K_2) < 0$. }  For $0 \le i \le 2k$, $gr(\bar{x}_i)$ is 1 if $i$ is even and 0 if $i$ is odd. For $0 \le j \le 2m$, $gr(u_j)$ is 1 if $j$ is even and 0 if $j$ is odd. So the generators in the tensor product that need to cancel in homology are $\bar{x}_i \otimes u_j$ where $i$ and $j$ have opposite parity. 

First suppose that $j$ is odd and $i$ is even. We can see in Figure \ref{fig:lower_left} that $u_j$ has an incoming $D_2$ coefficient map. More precisely, $D_2(v^{j-1}_{h_{j-1}}) = u_j$. Similarly $x_i$ has an incoming $D_2$ coefficient map unless $i=0$ and $n_1 = 2\tau(K_1)$. If $i$ is even and nonzero, then $D_2(y^{i-1}_{\ell_{i-1}}) = x_i$. According to the algorithm for computing $\CFA$ from $\CFD$, this means that $m_2(\bar{y}^{i-1}_{\ell_{i-1}}, \rho_2) = \bar{x}_i$. It is easy to check that there are no other coefficient maps into $u_j$ or $\Ainfty$ operations evaluating to $\bar{x}_i$. It follows that there is a differential from $\bar{y}^{i-1}_{\ell_{i-1}} \otimes v^{j-1}_{h_{j-1}}$ to $\bar{x}_i \otimes u_j$. If $i=0$ and $n_1 > 2\tau(K_1)$ then $D_2(y^{2k}_{\ell_{2k}}) = x_i$. It similarly follows that there is a differential from $\bar{y}^{2k}_{\ell_{2k}} \otimes v^{j-1}_{h_{j-1}}$ to $\bar{x}_i \otimes u_j$. If $i=0$ and $n_1 = 2\tau(K_1)$ then $x_i$ does not have an incoming $D_2$ coefficient map. However, in that case we have the incoming coefficient maps
$$D_{12} (x_{2k}) = x_0 \qquad \text{ and } \qquad D_{12}\circ D_2 (y^{2k-1}_{\ell_{2k-1}}) = x_0.$$
$\CFA(X_{K_1}^{[n]})$ has the corresponding $\Ainfty$ operations
$$ m_3(\bar{x}_{2k}, \rho_3, \rho_2) = \bar{x}_0 \qquad \text{ and } \qquad m_3(\bar{y}^{2k-1}_{\ell_{2k-1}}, \rho_{23}, \rho_2) = \bar{x}_0.$$
It follows that there is a differential to $\bar{x}_0 \otimes u_j$ from $\bar{x}_{2k} \otimes u_{j-1}$ if $h_{j-1} = 1$ or from $\bar{y}^{2k-1}_{\ell_{2k-1}} \otimes v^{j-1}_{h_{j-1}-1}$ if $h_{j-1} > 1$.

\begin{table}
\begin{tabular}{ccl}
\toprule
$i$, $j$ & canceling generator & \\
\midrule
$i>0$ even, $j$ odd & $\bar{y}^{i-1}_{\ell_{i-1}} \otimes v^{j-1}_{h_{j-1}}$ & \\
\midrule
 & $\bar{y}^{2k}_{\ell_{2k}} \otimes v^{j-1}_{h_{j-1}}$ & if $n_1 > 2\tau(K_1) $\\
$i=0$, $j$ odd & $\bar{x}_{2k} \otimes u_{j-1}$ & if $n_1 = 2\tau(K_1)$ and $h_{j-1} = 1$\\
 & $\bar{y}^{2k-1}_{\ell_{2k-1}} \otimes v^{j-1}_{h_{j-1}-1}$ & if $n_1 = 2\tau(K_1)$ and $h_{j-1} > 1$\\
\midrule
$i$ odd, $j>0$ even & $\bar{y}^{i}_1 \otimes v^{j-1}_1$ & \\
\midrule
$i$ odd, $j=0$ & $\bar{y}^{i-1}_1 \otimes v^{0}_1$ & \\
\bottomrule

\end{tabular}
\caption{Generators of $\CFA(X_{K_1}^{[n_1]}) \boxtimes \CFD(X_{K_2}^{[n_2]})$ which cancel in homology with $\bar{x}_i \otimes u_j$ (there is a differential to the canceling generator from $\bar{x}_i\otimes u_j$). We assume that $\tau(K_1)>0$ and $\tau(K_2)<0$. }
\label{table:canceling_generators1}

\end{table}

Now suppose that $j$ is even and $i$ is odd. We can see from Figure \ref{fig:upper_right} that $x_i$ has two outgoing coefficient maps
$$ D_1(x_i) = y^{i-1}_1 \qquad \text{ and } \qquad D_3(x_i) = y^i_1,$$
so $\bar{x}_i$ has the outgoing $\Ainfty$ operations
$$ m_2(\bar{x}_i, \rho_3) = \bar{y}^{i-1}_1 \qquad \text{ and } \qquad m_2(\bar{x}_i, \rho_1) = \bar{y}^i_1.$$
If $j = 0$ then $D_3(u_j) = v^0_1$; it follows that there is a differential from $\bar{x}_i \otimes u_j$ to $\bar{y}^{i-1}_1 \otimes v^0_1$. If $j > 0$ then $D_1(u_j) = v^{i-1}_1$ and there is a differential from $\bar{x}_i \otimes u_j$ to $\bar{y}^{i}_1 \otimes v^{i-1}_1$.

We have shown that each $\bar{x}_i\otimes u_j$ with grading 1 can be canceled with another generator in homology, as summarized in Table \ref{table:canceling_generators1}. Using the table, it is straightforward to check that all these generators can be canceled at once, that is, that none of the canceling generators are used twice. Therefore all surviving generators in $\HFhat(Y(K_1^{[n_1]}, K_2^{[n_2]}))$ have $\Z_2$ grading 0 and $Y(K_1^{[n_1]}, K_2^{[n_2]})$ is an $L$-space.

\bigbreak
{\bf Case 2: $\tau(K_2)>0$. } 
For $0 \le i \le 2k$, $gr(\bar{x}_i)$ is 1 if $i$ is even and 0 if $i$ is odd. For $0 \le j \le 2m$, $gr(u_j)$ is 0 if $j$ is even and 1 if $j$ is odd. So the generators in the tensor product that need to cancel in homology are $\bar{x}_i \otimes u_j$ where $i$ and $j$ have the same parity. 

First suppose that $i$ and $j$ are both odd. We can see from Figure \ref{fig:upper_right} that $D_1(x_i) = y^{i-1}_1$, and thus $m_2(\bar{x}_i, \rho_3) = \bar{y}^{i-1}_1$. We also see that $D_3(u_j) = v^j_1$. It follows that there is a differential in the box tensor product from $\bar{x}_i \otimes u_j$ to $\bar{y}^{i-1}_1 \otimes v^j_1$.

Now suppose that $i$ and $j$ are both even. Table \ref{table:lower_left_typeD} lists several incoming chains of coefficient maps at $u_j$, depending on $j$ and $n_2$ (see also Figure \ref{fig:lower_left}). There are similar chains of coefficient maps ending in $x_i$, and Table \ref{table:lower_left_typeA} contains the corresponding $\Ainfty$ operations which evaluate to $\bar{x}_i$.

We can find an $\Ainfty$ operation in Table \ref{table:lower_left_typeA} that pairs with a sequence of coefficient maps in Table \ref{table:lower_left_typeD} for any combination of $i$, $j$, $n_1$, and $n_2$ unless $i=j=0$, $n_1 = 2\tau(K_1)$ and $n_2 = 2\tau(K_2)$, but this case is excluded by assumption since $\tau(K_1)$ and $\tau(K_2)$ are both positive. For example, if $i>0$ and $j>0$ the operation $m_2(\bar{y}^{i-1}_{\ell_{i-1}}, \rho_2) = \bar{x}_i$ pairs with the nontrivial coefficient map $D_2(v^{j-1}_{h_{j-1}}) = u_j$ to produce a differential in the box tensor product form $\bar{y}^{i-1}_{\ell_{i-1}} \otimes v^{j-1}_{h_{j-1}}$ to $\bar{x}_i \otimes u_j$. If $i = 0$, $n_1 = 2\tau(K_1)$, $j>0$, and $h_{j-1} = 1$ then there are operations which pair in the tensor product to produce a differential from $\bar{x}_{2k} \otimes u_{j-1}$ to $\bar{x}_i \otimes u_j$.

The canceling generator for each combination is listed in Table \ref{table:canceling_generators2}. We can check that no canceling differentials are used twice, so all of these generators may be cancelled when taking homology. Since all surviving generators of $\HFhat(Y(K_1^{[n_1]}, K_2^{[n_2]}))$ have $\Z_2$ grading 0, $Y(K_1^{[n_1]}, K_2^{[n_2]})$ is an $L$-space.

\begin{center}
\begin{table}
\begin{tabular}{c}
\toprule
$j \neq 0$\\
\midrule
$\begin{array}{rcl}
D_2( v^{j-1}_{h_{j-1}} ) &=& u_j \\
D_2\circ D_3( x_{j-1}) &=& u_j \quad \text{if }h_{j-1} = 1\\
D_2\circ D_{23}( v^{j-1}_{h_{j-1}-1} ) &=& u_j \quad \text{if }h_{j-1} > 1
\end{array}$\\
\midrule
$j=0$ and $n_2 > 2\tau(K_2)$\\
\midrule
$\begin{array}{rcl}
D_2( v^{2m}_{h_{2m}} ) &=& u_j\\
D_2\circ D_{123}( u_{2m}) &=& u_j \quad \text{if }h_{2m} = 1\\
D_2\circ D_{123}\circ D_2( v^{2m-1}_{h_{2m-1}} ) &=& u_j \quad \text{if }h_{2m} = 1\\
D_2\circ D_{23}( v^{2m}_{h_{2m}-1} ) &=& u_j \quad \text{if }h_{2m} > 1
\end{array}$\\
\midrule
$j=0$ and $n_2 = 2\tau(K_2)$\\
\midrule
$\begin{array}{rcl}
D_{12}( u_{2m}) &=& u_j\\
D_{12}\circ D_2( v^{2m-1}_{h_{2m-1}} ) &=& u_j \\
D_{12}\circ D_2 \circ D_3( u_{2m-1} ) &=& u_j \quad \text{if }h_{2m-1} = 1\\
D_{12}\circ D_2\circ D_{23}( v^{2m-1}_{h_{2m-1}-1} ) &=& u_j \quad \text{if }h_{2m-1} > 1
\end{array}$\\
\midrule
\end{tabular}
\caption{Some chains of coefficient maps ending in $u_j$ for $j$ even and $\tau(K_2) > 0$.}
\label{table:lower_left_typeD}
\end{table}
\end{center}

\begin{table}
\begin{tabular}{c}
\toprule
$i \neq 0$\\
\midrule
$\begin{array}{rcl}
m_2( \bar{y}^{i-1}_{\ell_{i-1}} , \rho_2 ) &=& \bar{x}_i \\
m_2( \bar{x}_{i-1}, \rho_{12}) &=& \bar{x}_i \quad \text{if }\ell_{i-1} = 1\\
m_3( \bar{y}^{i-1}_{\ell_{i-1}-1} , \rho_2, \rho_{12}) &=& \bar{x}_i \quad \text{if }\ell_{i-1} > 1
\end{array}$\\
\midrule
$i=0$ and $n_1 > 2\tau(K_1)$\\
\midrule
$\begin{array}{rcl}
m_2( \bar{y}^{2k}_{\ell_{2k}}, \rho_2 ) &=& \bar{x}_i\\
m_4( \bar{x}_{2k}, \rho_3, \rho_2, \rho_{12}) &=& \bar{x}_i \quad \text{if }\ell_{2k} = 1\\
m_4( \bar{y}^{2k-1}_{\ell_{2k-1}}, \rho_{23}, \rho_2, \rho_{12}) &=& \bar{x}_i \quad \text{if }\ell_{2k} = 1\\
m_3( \bar{y}^{2k}_{\ell_{2k}-1}, \rho_2, \rho_{12} ) &=& \bar{x}_i \quad \text{if }\ell_{2k} > 1
\end{array}$\\
\midrule
$i=0$ and $n_1 = 2\tau(K_1)$\\
\midrule
$\begin{array}{rcl}
m_3( \bar{x}_{2k}, \rho_3, \rho_2) &=& \bar{x}_i\\
m_3( \bar{y}^{2k-1}_{\ell_{2k-1}}, \rho_{23}, \rho_2 ) &=& \bar{x}_i \\
m_3( \bar{x}_{2k-1}, \rho_{123}, \rho_2 ) &=& \bar{x}_i \quad \text{if }\ell_{2k-1} = 1\\
m_4( \bar{y}^{2k-1}_{\ell_{2k-1}-1}, \rho_2, \rho_{123}, \rho_2 ) &=& \bar{x}_i \quad \text{if }\ell_{2k-1} > 1
\end{array}$\\
\midrule
\end{tabular}
\caption{Some $\Ainfty$ operations evaluating to $\bar{x}_i$ for $i$ even and $\tau(K_1)>0$.}
\label{table:lower_left_typeA}
\end{table}

\begin{table}
\begin{tabular}{ccl}

$i$, $j$ & canceling generator & \\
\toprule
$i$ odd, $j$ odd & $\bar{y}^{i-1}_1 \otimes v^j_1$ & \\
\midrule
$i>0$ even, $j>0$ even & $\bar{y}^{i-1}_{\ell_{i-1}} \otimes v^{j-1}_{h_{j-1}}$ & \\
\midrule
 			& $\bar{y}^{i-1}_{\ell_{i-1}} \otimes v^{2m}_{h_{2m}}$ & if $n_2 > 2\tau(K_2) $\\
$i>0$ even, $j=0$ & $\bar{x}_{i-1} \otimes u_{2m}$ & if $n_2 = 2\tau(K_2)$ and $\ell_{i-1} = 1$\\
 			& $\bar{y}^{i-1}_{\ell_{i-1}-1} \otimes v^{2m-1}_{h_{2m-1}}$ & if $n_2 = 2\tau(K_2)$ and $\ell_{i-1} > 1$\\
\midrule
 			& $\bar{y}^{2k}_{\ell_{2k}} \otimes v^{j-1}_{h_{j-1}}$ & if $n_1 > 2\tau(K_1) $\\
$i=0$, $j>0$ even & $\bar{x}_{2k} \otimes u_{j-1}$ & if $n_1 = 2\tau(K_1)$ and $h_{j-1} = 1$\\
 			& $\bar{y}^{2k-1}_{\ell_{2k-1}} \otimes v^{j-1}_{h_{j-1}-1}$ & if $n_1 = 2\tau(K_1)$ and $h_{j-1} > 1$\\
\midrule
 			& $\bar{y}^{2k}_{\ell_{2k}} \otimes v^{2m}_{h_{2m}}$ & if $n_1 > 2\tau(K_1) $ and $n_2 > 2\tau(K_2)$\\
		 	& $\bar{y}^{2k-1}_{\ell_{2k-1}} \otimes v^{2m}_{h_{2m}-1}$ & if $n_1 = 2\tau(K_1)$ and $h_{2m} > 1$\\
 			& $\bar{x}_{2k-1} \otimes u_{2m}$ & if $n_1 = 2\tau(K_1)$, $h_{2m} = 1$, and $\ell_{2k-1} = 1$\\
$i=0$, $j=0$	& $\bar{y}^{2k-1}_{\ell_{2k-1}-1} \otimes v^{2m-1}_{h_{2m-1}}$ & if $n_1 = 2\tau(K_1)$, $h_{2m} = 1$, and $\ell_{2k-1} > 1$\\
			
			& $\bar{y}^{2k}_{\ell_{2k}-1} \otimes v^{2m-1}_{h_{2m-1}}$ & if $n_2 = 2\tau(K_2)$ and $\ell_{2k} > 1$\\
 			& $\bar{x}_{2k} \otimes u_{2m-1}$ & if $n_2 = 2\tau(K_2)$, $\ell_{2k} = 1$, and $h_{2m-1} = 1$\\
			& $\bar{y}^{2k-1}_{\ell_{2k-1}} \otimes v^{2m-1}_{h_{2m-1}-1}$ & if $n_2 = 2\tau(K_2)$, $\ell_{2k} = 1$, and $h_{2m-1} > 1$\\
\midrule

\end{tabular}
\caption{Generators of $\CFA(X_{K_1}^{[n_1]}) \boxtimes \CFD(X_{K_2}^{[n_2]})$ which cancel in homology with $\bar{x}_i \otimes u_j$ (there is a differential from the canceling generator to $\bar{x}_i\otimes u_j$). We assume that $\tau(K_1)>0$ and $\tau(K_2)>0$.}
\label{table:canceling_generators2}

\end{table}

\section{Future directions}

Having addressed splicing of integer framed knot complements, it is natural to ask if Theorem \ref{main_theorem} can be extended to include rational framings. Equivalently, we ask the following:
\begin{question}
\label{question}
When is a manifold produced by gluing together two knot complements using any gluing map an $L$-space?
\end{question}
The challenge in extending the proof of Theorem \ref{main_theorem} to answer Question \ref{question} is the complexity of $\CFD$ of the knot complements. For integer framings, we can easily produce a bordered invariant from $CFK^-$ and the impact of changing the framing is minimal, but the case of rational framing is less well understood. The techniques used in this paper may be valuable in answering Question \ref{question}, but we would first need a sufficiently simple description of $\CFD$ of a rationally framed knot complement.

In the meantime, we can guess an answer to Question 1 by viewing the problem in a broader context. The following conjecture is  motivated by recent work of Boyer and Clay concerning graph manifolds \cite{BoyerClay}. An important ingredient is the twisted $I$-bundle over the torus, denoted $N_2$. For a manifold $M$ with torus boundary, an $N_2$-filling of $M$ along a curve $\gamma$ in $\partial M$ will mean a manifold obtained by gluing $N_2$ to $M$ so that the rational longitude of $N_2$ is identified with $\gamma$. The results in \cite{BoyerClay} conjecturally imply that gluing together two graph manifolds along their common torus boundary produces an $L$-space if and only if there is some rational curve $\gamma$ on the boundary torus such that $N_2$-filling either manifold along $\gamma$ produces an $L$-space.

We can speculate that this principle extends beyond graph manifolds, and perhaps that it applies to gluing knot complements. This idea motivates the following conjecture:

\begin{conj}
\label{conjecture}
For $i \in \{1,2\}$, let $K_i$ be a nontrivial knot in an $L$-space homology sphere $Y_i$ with meridian $\mu_i$ and Seifert longitude $\lambda_i$. If $\tau(K_1) > 0$ let $t = 2\tau(K_1)-1$ and if $\tau(K_1)<0$ let $t = 2\tau(K_1)+1$. Let $Y$ be the manifold obtained by gluing the exterior of $K_1$ to the exterior of $K_2$ such that
\begin{eqnarray*}
\mu_1 & \text{ is identified with } & p \mu_2 + q \lambda_2 \quad \text{and} \\
\lambda_1 + t \mu_1& \text{ is identified with } &  r \mu_2 + s \lambda_2
\end{eqnarray*}
Then $Y$ is an $L$-space if and only if all of the following hold:
\begin{itemize}
\item $K_1$ and $K_2$ are $L$-space knots;
\item If $\tau(K_1)>0$ then $ \frac{p}{q}> \frac{r}{s}$; if $\tau(K_1)<0$ then $\frac{p}{q} < \frac{r}{s}$;
\item If $\tau(K_2)>0$ then $\frac{p}{q}, \frac{r}{s} \in  (2\tau(K_2)-1, \infty)$; if $\tau(K_2)<0$ then $\frac{p}{q}, \frac{r}{s} \in (-\infty, 2\tau(K_2)+1)$.
\end{itemize}
\end{conj}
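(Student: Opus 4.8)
The plan is to extend the bordered-Floer argument of Section \ref{sec:main_proof} by allowing the parametrization of $\partial X_{K_2}$ to be arbitrary. The gluing data, a matrix $\bigl(\begin{smallmatrix} p & q \\ r & s \end{smallmatrix}\bigr)$ with $ps-qr = \pm 1$, determines a self-homeomorphism $\psi$ of the torus relating the parametrization of $\partial X_{K_2}$ induced by the gluing to the $0$-framed one, and the pairing theorem, applied after inserting the mapping-class bimodule of $\psi$, gives
\[
\CFhat(Y) \;\cong\; \CFA(X_{K_1}^{[t]}) \boxtimes \widehat{\mathit{CFDA}}(\psi) \boxtimes \CFD(X_{K_2}^{[0]}),
\]
where one computes $\widehat{\mathit{CFDA}}(\psi)$ as a box tensor product of the small, explicit bimodules of the Dehn-twist generators \cite{LOT:Bordered}. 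Measuring the $K_1$-side gluing against the longitude $\lambda_1 + t\mu_1$ with $t = 2\tau(K_1) \mp 1$ is what makes this usable: $t$ is an integer, so $\CFD(X_{K_1}^{[t]})$, and hence $\CFA(X_{K_1}^{[t]})$, is still given by the theorem recalled in Section \ref{sec:Lspace_knots}, and the only new ingredient is the factor $\widehat{\mathit{CFDA}}(\psi)$. By Proposition \ref{prop:Z2_grading_Lspace}, $Y$ is an $L$-space if and only if every generator of this triple box tensor product that survives in homology carries the same relative $\Z_2$ grading, and by Equations \eqref{eq:Z2_property1}--\eqref{eq:Z2_property2} the $\Z_2$ grading is additive under $\boxtimes$ once one records the grading that $\widehat{\mathit{CFDA}}(\psi)$ assigns to each of its generators. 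As a sanity check, specializing the gluing to an integer framing $\mu_1 \mapsto \lambda_2 + n_2 \mu_2$, $\lambda_1^{[n_1]} \mapsto \mu_2$ yields $p/q = n_2$ and $r/s = n_2 - 1/(n_1 - 2\tau(K_1) + 1)$, and the three bullets of Conjecture \ref{conjecture} then reduce exactly to the three bullets of Theorem \ref{main_theorem}.

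For the \emph{only if} direction I would first handle the case that $K_1$, say, is not an $L$-space knot. Here the durable-generator machinery of Sections \ref{sec:durable_generators}--\ref{sec:boundary_case} should carry over with almost no change: $\CFD(X_{K_1}^{[t]})$ contains a pair of durable generators $\x_1$ and $\y_1 = D_{123}(\x_1)$ lying away from the unstable chain, the conditions in Definition \ref{def:durable_generators} are precisely what makes a box tensor product survive in homology, and the work is to verify that tensoring $\CFD(X_{K_2}^{[0]})$ with $\widehat{\mathit{CFDA}}(\psi)$ still produces a pair of weakly durable generators joined by $D_{123}$ for every $\psi$ --- plausible because weak durability is a local condition and the Dehn-twist bimodules are simple. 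Proposition \ref{prop:durable_tensor_durable} then yields two surviving generators of opposite $\Z_2$ grading. When both $K_i$ are $L$-space knots, violating any slope bullet should produce such a pair as well: using the staircase form of both $\CFD$'s together with the explicit shape of $\widehat{\mathit{CFDA}}(\psi)$, one exhibits two surviving generators of opposite grading, in the style of the boundary-case computation in Section \ref{sec:boundary_case}. The conceptual content is that $(2\tau(K_2) - 1, \infty)$ is, up to endpoints, the set of slopes along which Dehn filling $X_{K_2}$ produces an $L$-space, while $p/q > r/s$ (for $\tau(K_1) > 0$) is a nesting condition on the two images; outside these ranges the relevant curves are forced to cross with both signs.

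For the \emph{if} direction one must control all of $\HFhat(Y)$, as in Section \ref{sec:if_direction}. With $K_1$ and $K_2$ $L$-space knots and the slope bullets in force, I would compute the $\Z_2$ gradings on $\CFD(X_{K_1}^{[t]})$ and $\CFD(X_{K_2}^{[0]})$ exactly as there (all $\iota_1$-generators in grading $0$; an $\iota_0$-generator in grading $0$ or $1$ according to whether it sits at the foot or the head of a chain), read off the grading that $\widehat{\mathit{CFDA}}(\psi)$ puts on each of its generators, and then show by an explicit cancellation argument --- organized into a table of unique canceling partners, as in Tables \ref{table:canceling_generators1}--\ref{table:canceling_generators2} --- that every generator of the triple tensor product carrying the wrong $\Z_2$ grading dies in homology. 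The slope hypotheses should enter exactly where the caveat ``$n_1 \neq 2\tau(K_1)$ or $n_2 \neq 2\tau(K_2)$'' entered in Theorem \ref{main_theorem}: they ensure that the single potentially unpaired generator --- the one coming from $x_0$, $u_0$, and $\psi$ all at their most degenerate --- always has a partner. The cleanest packaging of the whole argument is via the immersed-multicurve invariant of a knot complement: $\CFD$ of $X_{K_i}$ is recorded by an immersed curve $\gamma_i$ in the punctured torus, $\text{rk}\,\HFhat(Y)$ equals the geometric intersection number of $\gamma_1$ with the image of $\gamma_2$ under the gluing, $|H_1(Y)|$ equals the algebraic intersection number, and $L$-spaceness says all intersection points have the same sign --- so the \emph{if} direction becomes the easy statement that two staircase curves in the prescribed slope ranges can be isotoped to meet all-same-sign, and the \emph{only if} direction becomes a classification of which multicurves can meet an arbitrary curve all-same-sign.

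The main obstacle is the one flagged in the introduction: there is no uniformly simple closed form for $\CFD$ of a rationally parametrized knot complement, so the cancellation bookkeeping in the \emph{if} direction and the search for opposite-grading pairs in the \emph{only if} direction must be carried out uniformly over the infinitely many mapping classes $\psi$, equivalently over all reduced words in the Dehn-twist bimodules. The immersed-curve reformulation makes this feasible in principle, but turning it into a proof requires a clean classification of the immersed multicurves that can have all-same-sign geometric intersection with every simple closed curve --- this is where the non-$L$-space case and the precise value $2\tau - 1$ get pinned down --- together with enough control of the sign data to verify the $\Z_2$-grading statement and to match the resulting slope intervals with the $N_2$-filling heuristic of \cite{BoyerClay} that motivates the conjecture. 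Establishing that classification, with the signs attached, is the crux of the program.
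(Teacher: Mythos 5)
There is no proof of this statement in the paper for your proposal to be measured against: the statement is Conjecture \ref{conjecture}, placed in the ``Future directions'' section precisely because the author could not prove it. The only thing the paper establishes about it is a consistency check --- rewriting the splicing identifications of Theorem \ref{main_theorem} in the $(p,q,r,s)$ coordinates to verify that the conjectural slope conditions specialize to the integer-framing conditions --- and your sanity-check computation ($p/q = n_2$, $r/s = n_2 - 1/(n_1 - 2\tau(K_1)+1)$, i.e.\ $n_2 + 1/(t-n_1)$) reproduces exactly that check and nothing more. The paper itself flags the obstruction you flag: there is no sufficiently simple description of $\CFD$ of a rationally framed knot complement, so the methods of Section \ref{sec:main_proof} do not extend as stated.

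Your proposal, then, is a research program rather than a proof, and by your own admission its crux is unestablished. Concretely: (1) you assert that inserting the mapping-class bimodule $\widehat{\mathit{CFDA}}(\psi)$ and tensoring with $\CFD(X_{K_2}^{[0]})$ ``should'' still yield a pair of weakly durable generators joined by $D_{123}$, but weak durability is a condition on the full pattern of coefficient maps of the composite module, and after tensoring with an arbitrary word in Dehn-twist bimodules the chains of Section \ref{sec:Lspace_knots} are destroyed --- this is exactly the step that needs a new structural result, not a locality heuristic; (2) the \emph{if} direction requires a cancellation scheme uniform over infinitely many mapping classes $\psi$, and no analogue of Tables \ref{table:canceling_generators1}--\ref{table:canceling_generators2} is produced; (3) the immersed-multicurve reformulation you invoke (rank of $\HFhat$ as geometric intersection number, $L$-space-ness as all-same-sign intersection) is not available in this paper and is itself a substantial theorem, and even granting it you still need the classification of multicurves meeting every simple closed curve with one sign, which you explicitly leave open --- yet that is where the thresholds $2\tau(K_i)\mp 1$ and the non-$L$-space-knot case are actually decided. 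So the gap is not a local misstep but the absence of the argument's core; what you have written is a plausible roadmap (and the $\Z_2$-grading bookkeeping via Equations \eqref{eq:Z2_property1}--\eqref{eq:Z2_property2} and Proposition \ref{prop:Z2_grading_Lspace} is the right currency), but it does not prove the conjecture, and neither does the paper.
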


We conclude by noting that Theorem \ref{main_theorem} is consistent with this conjecture. When we splice $X_{K_1}^{[n_2]}$ with $X_{K_2}^{[n_2]}$, we have the following identifications:
\begin{eqnarray*}
\mu_1 & \longleftrightarrow & n_2 \mu_2 + \lambda_2 \\
\lambda_1 + n_1 \mu_1& \longleftrightarrow &  \mu_2
\end{eqnarray*}
Adding $(t-n_1)$ copies of the first line to the second tells us that
$$\lambda_1 + t \mu_1  \longleftrightarrow   ((t-n_1)n_2 + 1)\mu_2  +  (t-n_1) \lambda_2.$$
In the notation of Conjecture \ref{conjecture}, we have
$$ \frac{p}{q} = n_2 \quad\text{ and }\quad \frac{r}{s} = n_2 + \frac{1}{t-n_1} .$$
The conditions on $p$, $q$, $r$, and $s$ in Conjecture \ref{conjecture} imply the conditions on $n_1$ and $n_2$ in Theorem \ref{main_theorem}.

\bibliography{HFbibliography}

\providecommand{\bysame}{\leavevmode\hbox to3em{\hrulefill}\thinspace}
\providecommand{\MR}{\relax\ifhmode\unskip\space\fi MR }
\providecommand{\MRhref}[2]{%
  \href{http://www.ams.org/mathscinet-getitem?mr=#1}{#2}
}
\providecommand{\href}[2]{#2}
\begin{thebibliography}{10}

\bibitem{BoyerClay}
Steven Boyer and Adam Clay, \emph{Foliations, orders, representations,
  {L}-spaces and graph manifolds}, 2014.

\bibitem{BoyerGordonWatson}
Steven Boyer, Cameron~McA. Gordon, and Liam Watson, \emph{On {L}-spaces and
  left-orderable fundamental groups}, Math. Ann. \textbf{356} (2013), no.~4,
  1213--1245. \MR{3072799}

\bibitem{splicing}
Matthew Hedden and Adam~Simon Levine, \emph{Splicing knot complements and
  bordered {F}loer homology}, 2012.

\bibitem{LOT:Bordered}
Robert Lipshitz, Peter~S. Ozsv{\'a}th, and Dylan~P. Thurston, \emph{Bordered
  {H}eegaard {F}loer homology: {I}nvariance and pairing}, 2008.

\bibitem{LiscaStipsicz}
Paolo Lisca and Andr{\'a}s~I. Stipsicz, \emph{On the existence of tight contact
  structures on {S}eifert fibered 3-manifolds}, Duke Math. J. \textbf{148}
  (2009), no.~2, 175--209. \MR{2524494 (2010c:53127)}

\bibitem{OzSz:properties}
Peter Ozsv{\'a}th and Zolt{\'a}n Szab{\'o}, \emph{Holomorphic disks and
  three-manifold invariants: properties and applications}, Ann. of Math. (2)
  \textbf{159} (2004), no.~3, 1159--1245. \MR{2113020 (2006b:57017)}

\bibitem{OzSz:lens_surgeries}
\bysame, \emph{On knot {F}loer homology and lens space surgeries}, Topology
  \textbf{44} (2005), no.~6, 1281--1300. \MR{2168576 (2006f:57034)}

\bibitem{OzSz:branched_double_covers}
\bysame, \emph{On the {H}eegaard {F}loer homology of branched double-covers},
  Adv. Math. \textbf{194} (2005), no.~1, 1--33. \MR{2141852 (2006e:57041)}

\bibitem{Petkova:relative_grading}
Ina Petkova, \emph{The decategorification of bordered {H}eegaard {F}loer
  homology}, 2012.

\bibitem{Petkova:absolute_grading}
\bysame, \emph{An absolute {Z}/2 grading on bordered {H}eegaard {F}loer
  homology}, 2014.

\end{thebibliography}
\bibliographystyle{amsplain}

\end{document}